\newcommand{\nc}{\newcommand}
\nc{\fg}{\mathfrak{f}}  \nc{\vg}{\mathfrak{v}} \nc{\wg}{\mathfrak{w}} \nc{\zg}{\mathfrak{z}} \nc{\ngo}{\mathfrak{n}} \nc{\kg}{\mathfrak{k}} \nc{\mg}{\mathfrak{m}} \nc{\bg}{\mathfrak{b}} \nc{\ggo}{\mathfrak{g}} \nc{\ggob}{\overline{\mathfrak{g}}} \nc{\sog}{\mathfrak{so}} \nc{\sug}{\mathfrak{su}} \nc{\spg}{\mathfrak{sp}} \nc{\slg}{\mathfrak{sl}} \nc{\glg}{\mathfrak{gl}} \nc{\cg}{\mathfrak{c}} \nc{\rg}{\mathfrak{r}}  \nc{\hg}{\mathfrak{h}} \nc{\tgo}{\mathfrak{t}} \nc{\ug}{\mathfrak{u}} \nc{\dg}{\mathfrak{d}} \nc{\ag}{\mathfrak{a}} \nc{\pg}{\mathfrak{p}} \nc{\sg}{\mathfrak{s}} \nc{\affg}{\mathfrak{aff}} \nc{\qg}{\mathfrak{q}}
\nc{\Xg}{\mathfrak{X}} \nc{\lgo}{\mathfrak{l}}
\nc{\pca}{\mathcal{P}} \nc{\nca}{\mathcal{N}} \nc{\lca}{\mathcal{L}} \nc{\oca}{\mathcal{O}} \nc{\mca}{\mathcal{M}} \nc{\tca}{\mathcal{T}} \nc{\aca}{\mathcal{A}} \nc{\cca}{\mathcal{C}} \nc{\gca}{\mathcal{G}} \nc{\sca}{\mathcal{S}} \nc{\hca}{\mathcal{H}} \nc{\bca}{\mathcal{B}} \nc{\dca}{\mathcal{D}}
\nc{\vp}{\varphi} \nc{\ddt}{\tfrac{{\rm d}}{{\rm d}t}} \nc{\dds}{\tfrac{{\rm d}}{{\rm d}s}} \nc{\ddtbig}{\frac{{\rm d}}{{\rm d}t}} \nc{\dd}{{\rm d}}
\nc{\dpar}{\tfrac{\partial}{\partial t}} \nc{\im}{\mathtt{i}}
\nc{\SO}{\mathrm{SO}} \nc{\Spe}{\mathrm{Sp}} \nc{\Sl}{\mathrm{SL}}
\nc{\SU}{\mathrm{SU}} \nc{\Or}{\mathrm{O}} \nc{\U}{\mathrm{U}} \nc{\Gl}{\mathrm{GL}}
\nc{\Se}{\mathrm{S}} \nc{\Cl}{\mathrm{Cl}} \nc{\Spein}{\mathrm{Spin}}
\nc{\Pin}{\mathrm{Pin}} \nc{\G}{\mathrm{GL}_n(\RR)} \nc{\g}{\mathfrak{gl}_n(\RR)}
\nc{\RR}{{\mathbb R}} \nc{\HH}{{\mathbb H}} \nc{\CC}{{\mathbb C}} \nc{\ZZ}{{\mathbb Z}}
\nc{\FF}{{\mathbb F}} \nc{\NN}{{\mathbb N}} \nc{\QQ}{{\mathbb Q}} \nc{\PP}{{\mathbb P}}
\nc{\vs}{\vspace{.2cm}} \nc{\vsp}{\vspace{1cm}} \nc{\ip}{{\langle\cdot,\cdot\rangle}}
\nc{\ipp}{(\cdot,\cdot)} \nc{\la}{\langle} \nc{\ra}{\rangle} \nc{\unm}{\tfrac{1}{2}}
\nc{\unc}{\tfrac{1}{4}} \nc{\und}{\tfrac{1}{16}} \nc{\no}{\vs\noindent}
\nc{\lam}{\Lambda^2(\RR^n)^*\otimes\RR^n} \nc{\tangz}{{\rm T}^{\rm Zar}}
\nc{\lamg}{\Lambda^2\ggo^*\otimes\ggo}
\nc{\nor}{{\sf n}}  \nc{\mum}{/\!\!/} \nc{\kir}{/\!\!/\!\!/}
\nc{\Ri}{\tfrac{4\Ric_{\mu}}{||\mu||^2}} \nc{\ds}{\displaystyle}
\nc{\ben}{\begin{enumerate}} \nc{\een}{\end{enumerate}} \nc{\f}{\frac}
\nc{\lb}{[\cdot,\cdot]} \nc{\isn}{\tfrac{1}{||v||^2}}
\nc{\gkp}{(\ggo=\kg\oplus\pg,\ip)} \nc{\ukh}{(\ug=\kg\oplus\hg,\ip)}
\nc{\tgkp}{(\tilde{\ggo}=\kg\oplus\pg,\ip)}
\nc{\wt}{\widetilde}
\nc{\raw}{\rightarrow} \nc{\lraw}{\longrightarrow} \nc{\hqn}{\mathcal{H}_{q,n}}
\nc{\minimatrix}[4]{\left[\begin{smallmatrix} {#1} & {#2} \\ {#3} & {#4} \end{smallmatrix}\right]}
\nc{\twomatrix}[4]{\left[\begin{array}{cc} {#1} & {#2} \\ {#3} & {#4} \end{array} \right]}
\nc{\threematrix}[9]{\left[\begin{array}{ccc} {#1} & {#2} & {#3} \\ {#4} & {#5} & {#6}\\ {#7} & {#8} & {#9} \end{array} \right]}
\nc{\ad}{\operatorname{ad}}  \nc{\Aut}{\operatorname{Aut}}   \nc{\Inn}{\operatorname{Inn}}   \nc{\Lie}{\operatorname{Lie}} \nc{\Ad}{\operatorname{Ad}} \nc{\Der}{\operatorname{Der}} \nc{\rad}{\operatorname{r}} \nc{\kf}{\operatorname{B}}
\nc{\End}{\operatorname{End}} \nc{\rank}{\operatorname{rank}} \nc{\Ker}{\operatorname{Ker}} \nc{\tr}{\operatorname{tr}} \nc{\sym}{\operatorname{sym}} \nc{\diag}{\operatorname{diag}} \nc{\proy}{\operatorname{pr}} \nc{\Adj}{\operatorname{Adj}}
\nc{\Hess}{\operatorname{Hess}}  \nc{\dif}{\operatorname{d}} \nc{\sen}{\operatorname{sen}} \nc{\grad}{\operatorname{grad}} \nc{\Order}{\operatorname{O}} \nc{\divg}{\operatorname{div}}
\nc{\Iso}{\operatorname{Iso}} \nc{\Diff}{\operatorname{Diff}} \nc{\ricci}{\operatorname{Ric}}  \nc{\Rc}{\operatorname{Rc}} \nc{\Ricci}{\operatorname{Ric}} \nc{\Riem}{\operatorname{Rm}} \nc{\scalar}{\operatorname{sc}} \nc{\scalarm}{\hat{\operatorname{R}}} \nc{\riccim}{\widehat{\operatorname{Ric}}} \nc{\tang}{\operatorname{T}} \nc{\vol}{\operatorname{vol}}
\nc{\mm}{\operatorname{M}} \nc{\CH}{\operatorname{CH}} \nc{\Irr}{\operatorname{Irr}} \nc{\mcc}{\operatorname{mcc}}
\theoremstyle{plain}
\newtheorem{theorem}{Theorem}[section]
\newtheorem{proposition}[theorem]{Proposition}
\newtheorem{corollary}[theorem]{Corollary}
\newtheorem{lemma}[theorem]{Lemma}
\newtheorem{teointro}{Theorem}
\newtheorem*{AC}{Alekseevskii's conjecture}
\theoremstyle{definition}
\newtheorem{definition}[theorem]{Definition}
\theoremstyle{remark}
\newtheorem{remark}[theorem]{Remark}
\title[The Alekseevskii conjecture in low dimensions]{The Alekseevskii conjecture in low dimensions}
\author{Romina M.~Arroyo}
\address{FaMAF $\&$ CIEM, Universidad Nacional de C\'ordoba, C\'ordoba, Argentina}
\email{arroyo@famaf.unc.edu.ar}
\author{Ramiro A.~Lafuente}
\address{Mathematisches Institut, Universit\"at M\"unster, Einsteinstr.~ 62, 48149 M\"unster, Germany}
\email{lafuente@uni-muenster.de}
\thanks{This research was supported by fellowships from CONICET and grants from CONICET, FONCYT and SeCyT (Universidad Nacional de C\'ordoba).}
\thanks{The second author was supported by fellowships from CONICET and the Alexander von Humboldt Foundation.}
\begin{document}

\begin{abstract}
The long-standing Alekseevskii conjecture states that a connected homogeneous Einstein space $G/K$ of negative scalar curvature must be diffeomorphic to $\RR^n$. This was known to be true only in dimensions up to $5$, and in dimension $6$ for non-semisimple $G$. In this work we prove that this is also the case in dimensions up to $10$ when $G$ is not semisimple. For arbitrary $G$, besides $5$ possible exceptions, we show that the conjecture holds up to dimension $8$.
\end{abstract}

\maketitle

\section{Introduction}

A Riemannian manifold $(M^n, g)$ is called Einstein if its Ricci tensor satisfies $\Ricci(g) = c\, g$, for some $c\in \RR$. This is a very subtle condition, since it is too strong to allow general existence results, and at the same time too weak for obtaining obstructions in dimensions above $4$. It is therefore natural to consider the Einstein equation for a special class of metrics such as K\"ahler, Sasakian, with special holonomy, or with some symmetry assumption, among others (see \cite{LeBWang, cruzchica, Sparkssurvey, Wang2012} for further details and examples).

We study this equation on homogeneous manifolds. The classification of homogeneous Einstein spaces is naturally divided into cases according to the sign of the scalar curvature. Ricci-flat homogeneous manifolds are flat by \cite{AlkKml}. If the scalar curvature is positive, the manifold must be compact by Bonet-Myers' theorem, while a theorem of Bochner \cite{Bochner1948} implies that if it is negative, the manifold is non-compact. In the latter case, the following fundamental problem remains unsolved

\begin{AC}\cite[7.57]{Bss}
Any connected homogeneous Einstein space of negative scalar curvature is diffeomorphic to a Euclidean space.
\end{AC}

The purpose of the present article is to investigate this conjecture in low-dimensional spaces. Recall that in dimensions $2$ and $3$, Einstein metrics have constant sectional curvature. Simply-connected homogeneous Einstein $4$-manifolds were classified by G.\ Jensen in his thesis \cite{Jns}, and they are all isometric to symmetric spaces. In dimension $5$, non-compact homogeneous Einstein spaces $G/K$ were studied in \cite{Nkn1}, where it was shown that if $G\neq \Sl_2(\CC)$ then they are isometric to simply-connected Einstein solvmanifolds, and in particular diffeomorphic to $\RR^5$. In the recent work \cite{semialglow} the authors proved that the conjecture holds in dimension $6$, provided there exists a non-semisimple transitive group of isometries (a shorter proof of this fact was recently obtained in \cite{JblPtr}). Our first main result is the following

\begin{teointro}\label{main6}
Let $(M^6,g)$ be a $6$-dimensional connected homogeneous Einstein space of negative scalar curvature, on which neither $\Sl_2(\CC)$ nor $\widetilde{\Sl_2(\RR)}\times \widetilde{\Sl_2(\RR)}$ acts transitively by isometries. Then, $M^6$ is diffeomorphic to $\RR^6$.
\end{teointro}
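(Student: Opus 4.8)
The plan is to reduce the problem to what is already known in low dimensions, namely the $\RR^5$ classification of Nikonorov and the dimension-$6$ result for non-semisimple transitive groups, by a careful analysis of the possible transitive isometry groups $G$ acting on $M^6$. Since the conjecture is known when $M^6$ admits a non-semisimple transitive isometry group, we may assume from the start that $\Iso(M^6,g)^\circ$ has a full isometry group which is semisimple, or rather that every transitive subgroup $G$ is semisimple -- in fact one can work with $G=\Iso(M^6,g)^\circ$ assuming it is semisimple. So the first step is: suppose $G$ is semisimple and acts transitively on the $6$-dimensional Einstein space $M^6=G/K$ with $\scalar<0$; we must derive that $G$ is (locally) $\Sl_2(\CC)$ or $\widetilde{\Sl_2(\RR)}\times\widetilde{\Sl_2(\RR)}$, which are precisely the excluded cases.

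The second step is a dimension count on semisimple Lie groups and their maximal compact subgroups. If $G$ is semisimple with maximal compact $K'$ and $K\subseteq K'$ is the isotropy, then $\dim G - \dim K = 6$, and $G/K$ being non-compact (Bochner) forces $G$ to be non-compact, so $\dim G/K' = \dim G - \dim K' \ge 1$. Running through the non-compact simple Lie groups of small dimension and their products -- $\Sl_2(\RR)$ ($\dim 3$), $\Sl_2(\CC)$ ($\dim 6$), $\Sl_3(\RR)$ ($\dim 8$), $\SO(2,1)\cong\Sl_2(\RR)$, $\SO(3,1)\cong\Sl_2(\CC)$, $\SO(2,2)$, $\SU(2,1)$ ($\dim 8$), $\Spe(2,\RR)\cong\SO(2,3)$, etc. -- and using $\dim G - \dim K = 6$ with $K$ compact, the only semisimple candidates that can act \emph{almost-effectively} and transitively with the right dimension are essentially $\Sl_2(\CC)$ acting on itself (or on $\Sl_2(\CC)/\{e\}$), $\widetilde{\Sl_2(\RR)}\times\widetilde{\Sl_2(\RR)}$ acting on itself, $\widetilde{\Sl_2(\RR)}\times K''$ with $K''$ compact of dimension $3$ (so $K''=\SU(2)$ or $\SO(3)$) and appropriate isotropy, and a few products of $\Sl_2(\RR)$-factors with smaller compact factors. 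One then rules out the compact-factor cases: if $G = G_{\mathrm{nc}}\times G_{\mathrm{c}}$ with $G_{\mathrm{c}}$ a nontrivial compact semisimple factor, the Einstein condition together with the structure of the Ricci tensor on homogeneous spaces (and results on standard/decomposable homogeneous Einstein metrics) leads to a contradiction with $\scalar<0$, or the space splits off a compact Einstein factor which is impossible for $\scalar<0$ homogeneous spaces by an Alekseevskii--Kimelfeld type argument; alternatively these cases are already covered because a compact factor gives rise to a non-semisimple \emph{normal} structure one can exploit. The genuinely semisimple-with-no-compact-factor list then collapses to the two excluded groups.

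The third step handles the remaining low-rank possibilities that survive the crude dimension count but have nontrivial isotropy, for instance $\widetilde{\Sl_2(\RR)}\times\SU(2)$ acting on a $6$-manifold with $\dim K=0$, or $\Sl_2(\CC)$ with $\dim K=0$ but also configurations like $\widetilde{\Sl_2(\RR)}\times\widetilde{\Sl_2(\RR)}\times(\text{compact})$ truncated by isotropy. Here the main tool is the bracket-flow/moving-bracket framework for homogeneous Einstein metrics and the known non-existence results for Einstein metrics on semisimple Lie groups: a non-compact semisimple Lie group admits no left-invariant Einstein metric of negative scalar curvature unless \dots -- in fact one invokes the classification of Einstein homogeneous spaces with semisimple transitive group in these dimensions, or the Lauret--Will / Böhm--Lafuente type structure results, to pin down that the only surviving cases are $\Sl_2(\CC)$ and $\widetilde{\Sl_2(\RR)}^2$. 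Combining: any semisimple $G$ acting transitively by isometries on our $M^6$ with $\scalar<0$ must be locally isomorphic to one of the two excluded groups; since by hypothesis neither acts transitively, $\Iso(M^6,g)^\circ$ is \emph{not} semisimple, hence it contains a non-semisimple transitive subgroup (or is itself non-semisimple and transitive), and then \cite{semialglow} -- equivalently \cite{JblPtr} -- applies to conclude $M^6$ is diffeomorphic to $\RR^6$.

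The hard part will be the bookkeeping in step two and three: one must be sure the enumeration of semisimple Lie groups $G$ with a compact subgroup $K$ satisfying $\dim G-\dim K=6$ and acting almost-effectively is complete, including the subtle cases where $G$ has a compact simple factor of dimension $3$ (the $\SU(2)$ or $\SO(3)$ factor) which is partially or wholly absorbed into the isotropy, and then genuinely prove -- not merely assert -- that these mixed cases cannot carry a homogeneous Einstein metric with negative scalar curvature. This last non-existence is where one leans hardest on the homogeneous-Einstein machinery (the Ricci formula on $\ggo=\kg\oplus\pg$, the behaviour of the scalar curvature functional, and obstructions coming from the semisimple part of $G$), and it is the step most likely to require a delicate, case-by-case argument rather than a single clean principle.
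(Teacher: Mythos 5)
Your overall strategy (classify the semisimple transitive actions in dimension $6$, show that only the two excluded groups survive, then fall back on the non-semisimple result of \cite{semialglow}) is the same as the paper's, but two of your key steps have genuine gaps.

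First, your treatment of semisimple groups with compact simple factors does not work as stated. A compact semisimple factor does \emph{not} ``give rise to a non-semisimple normal structure'': the group is still semisimple, so the non-semisimple dimension-$6$ theorem cannot be invoked. Nor does the metric automatically split off a compact Einstein factor -- an invariant metric on $(G_{\mathrm{nc}}\times G_{\mathrm{c}})/K$ need not be a Riemannian product (Proposition \ref{prodRiem} requires the isotropy to act non-trivially on every invariant subspace of one factor, which fails precisely in the Lie-group-type cases you are worried about). The paper avoids this entirely: by the structure theorem of \cite{alek,JblPtr} (Theorem \ref{structure}), a simply-connected homogeneous Einstein space of negative scalar curvature always admits \emph{some} transitive presentation $G=(G_1A)\ltimes N$ in which $G_1$ has no compact simple factors; if that $G$ is non-semisimple one applies \cite{semialglow}, and if it is unimodular it is semisimple by \cite{Dtt88} and equal to $G_1$, so compact factors never need to be enumerated. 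Without citing that refinement (or reproving it), your case list in step two cannot be closed off.

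Second, your enumeration of the no-compact-factor cases is incomplete and, more importantly, you give no mechanism for killing the cases that are neither Lie groups nor symmetric. In dimension $6$ the classification (Table \ref{tabla}, obtained by duality from the compact classification) also contains $\Spe(1,1)/\Spe(1)\U(1)$ (with $\dim G=10$, $\dim K=4$) and $\SU(2,1)/T^2$ (with $\dim G=8$, $\dim K=2$), which your dimension count over ``small'' simple groups misses. These spaces do admit invariant metrics, so one must actually prove non-existence of Einstein ones; the paper does this via Nikonorov's theorem \cite{Nkn2}: for each of these spaces every invariant metric makes a Cartan decomposition $\ggo=\kg\oplus\pg$ orthogonal (because the isotropy modules in $\qg$ and $\pg$ are pairwise inequivalent), and such metrics are never Einstein. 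Appealing generically to ``the homogeneous-Einstein machinery'' does not substitute for this argument. Finally, note that the statement concerns connected, not simply-connected, $M^6$; passing from $\widetilde M\simeq\RR^6$ back to $M$ requires the solvmanifold identification together with \cite{AC99,Jab15}, a step your outline omits.
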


Remarkably, the question of whether the $6$-dimensional simple Lie groups $\Sl_2(\CC)$ and $\widetilde{\Sl_2(\RR)}\times \widetilde{\Sl_2(\RR)}$ admit a left-invariant Einstein metric is still open. This is however not surprising if one recalls that the total number of homogeneous Einstein metrics on its compact counterpart $S^3 \times S^3$ is still unknown, even though the compact case has been much more investigated in the literature.

Our second main result confirms the conjecture in dimension $7$.

\begin{teointro}\label{main7}
Any $7$-dimensional connected homogeneous Einstein space of negative scalar curvature is diffeomorphic to $\RR^7$.
\end{teointro}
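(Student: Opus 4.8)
The plan is to reduce to a minimal transitive group of isometries and then to rule out, one configuration at a time, every non-solvable possibility that survives the dimension constraints in dimension $7$. First I would fix a presentation $M^7=G/K$ with $G$ connected, $K$ compact and $G$ acting almost effectively, and pass to a \emph{minimal} such presentation using the structure theory of homogeneous Einstein spaces of negative scalar curvature (see \cite{semialglow} and the references therein). One may then assume that in the Levi decomposition $\ggo=\sg\ltimes\rg$, with $\rg$ the radical and $\ngo$ the nilradical, $\sg$ has no compact ideals, $\rg$ is simply connected (hence contains no nontrivial compact subgroup, so $\kg$ embeds into a maximal compact subalgebra of $\sg$), and $\rg=\ag\oplus\ngo$ with $\ag$ projecting onto the abelian quotient $\rg/\ngo$. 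If $\ggo$ is solvable, then $M^7$ admits a transitive solvable group of isometries and is therefore isometric to an Einstein solvmanifold (Heber, Lauret, Jablonski), hence diffeomorphic to $\RR^7$. So the whole content is to prove that $\ggo$ must be solvable, i.e.\ to exclude $\sg\neq 0$.

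Assume $\sg\neq 0$. Since $\dim\ggo=7+\dim\kg$ and $\kg$ embeds into a maximal compact subalgebra of $\sg$, a dimension count should reduce $\sg$ to a short explicit list — $\slg_2(\RR)$, small products of copies of it, and low-rank real classical algebras of dimension at most $10$ such as $\slg_2(\CC)$, $\slg_3(\RR)$, $\sug(2,1)$, $\sog(4,1)$, $\sog(3,2)$, $\spg(4,\RR)$ — with $\dim\rg$ and $\dim\kg$ confined to a narrow range. For each candidate one runs through the finitely many admissible $\sg$-module structures on $\ag\oplus\ngo$ and the compatible nilpotent brackets on $\ngo$, and analyzes the Einstein equation $\Ricci=c\,\ip$ with $c<0$, using the splitting of the Ricci operator into the nilradical contribution — governed by the nilsoliton machinery of Lauret and Nikolayevsky and the classification of low-dimensional Einstein nilradicals — the strictly negative contribution of the semisimple part along the $\sg$-directions (which also fixes the mean curvature vector), and the cross terms coming from the $\sg$-action on $\ngo$ and $\ag$. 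Combined with the sign of $c$ and the fact that for an Einstein metric the Ricci operator must be a symmetric derivation up to the mean-curvature correction, this should eliminate every configuration, except those where $\ggo$ is in fact solvable, or where $M^7$ is identified directly as a simply connected homogeneous space — equivalently, a maximal compact subgroup of $G$ equals $K$ — so that $M^7\cong\RR^7$ anyway; for the ``large'' candidates the task is really to rule out negative-Einstein metrics on a few explicit homogeneous spaces of semisimple groups (some of which, e.g.\ ones diffeomorphic to $S^2\times\RR^5$, would otherwise violate the conjecture). The expectation is that no semisimple group survives when $n=7$; in dimension $6$ the same analysis instead leaves $\Sl_2(\CC)$ and $\wt{\Sl_2(\RR)}\times\wt{\Sl_2(\RR)}$, which explains the exceptions in Theorem~\ref{main6} and are excluded by the count above when $n=7$.

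The main obstacle should be the case $\sg=\slg_2(\RR)$ acting nontrivially on a $4$- or $5$-dimensional radical: there is then no Riemannian product to exploit, the presentation is genuinely non-solvable, and one must exclude Einstein metrics of negative scalar curvature directly from the coupled system relating $\sg$, the abelian part $\ag$ and the nilradical $\ngo$. The delicate sub-points are the ``twisted'' configurations in which the isotropy $\kg$ is the copy of $\sog(2)$ inside $\slg_2(\RR)$ and so interacts with the radical, and the necessity of allowing all $\sg$-invariant inner products, not merely those diagonal with respect to the module decomposition, since an Einstein metric need not respect it. Once these are ruled out, $\ggo$ is forced to be solvable and the Einstein-solvmanifold theorem completes the proof.
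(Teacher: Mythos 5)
Your overall strategy --- use structure theory to reduce to the question of whether a non-solvable presentation can occur, then exclude the finitely many candidate semisimple parts by a case analysis --- is the same as the paper's, but as written the proposal has two genuine gaps. The first is that essentially all of the content is deferred: ``should eliminate every configuration'' and ``the expectation is that no semisimple group survives'' are precisely the statements that need proving, and in dimension $7$ they are not free. On the purely semisimple side one must actually rule out invariant Einstein metrics on the specific spaces $\SU(2,1)/\Delta_{p,q}\U(1)$, $\Sl_3(\RR)/\SO(2)$, $\SO(4,1)/\SO(3)$, $\Spe(1,1)/\Spe(1)$, $(\Sl_2(\RR))^3/\Delta_{a,b,c}T^2$, and the remaining entries of Table \ref{tabla}; this requires the classification itself (obtained by dualizing the compact classification), Nikonorov's Cartan-orthogonality criterion \cite{Nkn2}, and for $\SU(2,1)/\Delta_{0,1}\U(1)$ a separate argument exhibiting a direction of non-negative Ricci curvature. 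None of this is sketched, and it cannot be absorbed into ``a few explicit homogeneous spaces''.

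The second gap is structural and affects feasibility. You propose to treat the Einstein equation for \emph{all} invariant inner products on $\sg\ltimes(\ag\oplus\ngo)$, over all module structures and compatible nilpotent brackets, as one coupled system; in your ``main obstacle'' ($\sg=\slg_2(\RR)$, trivial isotropy) that is a system of $28$ quadratic equations in $28$ metric parameters per configuration, and the nilsoliton machinery you invoke does not even apply until one knows that the orbits of the reductive part and of the nilradical are orthogonal and that the induced metric on $N$ is a soliton. Those facts are the content of Theorem \ref{structure} (from \cite{alek,JblPtr}), which decouples the system and replaces it by the single equation \eqref{eqRicU/K} on $U/K$; the further refinement $\zg(\ug)\perp\ggo_1$ (Theorem \ref{thm_lemadimn}) and Corollary \ref{reductionG1/K} then reduce the diffeomorphism question to $G_1/K$ alone, which in the non-unimodular case has dimension at most $4$ and is Euclidean by Lemma \ref{diff5}. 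With these inputs your hard case collapses to the non-existence of a $6$-dimensional unimodular expanding algebraic soliton with reductive part $\slg_2(\RR)$, settled in the appendix of \cite{semialglow}; without them it is unapproachable as posed. Finally, Theorem \ref{main7} assumes only connectedness: passing to the universal cover is harmless, but returning requires knowing that the cover is a solvmanifold (so that \cite{AC99,Jab15} apply) or a symmetric space, and your argument secures this only when $\ggo$ is already solvable.
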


Besides the case of left-invariant metrics on two simple Lie groups and one very special homogeneous space, we show that the conjecture also holds in dimension $8$.

\begin{teointro}\label{main8}
Let $(M^8,g)$ be an $8$-dimensional connected homogeneous Einstein space of negative scalar curvature which is de Rham irreducible. Assume that $(M^8,g)$ is not an invariant metric on the simply connected  homogeneous space $\left(\Sl_2(\RR)\times \Sl_2(\CC)\right)/\Delta\U(1)$, and that neither $\widetilde{\Sl_3(\RR)}$ nor $\widetilde{\SU(2,1)}$ acts transitively by isometries. Then, $M^8$ is diffeomorphic to $\RR^8$.
\end{teointro}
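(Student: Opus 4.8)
The plan is to follow the general strategy that must have worked in the lower-dimensional cases (dimensions $5$, $6$, $7$) and push it through in dimension $8$, exploiting the de Rham irreducibility hypothesis together with the structural results on homogeneous Einstein spaces of negative scalar curvature. First I would reduce to the case of a transitive group $G$ acting almost effectively on $M^8$, and invoke the structure theory (Lafuente--Lauret, Böhm--Lafuente, and the ``standardization'' results) which says that, up to modifying the presentation, one may assume $G = G_{\mathrm{ss}} \ltimes R$ with $R$ the radical, and that the Einstein condition forces strong constraints: in particular, if $G$ is not semisimple and the associated ``nilradical'' behaves well, one expects $M$ to be an Einstein solvmanifold by the results extending \cite{Nkn1, semialglow}, hence diffeomorphic to $\RR^8$. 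So the bulk of the argument is the case analysis on the Levi factor $G_{\mathrm{ss}}$.

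The key steps, in order, would be: (1) by the $\Ricci(g)=c\,g$ with $c<0$ constraint and Bochner's theorem, no semisimple subgroup of $G$ can act with compact factors mixed in the ``wrong'' way; enumerate the possible Levi subalgebras $\ggo_{\mathrm{ss}}$ that can occur inside the isometry Lie algebra of an $8$-dimensional space, using dimension bounds $\dim \ggo_{\mathrm{ss}} - \dim(\kg \cap \ggo_{\mathrm{ss}}) \le 8$ and the fact that compact simple factors would force a de Rham splitting or a positively-curved direction, contradicting irreducibility and $c<0$. (2) For $G$ non-semisimple: handle the solvable case via the known solvmanifold results, and the mixed case by showing the semisimple part must be one of a short list ($\Sl_2(\RR)$, $\Sl_2(\CC)$, $\Sl_3(\RR)$, $\SU(2,1)$, products thereof), then rule out or identify each using the transitive-action classification on the relevant homogeneous spaces and the Einstein equation. (3) For $G$ semisimple: $8$ is small enough that the only non-compact simple groups with a homogeneous space of dimension $\le 8$ admitting a negatively-curved Einstein metric are of rank one (hyperbolic-type, giving $\RR^8$) or the borderline cases $\widetilde{\Sl_3(\RR)}$ and $\widetilde{\SU(2,1)}$, which are excluded by hypothesis, plus the excluded $(\Sl_2(\RR)\times\Sl_2(\CC))/\Delta\U(1)$; this leaves only symmetric spaces of non-compact type, which are diffeomorphic to $\RR^8$.

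The hard part, I expect, will be step (2) in the mixed (neither semisimple nor solvable) case: one must control how the semisimple part acts on the solvable radical and show that either the metric is actually a solvmanifold metric (invoking the structural ``Einstein $\Rightarrow$ standard/solvable'' type results available in low dimension) or that one is forced into one of the explicitly excluded homogeneous spaces. This requires a careful bookkeeping of isotropy: the compact part $\kg$ of the isotropy must absorb all compact factors of $\ggo_{\mathrm{ss}}$, and the remaining non-compact simple factors acting on a space of total dimension $8$ are severely limited. A secondary obstacle is verifying that the excluded space $(\Sl_2(\RR)\times\Sl_2(\CC))/\Delta\U(1)$ genuinely is the unique ``bad'' quotient of its type in this dimension, which amounts to classifying the $8$-dimensional almost-effective homogeneous spaces with that Levi type and checking which can even carry an invariant metric with $c<0$ — here one uses the moment-map / bracket-flow techniques (Lauret's framework) to detect obstructions, and the remaining undecided case is precisely flagged in the statement rather than resolved.
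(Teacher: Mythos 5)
Your outline has the right global shape (structure theory for the presentation group, then a case analysis driven by the Levi factor), but as written it contains no mechanism for closing any of the genuinely hard cases, and several of its asserted intermediate claims are exactly the statements that require proof. Most seriously, in the semisimple case you claim that a dimension count leaves ``only rank one or borderline cases, hence only symmetric spaces.'' That is false as a shortcut: there is a whole list of non-symmetric, non-product semisimple homogeneous spaces of the non-compact type in dimension $8$ (e.g.\ $\Spe(2,\RR)/T^2$, $\Spe(1,1)/T^2$, $(\Sl_2(\RR)^3)/\Delta_{a_1,a_2,a_3}\U(1)$, $\Sl_2(\RR)\times\Sl_2(\CC)/\U(1)$, $\Sl_2(\RR)\times(\Sl_2(\RR)\times\Sl_2(\RR))/\Delta_{p,q}\U(1)$, $\Sl_2(\RR)\times\SU(2,1)/\SU(2)$), obtained in the paper by dualizing the compact classification, and none of them is excluded by your hypotheses. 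Ruling out Einstein metrics on them is the bulk of the work: for the ones whose isotropy representation forces every invariant metric to make a Cartan decomposition orthogonal one can quote Nikonorov, but the products $\Sl_2(\RR)\times(G_1/H)$ admit non-product invariant metrics with \emph{no} orthogonal Cartan decomposition, and the paper needs a new argument (Proposition \ref{Propsl2RxG1}: a direct Ricci computation in a Milnor basis showing the $\slg_2(\RR)$ factor must split off metrically, contradicting the non-existence of left-invariant Einstein metrics on $\widetilde{\Sl_2(\RR)}$). Your proposal contains nothing that would substitute for this.

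Two further gaps. First, in the non-semisimple case the relevant equation is not the Einstein equation on a solvmanifold but the generalized equation $\Ricci_{U/K}=c\cdot g+\tr S(\theta(\cdot))^2$ on the reductive part $U/K$, coupled with the nilsoliton and moment-map conditions; the reduction to $\dim G_1/K\leq 5$ rests on the new orthogonality $\zg(\ug)\perp\ggo_1$ (Theorem \ref{thm_lemadimn}), and one must then actually solve that generalized equation for each admissible representation $\theta$ of $\ggo_1$ on $\ngo$ --- e.g.\ on $(\Sl_2(\RR)\times\Sl_2(\RR))/\Delta_{p,q}\SO(2)$ with $\dim\ngo=2$, where the paper uses moment-map rigidity to pin down $\theta$ and then an explicit computation of $\Ricci^\theta$ to derive a contradiction. ``One expects $M$ to be an Einstein solvmanifold by results extending \cite{Nkn1,semialglow}'' is not an argument; those results do not cover dimension $8$. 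Second, the theorem assumes $M^8$ is only connected, while the structure theory applies to the universal cover; the passage back requires knowing the space is isometric to a solvmanifold so that the results of \cite{AC99,Jab15} on quotients apply, and your proposal does not address this reduction at all.
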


It is important to remark that in Theorems \ref{main6}, \ref{main7} and \ref{main8} we actually obtain a stronger conclusion, namely that the spaces admit a simply-transitive solvable group of isometries (i.e.~ they are isometric to a \emph{solvmanifold}). We mention here that there is a stronger version of the conjecture, which is obtained by replacing the conclusion ``diffeomorphic to a Euclidean space'' by ``isometric to a simply-connected solvmanifold'' (this is commonly referred to as the \emph{strong Alekseevskii conjecture} in the literature, see \cite{JblPtr}). Both statements turn out to be equivalent when the isometry group is linear, and in fact at the present time all known-examples of homogeneous Einstein spaces with negative scalar curvature are isometric to simply-connected solvmanifolds.

Finally, we focus on the case where the presentation group is not semisimple. Our main result in this direction is the following

\begin{teointro}\label{mainnonuni}
Let $(M^n,g)$ be a simply-connected non-compact homogeneous Einstein space of dimension less than or equal to $10$, which is de Rham irreducible. If $(M^n,g)$ admits a non-semisimple transitive group of isometries, then $M^n$ is diffeomorphic to $\RR^n$.
\end{teointro}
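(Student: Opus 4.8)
The plan is to reduce the statement to a finite classification problem via the structure theory of homogeneous Einstein spaces with negative scalar curvature, and then to rule out the remaining low-dimensional cases one by one using dimension counting and known non-existence results for nilsolitons. Recall the standard setup: after the work of Lauret and others, a simply-connected non-compact homogeneous Einstein space admitting a non-semisimple transitive group $G$ can be studied through its algebraic structure. The first step is to apply the reduction to \emph{standard} solvmanifolds. By the resolution of the generalized Alekseevskii conjecture in the relevant cases (and the fact that one may pass to a modification of the presentation so that a maximal connected normal solvable subgroup acts with small-codimension orbits), one wants to show that the Einstein metric is actually that of a solvmanifold $S = G'/K'$ with $G'$ solvable and acting simply-transitively — since a simply-connected solvmanifold is diffeomorphic to $\RR^n$, this finishes the proof.

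First I would invoke the structural results of \cite{semialglow} (and the associated literature on semi-algebraic solitons and the bracket-flow/moment-map machinery) to reduce to the case where $G = G_{ss}\ltimes G_{nil}$ with $G_{ss}$ semisimple of noncompact type acting on the nilradical, and where $K$ is compact. The de Rham irreducibility assumption rules out products with Euclidean factors, so the isotropy and the nilradical cannot be too degenerate. The key dimension estimate: if $G$ is non-semisimple then its nilradical $\ngo$ is nontrivial, and by the Einstein condition the metric on the solvable part is a Ricci-soliton extension of a nilsoliton on $\ngo$. One then bounds $\dim \ggo_{ss}$, $\dim\kg$ and $\dim\ngo$ against $n \le 10$. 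The noncompact simple factors of $\ggo_{ss}$ have real dimension at least $3$ (for $\slg_2(\RR)$) or $6$ ($\slg_2(\CC)$), $8$ ($\slg_3(\RR)$, $\sug(2,1)$), etc., and each such factor forces a compatible compact isotropy and a large enough nilradical on which it acts without fixed vectors; cataloguing the possible $(\ggo_{ss}, \kg, \ngo)$ with total dimension $\le 10$ leaves only finitely many candidates.

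Second, for each surviving candidate I would either exhibit the space explicitly as a known Einstein solvmanifold (hence diffeomorphic to $\RR^n$), or derive a contradiction with the Einstein equation. The main tools here are: (i) the fact that an Einstein homogeneous space with $G$ non-semisimple and a prescribed nilradical must have its nilradical carry a nilsoliton inner product, and the classification of low-dimensional nilsoliton Lie algebras; (ii) Lauret's structure theorem forcing, under suitable hypotheses, the existence of a simply-transitive solvable subgroup; and (iii) the $\slg_2$-representation theory constraints on how $\ggo_{ss}$ acts on $\ngo$ together with the compatibility of the inner product with the Einstein condition (the ``$(0,1)$-component'' and trace conditions from the moment-map picture). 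In the borderline dimensions $9$ and $10$ one also uses the already-established cases in lower dimensions (Theorems \ref{main6}, \ref{main7}, \ref{main8} and their proofs) by passing to invariant subspaces or quotients.

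The main obstacle I expect is precisely the case analysis when $G_{ss}$ is a single copy of $\slg_2(\RR)$ or $\slg_2(\CC)$ acting nontrivially on a nilradical of dimension up to $7$: here the space need not obviously be a solvmanifold, and one must show that the isometry group actually contains a simply-transitive solvable subgroup — equivalently, that the semisimple part can be ``traded'' for a solvable complement preserving the metric. This is where the subtle interplay between the Einstein condition, the pre-Einstein derivation, and the representation of $\ggo_{ss}$ on $\ngo$ must be controlled; the generalized Alekseevskii conjecture results of \cite{semialglow} combined with a careful nilsoliton-by-nilsoliton check should close it, but verifying that no exotic configuration survives in dimension $10$ is the delicate part.
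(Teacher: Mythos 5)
Your proposal aims at the wrong target, and this creates a genuine gap. You set out to show that the metric is that of a solvmanifold (a simply-transitive solvable isometry group), i.e.\ the \emph{strong} Alekseevskii conjecture; but the paper only establishes that up to dimension $8$, and explicitly cannot do it in dimensions $9$ and $10$. The idea that actually makes Theorem~\ref{mainnonuni} tractable, and which is absent from your outline, is purely topological: with the presentation $G=(G_1A)\ltimes N$ of Theorem~\ref{structure} one has a diffeomorphism $G/K\simeq G_1/K\times AN$ with $AN$ simply-connected solvable, so $M^n\simeq\RR^n$ if and only if $G_1/K$ is diffeomorphic to a Euclidean space (Corollary~\ref{reductionG1/K}). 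Since $\dim\zg(\ug)\ge 1$ (non-unimodularity), $\theta|_{\zg(\ug)}$ is faithful, and $\dim\ngo=1$ forces a product, one gets $\dim G_1/K\le n-3\le 7$, and the problem reduces to the classification of semisimple homogeneous spaces of the non-compact type up to dimension $7$ (Table~\ref{tabla}, obtained by duality from the compact classification). Almost all of these are diffeomorphic to Euclidean spaces outright, with no need to say anything about the metric on them; your ``main obstacle'' of trading the semisimple part for a solvable complement simply does not arise. For the few exceptions (e.g.\ $G_1/K=\Sl_2(\CC)/\U(1)$), one does not produce a solvable group either: one shows $\theta|_{\ggo_1}=0$ by elementary representation theory (no nontrivial real representation of $\slg_2(\CC)$ or $\sug(2,1)$ in the available $\dim\ngo\le 3$), which forces a de Rham splitting, or, in the single hard case $\dim\ngo=4$ in dimension $10$, one rules out the generalized Einstein equation \eqref{eqRicU/K} by an explicit Ricci computation combined with the moment-map rigidity of \eqref{eqmmtheta}.

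Two further points. First, the product splittings just mentioned require knowing that the orbits of $Z(U)$ and $G_1$ are orthogonal, which is \emph{not} part of the previously known structure theory; it is the new Theorem~\ref{thm_lemadimn} of the paper, and your proposal has no substitute for it. Second, your dimension count is too loose: you assert that each noncompact simple factor ``forces a large enough nilradical on which it acts without fixed vectors,'' but $\ggo_1$ may well act trivially on $\ngo$ or on parts of it; the actual constraint used is that $\theta|_{\ggo_1}\neq 0$ (else product) together with $\theta|_{\ggo_1}$ landing in $\End(\ngo)$ with $\dim\ngo\in\{2,3,4\}$, which restricts the simple ideals that can act nontrivially. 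Without the reduction to $G_1/K$ and the Table~\ref{tabla} classification, your case analysis in dimensions $9$ and $10$ cannot be closed by the tools you cite.
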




Using a close link relating non-compact homogeneous Einstein spaces and expanding homogeneous Ricci solitons (cf.~\cite{HePtrWyl,alek} and \cite{Jbl13b}), Theorem \ref{mainnonuni} immediately implies the following result.

\begin{corollary}
Let $(M^n,g)$ be a simply-connected expanding homogeneous Ricci soliton which is not Einstein, of dimension less than or equal to $9$, and which is de Rham irreducible. Then, $M^n$ is diffeomorphic to $\RR^n$.
\end{corollary}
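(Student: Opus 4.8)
The plan is to deduce the statement from Theorem~\ref{mainnonuni} by passing to the \emph{Einstein extension} of the soliton. Recall from \cite{HePtrWyl,alek,Jbl13b} that a simply-connected expanding homogeneous Ricci soliton $(M^n,g)$ --- say with $\ricci(g)=c\,g+\tfrac{1}{2}L_Xg$ and $c<0$ --- gives rise to a simply-connected homogeneous Einstein space $(\hat M^{n+1},\hat g)$ of one dimension higher, still non-compact (negative scalar curvature), constructed as a warped-product-type metric on $M\times\RR$ in which the $\RR$-direction acts by the flow of $X$ composed with a homothetic rescaling; if $G$ is a transitive group of isometries of $(M,g)$ then a group of the form $\hat G=G\ltimes\RR$ acts transitively on $(\hat M,\hat g)$, and the construction is compatible with the presence of a simply-transitive solvable group of isometries. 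Since $(M,g)$ is not Einstein the extension is genuine (the $\RR$-direction acts non-trivially), and since $n\le 9$ we have $n+1\le 10$, so $(\hat M^{n+1},\hat g)$ falls within the scope of Theorem~\ref{mainnonuni}.

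First I would check that $(\hat M,\hat g)$ satisfies the hypotheses of Theorem~\ref{mainnonuni}: it is a simply-connected non-compact homogeneous Einstein space of dimension at most $10$, so what remains is that it be de Rham irreducible and admit a \emph{non-semisimple} transitive group of isometries. For the non-semisimplicity, one verifies that $\hat G=G\ltimes\RR$ has non-trivial solvable radical: the added $\RR$-factor acts on the Lie algebra of $G$ by a semisimple, ``positive'' rescaling-type derivation, which cannot be the adjoint action of an element of a semisimple ideal, so it enlarges the radical; tracking the Levi decomposition of $\hat G$ makes this precise (alternatively, a non-Einstein homogeneous Ricci soliton cannot admit a semisimple transitive isometry group, so $G$ is already non-semisimple). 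For de Rham irreducibility, one uses that $(M,g)$ is de Rham irreducible and non-flat (being non-Einstein). Since $\hat g$ has negative scalar curvature, a de Rham splitting of $(\hat M,\hat g)$ would be a product of at least two irreducible Einstein factors, each of negative scalar curvature and hence of dimension $\ge 2$; because the warped-product construction has a one-dimensional base, such a splitting would have to respect the base--fiber structure and thus induce a de Rham splitting of $(M,g)$, contradicting its irreducibility. Establishing that the Einstein extension inherits de Rham irreducibility is the step I expect to be the main obstacle.

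Granting this, Theorem~\ref{mainnonuni} applies to $(\hat M^{n+1},\hat g)$ and shows it is diffeomorphic to $\RR^{n+1}$; by the stronger conclusion recorded after that theorem, $(\hat M,\hat g)$ is moreover isometric to a simply-connected solvable Lie group $\hat S$ with a left-invariant metric. By the structure theory of Einstein solvmanifolds, $\hat S$ splits as $\hat S=S\rtimes\RR$ for a codimension-one closed solvable subgroup $S$, and by the solvmanifold-compatibility of the Einstein-extension correspondence, $(M,g)$ is then isometric to the solvsoliton $(S,g|_S)$. A simply-connected solvable Lie group is diffeomorphic to a Euclidean space, so $M^n\cong S\cong\RR^n$; in particular $(M^n,g)$ is isometric to a simply-connected solvmanifold, which yields the (strong form of the) assertion.
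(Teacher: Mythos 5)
Your route is exactly the paper's: the paper offers no written proof of this corollary beyond the remark that the soliton--Einstein correspondence of \cite{HePtrWyl,alek,Jbl13b} makes it an immediate consequence of Theorem \ref{mainnonuni}, and your write-up is essentially an honest unpacking of that remark. The de Rham irreducibility transfer that you single out is indeed the point the paper leaves implicit, and your sketch (a splitting of the extension must place the new $\RR$-direction inside one factor and hence projects to a splitting of the soliton) is the right idea.

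There is, however, one step in your final paragraph that does not go through as written. The ``stronger conclusion'' that the Einstein space is isometric to a simply-connected solvmanifold is recorded in the paper only for Theorems \ref{main6}, \ref{main7} and \ref{main8}, i.e.\ up to dimension $8$; it is \emph{not} claimed for Theorem \ref{mainnonuni} in dimensions $9$ and $10$ --- in the proof of that theorem the semisimple part $G_1/K$ can be, e.g., a covering of $\Sl_2(\RR)^3/\Delta_{a,b,c}T^2$, which is shown to be diffeomorphic to $\RR^7$ but is not shown to carry a solvmanifold metric. So for $n=8,9$ you cannot descend via $\hat S = S\rtimes\RR$, and the bare topological implication ``$M\times\RR\cong\RR^{n+1}$ forces $M\cong\RR^n$'' is false for general manifolds (Whitehead-type examples). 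The fix stays inside the paper's framework: the extension has the same semisimple part $\hat G_1/K = G_1/K$, and the diffeomorphism $G/K\cong G_1/K\times AN$ of Corollary \ref{reductionG1/K} only uses the structure of Theorem \ref{structure}, which solitons also enjoy by \cite{alek,Jbl}; since the proof of Theorem \ref{mainnonuni} shows $G_1/K$ is diffeomorphic to a Euclidean space, so is $M$. (Alternatively, $M$ deformation retracts onto a compact homogeneous space, so the contractibility of $M$ --- which does follow from $M\times\RR\cong\RR^{n+1}$ --- already forces $M\cong\RR^n$.)
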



With regards to other previous known results on low-dimensional homogeneous Einstein spaces, we mention that the classification of simply-connected compact homogeneous Einstein manifolds was obtained in \cite{AleDotFer} in dimension $5$, and in \cite{Nkn04} in dimension $7$. Partial results in dimension $6$ may be found in \cite{NknRod03}. In \cite{BhmKrr} it was proved that all simply-connected compact homogeneous spaces of dimension less than $12$ admit a homogeneous Einstein metric. In the non-compact case, the classification of Einstein solvmanifolds in low dimensions was studied in \cite{finding, Wll03, NikitenkoNikonorov, FC13}.

The starting point for the proof of our main results are the structural results for non-compact homogeneous Einstein spaces given in \cite{alek}, and specially its more recent refinements proved in \cite{JblPtr}. Roughly speaking, these results state that the simply-connected cover of such a space admits a very special presentation of the form $G/K$, where $G = \left( G_1 A\right)\ltimes N$ is a semi-direct product of a nilpotent normal Lie subgroup $N$ and a reductive Lie subgroup $U = G_1 A$, with center $A$ and whose semisimple part $G_1 = [U,U]$ has no compact simple factors and contains the isotropy $K$. Moreover, the orbits of $U$ and $N$ are orthogonal at $eK$, the induced metric on $N$ is a homogeneous Ricci soliton, and the induced metric on $U/K$ satisfies an Einstein-like condition in which the action of $U$ on $N$ comes into play (see \eqref{eqRicU/K} below). In the present article we further improve those results by showing that the orbits of $A$ and $G_1$ are also orthogonal at $eK$ (Theorem \ref{thm_lemadimn}). This allows us to reduce the problem to solving the generalized Einstein equation \eqref{eqRicU/K} on $G_1/K$, which turns out to be a homogeneous space of dimension at most $7$ with semisimple transitive group. Moreover, as an application of our new structure refinements we present a short proof of a result of Jablonski \cite{Jbl13b} which states that homogeneous Ricci solitons are algebraic (Corollary~ \ref{cor_algebraic}).

The reduction to the simply-connected case is possible in dimensions $8$ and lower because we show that those spaces are isometric to solvmanifolds, thus allowing us to apply the results in \cite{Jab15}.

In order to study the Einstein equation (and its generalized version) in the semisimple case, we give in Table \ref{tabla} a complete classification of non-compact homogeneous spaces with a semisimple transitive group without compact simple factors, in dimensions up to $8$. The classification is based on that of the compact case, mainly given in \cite{BhmKrr}, and a duality procedure \cite{Nkn1}. It includes some infinite families, such as the non-compact analogs of the Aloff-Wallach spaces, and some other examples in dimension $8$. To solve the Einstein equation for homogeneous metrics on these spaces we proceed case by case, studying the isotropy representations, and in many cases the results from \cite{Nkn2} can be applied to conclude that there is no solution. However, in some cases --mostly in higher dimensions-- this is not enough, and a very detailed analysis of the Ricci curvature is carried out. As a by-product of this analysis, a general non-existence result for some cases where $\Sl_2(\RR)$ is one of the simple factors of the transitive group is given in Proposition \ref{Propsl2RxG1}.

One of the reasons why we are not able to extend Theorem \ref{mainnonuni} to dimensions $11$ and higher is that already in dimension $11$, examples such as $( \Sl_2(\CC) \cdot \RR )\ltimes \RR^4$ appear, with $N = \RR^4$ and $\Sl_2(\CC)$ acting non-trivially on it. The homogeneous Einstein equation for such a space reduces to an equation for left-invariant metrics on $\Sl_2(\CC)$ which is even more general than the Einstein equation.

%
%
%
%


The article is organized as follows. In Section \ref{prelimstruct} we state the structure theorems for non-compact homogeneous Einstein spaces, since they will be repeatedly used along the paper, and prove the new refinements metioned above. In Section \ref{semisimple} we prove Theorem \ref{thmsemisimple}, which deals with the semisimple case, and in order to do that we give the classification of non-compact semisimple homogeneous spaces up to dimension $8$. This, together with previously known results, already implies Theorem \ref{main6}. In Section \ref{sectionnonuni} we prove Theorem \ref{mainnonuni}, and then in Section \ref{strong} we focus on the strong Alekseevskii conjecture and complete the proofs of Theorems \ref{main7} and \ref{main8}.

\vs \noindent {\it Acknowledgements.} It is our pleasure to thank Jorge Lauret for fruitful discussions, and Christoph B\"ohm for providing useful comments on a draft version of this article.

Part of this research was carried out while the first author was a visitor at McMaster University. She is very grateful to the Department of Mathematics, the Geometry and Topology group and especially to McKenzie Wang for his kindness and hospitality.

\section{Structure of non-compact homogeneous Einstein spaces}\label{prelimstruct}

In this section we review the most important known facts about the algebraic structure of non-compact homogeneous Einstein spaces, since they will be crucial in the proof of our main results. Here and throughout the rest of the article, all manifolds under consideration are connected and all homogeneous spaces are almost-effective, unless otherwise stated.

\begin{theorem}[\cite{alek,JblPtr}]\label{structure}
Let $(M,g)$ be a simply-connected homogeneous Einstein space with negative scalar curvature. Then, there exists a transitive Lie group of isometries $G$ whose isotropy at some point $p\in M$ is $K$, with the following properties:
\begin{itemize}
    \item[(i)] $G = \left(G_1 A\right) \ltimes N$, where $N$ is a nilpotent normal Lie subgroup, $U = G_1 A$ is a reductive Lie group with center $A = Z(U)$, and $G_1 = [U,U]$ is semisimple without any compact simple factors and contains the isotropy $K$.
    \item[(ii)] The orbits of $U$ and $N$ are orthogonal at $p$.
    \item[(iii)] The induced left-invariant metric $g_N$ on $N$ is a Ricci soliton (i.e.~ $(N, g_N)$ is a \emph{nilsoliton}).
    \item[(iv)] The Ricci curvature of the induced $U$-invariant metric $g_{U/K}$ on $U/K$ is given by
    \begin{equation}\label{eqRicU/K}
        \ricci_{U/K}(Y,Y) = c \cdot g_{U/K}(Y,Y) +  \tr \left(S\left(\theta(Y)\right)^2\right),
    \end{equation}
        for some $c<0$, where $\theta : \ug \to \Der(\ngo)$ is the corresponding infinitesimal action ($\ug = \Lie(U), \ngo = \Lie(N)$), $S(A) = \unm\left(A + A^t\right)$, and the transpose is taken relative to the nilsoliton inner product on $\ngo$.
    \item[(v)] The infinitesimal action $\theta$ and the nilsoliton metric satisfy the following compatibility condition:
    \begin{equation}\label{eqmmtheta}
        \sum_i [\theta(Y_i), \theta(Y_i)^t] = 0,
    \end{equation}
    where the sum is taken over an orthonormal basis for $\ug$\footnote{See Remark \ref{remarks}, \eqref{remarkinfinitesimal} below.}. Moreover, $\theta(Y) = \theta(Y)^t$ for every $Y\in \zg(\ug)$.
\end{itemize}
Conversely, if a simply-connected homogeneous manifold admits a transitive group of isometries $G$ satisfying $(i)-(v)$, then it is Einstein, with negative scalar curvature.
\end{theorem}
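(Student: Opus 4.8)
The plan is to combine the variational (moment-map) description of homogeneous Einstein metrics, due to Lauret, with Lie-theoretic structure theory for the transitive group $G$. Throughout one works at the level of the metric Lie algebra $(\ggo,\ip)$ and uses the general formula for the Ricci operator of a homogeneous space together with the fact that, by Lauret's correspondence, a homogeneous Einstein (more generally Ricci soliton) metric is a critical point of the norm-squared functional for the $\Gl(\ggo)$-action on the variety of Lie brackets; equivalently the Ricci operator has the form $c\cdot\mathrm{Id}+D$ with $D$ a derivation, and for nilpotent $\ngo$ this is precisely the nilsoliton equation.

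First I would replace $G$ by the identity component of $\Iso(M,g)$ and then pass to a transitive subgroup with isotropy $K$ as small as possible. The crucial preliminary step is to show that such a space is \emph{standard}: its solvable radical already carries all of the non-compactness, and the Levi decomposition can be arranged so that the semisimple part $G_1=[U,U]$ has no compact simple factors. The mechanism is that a compact simple factor $L$ generates a compact subgroup over which one may average; feeding the Killing form of $\Lie(L)$ (which is negative definite) into the homogeneous Ricci formula produces a strictly positive contribution that is incompatible with $\scalar<0$ --- this is the heart of the non-existence arguments of \cite{alek}. Next, let $\ngo\subseteq\ggo=\Lie(G)$ be the nilradical; one shows $N=\exp\ngo$ is normal and chooses a reductive complement $\ug$ with $\ug=\ag\oplus\ggo_1$, $\ag=\zg(\ug)$, $\ggo_1=[\ug,\ug]$, $K\subseteq G_1$. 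The non-trivial point for (i) is that the non-nilpotent part of the radical lies in $\zg(\ug)$ and acts on $\ngo$ by self-adjoint derivations with respect to the nilsoliton inner product; this excludes ``Iwasawa-type'' non-unimodular directions outside a nilsoliton and is exactly what the criticality (moment-map) condition for the scalar curvature functional forces.

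The remaining items are projections of the equation $\Ricci=c\cdot g$. For (ii) I would use that $N$ is normal and $\ad(X)$ is nilpotent, hence trace-free, for $X\in\ngo$; plugging this into the mixed block of $\Ricci-c\cdot g=0$ forces the block of the metric pairing $\ug$-directions with $\ngo$-directions to vanish. Restricting $\Ricci=c\cdot g$ to vectors tangent to the $N$-orbit and using (ii), the intrinsic Ricci of the nilmanifold $(N,g_N)$ equals $c\cdot g_N$ plus a correction term built from $\theta$; condition \eqref{eqmmtheta} is exactly what makes this correction a multiple of the identity plus a derivation, giving the nilsoliton equation (iii). Restricting instead to vectors tangent to the $U$-orbit and using the O'Neill submersion formulas for the (Riemannian) fibration of $M$ by $N$-orbits yields \eqref{eqRicU/K}, the term $\tr(S(\theta(Y))^2)$ being the contribution of the $A$-tensor of that fibration. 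Finally the two assertions in (v) are the $\ggo_1$- and $\ag$-components of the same stationarity condition. For the converse one reverses this: build $(\ggo=\ug\oplus\ngo,\ip)$ with $\ug\perp\ngo$, $\ip|_{\ngo}$ the nilsoliton inner product and $\ip|_{\ug}$ corresponding to $g_{U/K}$, compute $\Ricci$ from the homogeneous Ricci formula, and observe that (iii)--(v) are exactly what makes every block of $\Ricci-c\cdot g$ vanish; a trace computation then gives $\scalar<0$.

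\emph{The main obstacle} is, by a wide margin, the structural part (i)--(ii): ruling out compact simple factors, showing the non-nilpotent radical acts centrally and symmetrically, and establishing orthogonality of the $U$- and $N$-orbits. This requires the genuinely hard analytic input of \cite{alek,JblPtr} --- properness of the moment map restricted to the relevant $\Gl(\ggo)$-orbit, non-existence of non-flat unimodular homogeneous Einstein metrics, and delicate control of the signature of the Ricci operator --- whereas (iii)--(v) and the converse are comparatively mechanical once the structure in (i)--(ii) is available.
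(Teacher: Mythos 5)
This statement is not proved in the paper at all: it is imported verbatim from \cite{alek} and \cite{JblPtr} (as the attribution in the theorem header indicates), and the paper's contribution begins only afterwards, with Theorem \ref{thm_lemadimn}. So the honest ``proof'' expected here is an assembly of citations: (i)--(iv) and the converse are the main structure results of \cite{alek} for semi-algebraic solitons specialized to the Einstein case, while the absence of compact simple factors in $G_1$ and the symmetry of $\theta(Y)$ for central $Y$ in (v) are the refinements of \cite{JblPtr}. Your sketch correctly locates the center of gravity --- you say explicitly that (i)--(ii) need the hard input of those references --- and your descriptions of (iii), (iv), (v) and of the converse are in the right spirit.

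However, several of the mechanisms you propose would not work as stated, so the sketch cannot be read as a proof outline. Most seriously, your argument for (ii) --- that $\ad(X)$ is trace-free for $X\in\ngo$ and that ``plugging this into the mixed block of $\Ricci - c\cdot g = 0$ forces the block of the metric pairing $\ug$-directions with $\ngo$-directions to vanish'' --- fails: for an arbitrary presentation $G/K$ the Einstein equation in mixed directions is a genuine equation, not an orthogonality constraint, and the content of (ii) is precisely that one can \emph{choose} a transitive group and a reductive complement for which the orbits are orthogonal. In \cite{alek} this is extracted from the stratification of the variety of nilpotent Lie brackets and the equality case of a delicate curvature estimate, not from a trace identity. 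Secondly, ``the Ricci operator has the form $c\cdot\mathrm{Id}+D$ with $D$ a derivation'' is the \emph{algebraic soliton} condition on a Lie group, not the Einstein condition on $G/K$; the soliton structure appears only on $N$ (item (iii)) and is itself a consequence of the structure theory, not its starting point. Thirdly, the correction term $\tr\bigl(S(\theta(Y))^2\bigr)$ in \eqref{eqRicU/K} is the contribution of the second fundamental form of the $N$-orbits (O'Neill's $T$-tensor), since $S(\theta(Y))$ is their shape operator in the direction $Y$; it is not the $A$-tensor of the fibration. Finally, the exclusion of compact simple factors is not an averaging argument over a compact subgroup; it is one of the main theorems of \cite{JblPtr} and rests on showing that a compact factor produces directions of nonnegative Ricci curvature. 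None of these points can be repaired by elementary means, so the appropriate course --- and the one the paper takes --- is to quote the theorem from \cite{alek,JblPtr} rather than reprove it.
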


\begin{remark}\label{remarks}
\begin{enumerate}[(a)]
    \item\label{remarkinfinitesimal} Conditions (i) and (ii) may also be interpreted at the infinitesimal level, as follows:
    Let $\ggo, \ug, \ngo, \kg$ be the Lie algebras of the groups $G, U, N, K$, respectively. We have that $\ggo = \ug \ltimes_\theta \ngo$, with $\ug$ a reductive subalgebra and $\ngo$ the nilradical of $\ggo$ (the maximal nilpotent ideal). Consider the reductive decomposition $\ggo = \kg \oplus \pg$ for $G/K$, where $\pg$ is the orthogonal complement of $\kg$ relative to the Killing form of $\ggo$. This induces a reductive decomposition $\ug = \kg \oplus \hg$ for the homogeneous space $U/K$, by letting $\hg := \pg \cap \ug$. The $G$-invariant metric $g$ on $G/K$ is thus identified with an $\Ad(K)$-invariant inner product $\ip$ on~ $\pg$, and one has that
    \[
        \langle \hg, \ngo \rangle = 0.
    \]
    For technical reasons, it is sometimes convenient to extend this inner product to an inner product on $\ggo$, which we will also denote $\ip$, by letting $\kg \perp\pg$ and choosing on $\kg$ some $\Ad(K)$-invariant inner product. By doing so, we clearly obtain $\langle \ug, \ngo \rangle = 0$. In fact, in condition (v), by an orthonormal basis of $\ug$ we mean that it is orthonormal with respect to the inner product extended as explained above.
    \vspace{1mm}

    \item\label{remarktheta} $\theta : \ug \to \Der(\ngo)$ is nothing but the adjoint representation of $\ggo$ co-restricted to act on the nilradical, that is,
    \[
        \theta(Y) X = [Y,X] \in \ngo, \qquad X\in \ngo, \,\, Y\in \ug.
    \]
    It was noticed by J.~Lauret that condition \eqref{eqmmtheta} is equivalent to $\theta$ being a zero of the moment map associated with the natural $\Gl(\ngo)$-action on the vector space $\End(\ug,\End(\ngo))$ (see \cite[Appendix]{semialglow} and \cite[$\S 2.1$]{JblPtr} for more details on this fact).

    \vspace{1mm}

    \item The Einstein constant of $(G/K,g)$ and the cosmological constant of the nilsoliton $(N,g_N)$ both coincide with the scalar $c<0$ in condition (iv).

    \vspace{1mm}

    \item According to the construction procedure for expanding algebraic solitons described in \cite[\S 5]{alek}, it is easy to see that given any non-compact Einstein homogeneous space $G/K$, we can always build another one with the same $U/K$ but with abelian nilradical.

    \vspace{1mm}

    \item The simply-connected hypothesis is not necessary for obtaining the results at the infinitesimal level. However, it turns out to be necessary for the converse assertion to hold. In particular, one question which still remains unanswered is whether any homogeneous Einstein space with negative scalar curvature is simply connected. This is known to be true when the universal cover is a solvmanifold, by the results in \cite{AC99, Jab15}.

    \vspace{1mm}

    \item\label{rmkDotti} If $G$ is a unimodular Lie group, then by \cite[Theorem 2]{Dtt88} it must in fact be semisimple, and hence it equals $G_1$. In this case, the only information that Theorem \ref{structure} provides is that $G_1$ has no compact simple factors.

    \vspace{1mm}

    \item On the other hand, if $(M,g)$ admits a non-semisimple transitive group of isometries, it follows from \cite{alek, JblPtr} that the group $G$ in Theorem \ref{structure} may be chosen to be non-unimodular. In this case, the so called ``mean curvature vector'' $H$, implicitly defined by
    \[
        \langle H, X \rangle = \tr \left( \ad X\right), \qquad \forall \, X\in\ggo,
    \]
    is non-zero.
\end{enumerate}
\end{remark}
By using that under the hypothesis of Theorem \ref{structure}, $G/K$ is diffeomorphic to the product manifold $G_1/K \times AN$, with $S = AN$ a simply-connected solvable Lie group, one obtains the following
\begin{corollary}\label{reductionG1/K}
Let $(M,g)$ be a simply-connected homogeneous Einstein space with negative scalar curvature, and let $G/K$ be the presentation given in Theorem \ref{structure}. Then, $M$ is diffeomorphic to a Euclidean space if and only if $G_1/K$ is so.
\end{corollary}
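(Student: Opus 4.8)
The plan is to use the diffeomorphism type of $G/K$ that is forced by the structure theorem. By Theorem \ref{structure}(i), the group $G$ decomposes as $G=(G_1A)\ltimes N$, with $N$ nilpotent normal, $A=Z(U)$ abelian, and $G_1=[U,U]$ semisimple containing the isotropy $K$. First I would argue that, as a manifold, $G$ fibers over $G_1$ with fiber the solvable group $S:=AN$. More precisely, the semidirect product structure gives that the underlying manifold of $G$ is diffeomorphic to the product $G_1\times S$: since $N\trianglelefteq G$ and $G/N\cong G_1A=U$, and since $U=G_1A$ is a reductive group with $G_1$ semisimple and $A$ central, the exponential-type coordinates coming from a Cartan-type decomposition of $G_1$ together with the linear parametrizations of the simply-connected solvable pieces $A$ and $N$ yield global coordinates. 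Passing to the quotient by $K$, which sits inside $G_1$, one gets that $G/K$ is diffeomorphic to $(G_1/K)\times S$ as manifolds. (This is precisely the statement quoted in the excerpt just before the corollary, so I may take it as given.)

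Next I would invoke simple connectivity. Since $(M,g)$ is simply-connected and $M\cong (G_1/K)\times S$, and $S=AN$ is a simply-connected solvable Lie group (hence contractible, diffeomorphic to a Euclidean space $\RR^d$ for $d=\dim S$), the Künneth/covering-space reasoning shows $G_1/K$ is itself simply connected, and $M\cong\RR^n$ if and only if $G_1/K\cong\RR^{n-d}$. Thus the ``only if'' direction: if $M$ is diffeomorphic to Euclidean space, then the factor $G_1/K$ is a closed (in the sense of being a retract) contractible manifold, in fact $G_1/K$ is diffeomorphic to Euclidean space because a factor of a Euclidean space in a product decomposition $\RR^n\cong X\times\RR^d$ with $X$ a manifold must be Euclidean (this follows since $X$ is then a contractible open manifold which is, up to the stabilization, Euclidean; more simply, $X\cong \RR^n/\!\!\sim$ is homeomorphic to $\RR^{n-d}$ by the structure of $S$ as a bundle and the fact that $G/K$ retracts onto $G_1/K$). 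For the ``if'' direction: if $G_1/K\cong\RR^{n-d}$, then $M\cong \RR^{n-d}\times\RR^d=\RR^n$ directly from the product decomposition.

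The key technical point to make precise is the global product structure of $G/K$, i.e.\ that the bundle $S\hookrightarrow G/K\to G_1/K$ is trivial as a smooth fiber bundle. I would establish this by noting $N$ is a closed normal subgroup with $G/N\cong U=G_1A$, so $G/K\to (G_1A)/K$ is an $N$-bundle; since $N$ is simply-connected nilpotent it is diffeomorphic to its Lie algebra, and the bundle is trivial because structure-group reduction to $\Aut(N)$ combined with contractibility of $N$ and the fact that $G_1/K$ admits a global section lifting argument (or simply because vector bundles of this exponential type over any base are trivial once one uses that $N$ is a vector group after a polynomial change of coordinates). Similarly $(G_1A)/K\to G_1/K$ is a principal/associated $A$-bundle with $A\cong\RR^{\dim A}$, again trivial. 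Composing gives the desired global product.

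The main obstacle I anticipate is being careful that the diffeomorphism respects the factor $G_1/K$ rather than merely giving an abstract homotopy equivalence — in particular, ensuring that ``$M\cong\RR^n$ iff $G_1/K\cong\RR^{\dim M-\dim S}$'' really is an iff and not just a one-directional implication. The subtle direction is extracting ``$G_1/K$ is diffeomorphic to Euclidean space'' from ``$M$ is''; here I would lean on the triviality of the $S$-bundle established above, which reduces the claim to the elementary fact that if $\RR^n\cong X\times\RR^d$ smoothly, then $X\cong\RR^{n-d}$ (true since $X$ is then a simply-connected, contractible smooth manifold that is also a smooth retract of Euclidean space of the correct dimension, hence diffeomorphic to $\RR^{n-d}$ by the structure of the product). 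All of this is standard once the global product structure is in place, so the corollary follows.
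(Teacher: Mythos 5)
Your overall strategy is the same as the paper's: establish the smooth product decomposition $G/K\cong (G_1/K)\times S$ with $S=AN$ a simply-connected solvable (hence contractible, Euclidean) group, and then read off the corollary. The ``if'' direction of your argument is fine. But the ``only if'' direction rests on a claim that is false in general: it is \emph{not} true that a smooth manifold $X$ with $X\times\RR^d\cong\RR^n$ must be diffeomorphic (or even homeomorphic) to $\RR^{n-d}$. The Whitehead manifold $W$ is a contractible, simply-connected open $3$-manifold with $W\times\RR\cong\RR^4$, yet $W$ is not homeomorphic to $\RR^3$. So ``simply-connected, contractible, and a smooth retract of Euclidean space of the correct dimension'' does not imply ``diffeomorphic to Euclidean space,'' and your parenthetical justification of this step does not go through.

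The way to close the gap is to use the specific structure of the factor $G_1/K$ rather than a general topological fact about factors of $\RR^n$. Here $G_1$ is semisimple and the isotropy $K$ is compact, so after conjugating $K$ into a maximal compact subgroup $K_{\max}$ of $G_1$, the fibration $K_{\max}/K\hookrightarrow G_1/K\to G_1/K_{\max}$ has contractible base $G_1/K_{\max}\cong\RR^m$ and is therefore trivial: $G_1/K\cong\RR^m\times\left(K_{\max}/K\right)$ with $K_{\max}/K$ a closed manifold. If $M\cong\RR^n$, then $G_1/K$ is a retract of $\RR^n$ and hence contractible, so the closed manifold $K_{\max}/K$ is contractible and must be a point; thus $K=K_{\max}$ and $G_1/K\cong\RR^m$. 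With this substitution your argument is complete and agrees with the paper's (which treats the corollary as immediate from the product decomposition).
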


It is important to notice that Theorem \ref{structure} does not state that the orbits of $Z(U)$ and $G_1$ are orthogonal at $p$. In other words, it is not known whether $\zg(\ug) \perp \ggo_1$ (where $\ggo_1= \Lie(G_1) = [\ug,\ug]$). This would be the most natural result to expect, since it would imply that there is a Levi decomposition $G = G_1 \ltimes S$ which is adapted to the geometry of $(M^n,g)$, in the sense that the orbits of $G_1$ and $S$ at $p$ are orthogonal. In what follows we prove that in fact one always has this nicer structure.   





\begin{theorem}\label{thm_lemadimn}
Let $(M,g)$ be a simply-connected homogeneous Einstein space of negative scalar curvature, and consider for it the presentation $G/K$ given in Theorem \ref{structure}. Then, $\zg(\ug)$ is orthogonal to $\ggo_1$. 
\end{theorem}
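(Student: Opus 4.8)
The plan is to exploit the compatibility conditions (iv) and (v) of Theorem \ref{structure}, together with the fact that elements of $\zg(\ug)$ act on $\ngo$ as symmetric derivations. Write $\zg = \zg(\ug)$, and decompose $\ggo_1 = \kg \oplus \hg_1$ with $\hg_1 = \hg \cap \ggo_1$. Since $U = G_1 A$ is reductive with $G_1 = [U,U]$ semisimple and $A = Z(U)$, we have $\ug = \ggo_1 \oplus \zg$ as Lie algebras (a direct sum of ideals), so $[\ggo_1,\zg] = 0$. The subtlety is purely metric: the inner product $\ip$ on $\pg$ (equivalently, on $\hg$) need not make $\hg_1$ orthogonal to $\zg$. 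Let me write $P \colon \zg \to \hg_1$ for the map recording the non-orthogonality, i.e.~for $Z \in \zg$ decompose $Z = Z_{\perp} + PZ$ with $Z_\perp \perp \hg_1$ and $PZ \in \hg_1$; the goal is to show $P = 0$.

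The main idea is a trace/integration-by-parts argument using the Einstein-type equation \eqref{eqRicU/K} on $U/K$. First I would recall the standard formula for the Ricci curvature of a homogeneous space with a reductive decomposition, applied to $U/K$: for $Y \in \hg$ it has the schematic form $\ricci_{U/K}(Y,Y) = -\unm \sum_i |[Y,X_i]_{\hg}|^2 - \unm B_\ug(Y,Y) + (\text{lower-order mean-curvature and bracket terms})$, where $\{X_i\}$ is an orthonormal basis of $\hg$ and $B_\ug$ is the Killing form. Because $\zg$ is central in $\ug$, the bracket terms $[Z,\cdot]$ vanish identically for $Z \in \zg$, and $B_\ug(Z,\cdot) = 0$ as well; hence the intrinsic contribution to $\ricci_{U/K}(Z,Z)$ collapses dramatically and is controlled entirely by how $Z$ ``leaks'' into $\hg_1$ via $P$. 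On the other hand, by (v) we have $\theta(Z) = \theta(Z)^t$ for $Z \in \zg$, so $\tr(S(\theta(Z))^2) = \tr(\theta(Z)^2) \geq 0$ with equality iff $\theta(Z) = 0$. Plugging $Y = Z$ into \eqref{eqRicU/K} then gives $c\,|Z|^2 + \tr(\theta(Z)^2) = \ricci_{U/K}(Z,Z) = (\text{a nonpositive quadratic expression in } PZ)$, where the left side, since $c < 0$, is bounded above by a term involving $|Z_\perp|^2$ plus a nonnegative piece. The strategy is to sum this identity over an orthonormal basis of $\zg$ and compare with the global constraints: the mean curvature relation $\langle H, X\rangle = \tr(\ad X)$ forces $H \in \zg \oplus (\text{nilradical directions})$ up to the metric twist, and the moment-map condition \eqref{eqmmtheta} relates $\sum_i[\theta(Y_i),\theta(Y_i)^t]$ to the action of $\hg_1$-components. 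Carefully bookkeeping these should yield that the only consistent possibility is $PZ = 0$ for all $Z$, i.e.~$\zg \perp \ggo_1$.

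The hard part will be pinning down the exact sign and structure of the ``error terms'' produced by a nonzero $P$ in the Ricci formula for $U/K$ — in particular, showing they are genuinely nonpositive and cannot be cancelled by the $\tr(\theta(Z)^2)$ contribution. This likely requires using that $G_1/K$ itself carries a metric with its own Ricci identity (the restriction of \eqref{eqRicU/K} to $\ggo_1$), so that the cross-terms between $\zg$ and $\hg_1$ appear on both sides and can be matched. A cleaner route, which I would pursue in parallel, is to change the reductive complement: replace $\hg$ by $\hg' := \hg_1 \oplus \zg$ arranged so that the decomposition is $\ip$-orthogonal only after we prove the theorem — but one can instead argue directly that any $\Ad(K)$-invariant decomposition with $[\hg_1,\zg]=0$ and $B_\ug(\hg_1,\zg)=0$ must, under the Einstein constraint, be forced orthogonal, by testing \eqref{eqRicU/K} on mixed vectors $Z + W$ with $Z\in\zg$, $W\in\hg_1$ and extracting the bilinear (cross) term, whose vanishing is exactly $\langle Z, W\rangle \cdot(\text{something nonzero})$. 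The nonvanishing of that ``something'' — which should be built from $c$ and $\tr(\theta(Z)\theta(W))$ — is the crux, and I expect it to follow from $c<0$ together with positivity of the symmetric part in (v).
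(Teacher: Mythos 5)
Your general direction is the right one: the theorem does follow by computing the $\zg(\ug)$--$\ggo_1$ cross terms of the Ricci curvature of $U/K$ in two ways (once from the structural identity \eqref{eqRicU/K}, once from the explicit formula for a unimodular homogeneous space) and comparing. However, there is a genuine gap at what you yourself call the crux. You expect the cross term coming from the right-hand side of \eqref{eqRicU/K} to be of the form $c\,\langle Z,W\rangle + \tr S(\theta(Z))S(\theta(W))$ with the whole coefficient ``nonzero''; in fact the argument only closes because the second summand \emph{vanishes identically} for $Z\in\zg(\ug)$, $W\in\ggo_1$, and your proposal neither states nor proves this. The vanishing is not formal: one must use condition (v) to see that $S(\theta(\zg(\ug)))$ is a commuting family of symmetric operators commuting with all of $\theta(\ug)$, decompose $\ngo$ into its common weight spaces $\ngo=\ngo_1\oplus\cdots\oplus\ngo_l$, observe that $\theta|_{\ggo_1}$ and its transpose preserve each $\ngo_k$, and then invoke tracelessness of representations of the semisimple algebra $\ggo_1$ to get $\tr S(\theta(W))S(\theta(Z))=\sum_k\alpha_k(Z)\,\tr\theta^k_{\ggo_1}(W)=0$. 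Without this lemma the sign comparison you envision cannot be carried out, since the term $\tr S(\theta(Z))S(\theta(W))$ has no a priori sign.

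Two further points. First, your primary route (evaluating \eqref{eqRicU/K} on the diagonal at $Z\in\zg(\ug)$ and summing over a basis) does not close: for central $Z$ the explicit formula gives $\ricci_{U/K}(Z,Z)=\tfrac14\sum_{i,j}\langle[X_i,X_j]_{\hg},Z\rangle^2\ge 0$, which is \emph{nonnegative}, not nonpositive as you assert, and the resulting identity $c\|Z\|^2+\tr\theta(Z)^2=(\text{nonnegative})$ forces nothing, because the left side has indefinite sign. Second, the mechanism is not ``$\langle Z,W\rangle\cdot(\text{something nonzero})=0$'' but a squeeze: taking $Y$ in the orthogonal complement of $\hg_1$ inside $\hg$ and writing $Y=Y_1+Y_\zg$ with $Y_1\in\hg_1$, $Y_\zg\in\zg(\ug)$, the structural identity plus the vanishing lemma give $\Ricci_{U/K}(Y_1,Y_\zg)=c\langle Y_1,Y-Y_1\rangle=-c\|Y_1\|^2\ge 0$, while the explicit formula gives $\Ricci_{U/K}(Y_1,Y_\zg)=-\tfrac14\sum_{i,j}\langle[X_i,X_j]_{\hg},Y_1\rangle^2\le 0$ (using that $[X_i,X_j]_{\hg}\in\hg_1\perp Y$ and that $Y_\zg$ is central); equality then forces $Y_1=0$. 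You would need to supply both the vanishing lemma and this precise sign bookkeeping for the proof to be complete.
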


\begin{remark}\label{rmkn1}
If furthermore one has that $\theta|_{\ggo_1} = 0$, then $G/K$ is isometric to a Riemannian product $G_1/K \times S$ of Einstein homogeneous spaces of negative scalar curvature. Notice that condition $\theta|_{\ggo_1} = 0$ is trivially satisfied when $\dim \ngo = 1$.
\end{remark}

\begin{proof}
Following the notation from Remark \ref{remarks}, \eqref{remarkinfinitesimal}, equation \eqref{eqRicU/K} may be rewritten as an equation for endomorphisms of $\hg \simeq T_{eK} U/K$ as
\begin{equation}\label{eqn_RicUK}
	\Ricci_{U/K} = c \cdot I + C_\theta.
\end{equation}
Here, $\Ricci_{U/K} \in \End(\hg)$ denotes the Ricci operator of the homogeneous space $(U/K, g_{U/K})$, and $C_\theta \in \End(\hg)$ is the symmetric endomorphism given by
\[
\langle C_\theta X, Y \rangle = \tr S(\theta(X))S(\theta(Y)), \qquad X,Y\in \hg.
\]
Since $\theta$ is defined on $\ug$ and not just on $\hg$, we may of course extend $C_\theta$ to a symmetric endomorphism of $\ug$, where $C_\theta (\kg) = 0$ (recall that the action of the isotropy is by skew-symmetric operators).

We have $\theta: \ug \to \End(\ngo)$, and by Theorem \ref{structure} $S(\theta(\zg(\ug)))$ is a family of pairwise commuting, symmetric operators in $\End(\ngo)$, which commute also with all of $\theta(\ug)$ (recall that $\theta$ is a Lie algebra representation). We may thus consider an orthogonal decomposition of $\ngo$ into common eigenspaces for the family $S(\theta(\zg(\ug)))$ (i.e.~ a weight-space decomposition):
\begin{equation}\label{rootdec}
\ngo =  \ngo_1 \oplus \ldots \oplus \ngo_l,
\end{equation}
with $\alpha_1, \ldots, \alpha_l \in \zg(\ug)^*$ the corresponding weights. The restricted representation $\theta_{\ggo_1} = \theta|_{\ggo_1}: \ggo_1 \to \End(\ngo)$ must preserve this weight-space decomposition. For each $k = 1,\ldots,l$ we haveß a \emph{co-restricted} representation of the semisimple Lie algbera $\ggo_1$, given by
\[
\theta_{\ggo_1}^k := \pi_k \circ \theta |_{\ggo_1} : \ggo_1 \to \End(\ngo_k),
\]
where $\pi_k:\ngo \to \ngo_k$ is the orthogonal projection. Observe that, in particular, $\theta_{\ggo_1}^k(Y)$ is traceless for each $Y\in \ggo_1$ and $k=1,\ldots,l$.

Now we claim that for $Y\in \ggo_1$, $X\in \zg(\ug)$ one has that $\la C_\theta X, Y\ra = 0$. Indeed, using the orthogonality of the decomposition \eqref{rootdec}, and the fact that it is preserved by $\theta(\ug)$, we obtain
\begin{align*}
    \left\langle C_\theta \, Y, X \right\rangle &= \sum_{k=1}^l \tr \left( S\left(\theta_{\ggo_1}^k (Y)\right) \left(\alpha_k(X) \cdot I\right)\right)  = \sum_{k=1}^l \alpha_k(X) \, \tr \theta_{\ggo_1}^k(Y) = 0.
\end{align*}


Consider in $U/K$ the reductive decomposition $\ug = \kg \oplus \hg$. We may also assume that $\ggo_1 = \kg \oplus \hg_1$ is a reductive decomposition for $G_1/K$, where $\hg_1 \subseteq \hg$. Let $\qg$ be the orthogonal complement of $\hg_1$ in $\hg$, and let us show that $\qg = \zg(\ug)$. To that end, take $Y \in \qg$ and write it as $Y = Y_1 + Y_\zg$, where $Y_1 \in \hg_1$, $Y_\zg \in \zg(\ug)$. We now look at the Ricci curvature in the directions $Y_1$, $Y_\zg$. First, by~ \eqref{eqn_RicUK} and the above claim we obtain
\[
    \Ricci_{U/K}(Y_1, Y_\zg) = c \, \langle Y_1, Y_\zg\rangle = c\, \langle Y_1, Y-Y_1\rangle = - c\, \| Y_1\|^2 \geq 0,
\]
since $c<0$. On the other hand, we use that $Y_\zg \in \zg(\ug)$, $Y\perp [\ug,\ug]$, and the explicit formula for the Ricci curvature in the unimodular case (see \cite[7.38]{Bss}), to get
\begin{align*}
    \Ricci_{U/K}(Y_1, Y_\zg) =& -\unm\sum_{i,j}\langle [Y_1, X_i]_{\hg}, X_j \rangle \langle [Y_\zg, X_i]_{\hg}, X_j \rangle  \\
            & + \unc \sum_{i,j} \langle [X_i,X_j]_{\hg}, Y_1\rangle \langle [X_i,X_j]_{\hg}, Y_\zg\rangle - \unm \tr \ad_\ug Y_1 \ad_\ug Y_\zg \\
            = &\, \unc \sum_{i,j} \langle [X_i,X_j]_{\hg}, Y_1\rangle \langle [X_i,X_j]_{\hg}, Y - Y_1\rangle  \\
            = & -\unc \sum_{i,j} \langle [X_i,X_j]_{\hg}, Y_1\rangle^2 \leq 0,
\end{align*} where $\{ X_i\}$ is an orthonormal basis for $\hg.$
Hence, we must have equality, and $Y_1 = 0$. Therefore, $\qg = \zg(\ug)$, and the proof is finished.
\end{proof}

\begin{remark}\label{rmk_centerorthogonal}
The previous theorem holds more generally for expanding homogeneous Ricci solitons. More precisely, if $(M^n, g)$ is an expanding (i.e. $c<0$) homogeneous Ricci soliton, and $G$ is the full isometry group, then by \cite{Jbl} the soliton is \emph{semi-algebraic}. Therefore by \cite{alek} the homogeneous space $G/K$ satisfies all the nice properties stated in Theorem \ref{structure}, but possibly without the additional conditions proven in \cite{JblPtr} for Einstein spaces (namely, $G_1$ might have compact simple factors, and the action of $\zg(\ug)$ on $\ngo$ might not be by symmetric endomorphisms). Nevertheless, Lemma 3.5 from \cite{JblPtr} still assures that by the compatibility condition \eqref{eqmmtheta} one has that the family $\theta(\zg(\ug)) \subset \End(\ngo)$ consists of normal operators, whose transposes commute with all of $\theta(\ug)$. Thus, one can also consider the decomposition \eqref{rootdec} as in the proof of Theorem~ \ref{thm_lemadimn}, and proceed in exactly the same way to conclude that $\zg(\ug) \perp \ggo_1$.
\end{remark}

 As a quick application we get an alternative proof of the following result of Jablonski \cite{Jbl13b}.

\begin{corollary}\label{cor_algebraic}
Homogeneous Ricci solitons are algebraic.
\end{corollary}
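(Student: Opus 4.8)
The goal is to prove that homogeneous Ricci solitons are algebraic, and the strategy is to bootstrap from the already-established semi-algebraic structure. Recall that a homogeneous Ricci soliton $(M^n, g)$ with respect to its full isometry group $G$ is called \emph{semi-algebraic} if the soliton vector field $X$ can be taken so that its flow consists of automorphisms of $G$ composed with left translations; it is called \emph{algebraic} if moreover, at the infinitesimal level, the Ricci operator $\Ricci$ decomposes as $\Ricci = c\, I + D|_{\pg}$ where $D \in \Der(\ggo)$ and $\pg$ is the reductive complement. By a theorem of Jablonski (used already in Remark \ref{rmk_centerorthogonal}), expanding homogeneous Ricci solitons are semi-algebraic; the steady and shrinking cases are not genuinely at issue, since steady homogeneous Ricci solitons are flat (hence trivially algebraic) and shrinking homogeneous Ricci solitons are trivial by work of Naber/Petersen--Wylie--type results, being isometric to products of Einstein manifolds with Gaussian solitons. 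So the real content is the expanding case.

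\textbf{Key steps.} First I would invoke the structural dichotomy: by Remark \ref{rmk_centerorthogonal}, for an expanding homogeneous Ricci soliton the presentation $G/K$ satisfies conditions (i)--(v) of Theorem \ref{structure} (with the caveats that $G_1$ may have compact factors and $\zg(\ug)$ may act non-symmetrically, neither of which matters here), \emph{and} additionally one has the orthogonality $\zg(\ug) \perp \ggo_1$. The second step is to observe that once $\zg(\ug) \perp \ggo_1$ holds, the Levi decomposition $G = G_1 \ltimes S$ (with $S = AN$ solvable) is adapted to the metric: the orbits of $G_1$ and of $S$ through $p$ are mutually orthogonal at $p$. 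Third, one analyzes each factor separately. On the solvable side $S = AN$, the induced metric is itself a solvsoliton (this is where conditions (iii), (iv), (v) feed in: the nilpotent part is a nilsoliton by (iii), and the Einstein-like equation \eqref{eqRicU/K} together with the compatibility \eqref{eqmmtheta} forces the derivation governing the $A$-action to be the symmetric part of the natural mean-curvature derivation), and solvsolitons are known to be algebraic by Lauret's work. On the semisimple side, the metric on $G_1/K$ is Einstein with negative scalar curvature by \eqref{eqRicU/K} read on $\ggo_1$ — here one uses the claim proved inside the proof of Theorem \ref{thm_lemadimn}, namely $\langle C_\theta X, Y\rangle = 0$ for $X \in \zg(\ug)$, $Y \in \ggo_1$, which shows that the ``extra'' term $C_\theta$ restricted to $\hg_1$ does not couple to $\zg(\ug)$ — and homogeneous Einstein spaces are algebraic (their Ricci operator is $c\,I$, manifestly of the required form). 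Finally, one assembles the two pieces: the direct-sum derivation $D = D_{\ggo_1} \oplus D_{\ggo} \in \Der(\ggo)$ built from the factors realizes $\Ricci = c\, I + D|_{\pg}$ on all of $\pg = \hg_1 \oplus \hg_{\qg} \oplus \ngo$, using that $\Ricci$ has no cross terms between $\ggo_1$-directions and $S$-directions (orthogonality of orbits plus the vanishing of $C_\theta$ off the diagonal blocks).

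\textbf{The main obstacle.} The delicate point is verifying that the off-diagonal (i.e.\ mixed $\ggo_1$-versus-$S$) blocks of the Ricci operator truly vanish, so that the soliton derivation can be taken block-diagonal; this is precisely the role of the new refinement $\zg(\ug) \perp \ggo_1$ together with the $C_\theta$-orthogonality claim, and it is exactly the ingredient that was missing from the purely semi-algebraic picture. A secondary technical nuisance is bookkeeping the reductive complements consistently — matching the reductive decomposition $\ggo = \kg \oplus \pg$ of $G/K$ with those of $G_1/K$ and of $U/K$ — but Remark \ref{remarks}\eqref{remarkinfinitesimal} already sets up the compatible choices, so this is routine. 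One should also be slightly careful that ``algebraic'' is a statement about the \emph{full} isometry group $G$: since Theorem \ref{structure}'s presentation is with respect to a transitive group of isometries which one may take to be (a cover of) a subgroup of the full isometry group, and since the algebraic condition, once established for one nice presentation, is known to pass to the full isometry group for semi-algebraic solitons, this causes no trouble — but it is worth a sentence.
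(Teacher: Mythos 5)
Your opening moves match the paper's: reduce to the expanding case, invoke semi-algebraicity, and use Remark \ref{rmk_centerorthogonal} to get $\zg(\ug)\perp\ggo_1$. But from there you take a route that does not work as stated. Your plan treats $G/K$ as if it split into a solvable piece $S=AN$ and a semisimple piece $G_1/K$ that can be handled separately and then reassembled. That splitting fails in general: the orbits of $G_1$ and $S$ are orthogonal \emph{at the point} $p$, but the metric is a Riemannian product only when $\theta|_{\ggo_1}=0$ (Remark \ref{rmkn1}). When $G_1$ acts nontrivially on $N$, (a) the Ricci operator of $G/K$ restricted to $\hg_1$ is not the Ricci operator of $G_1/K$; (b) $(G_1/K, g_{G_1/K})$ is \emph{not} Einstein --- equation \eqref{eqRicU/K} carries the term $C_\theta|_{\hg_1}$, which is generally nonzero (only the cross terms $\la C_\theta X,Y\ra$ with $X\in\zg(\ug)$, $Y\in\ggo_1$ vanish, which is what the proof of Theorem \ref{thm_lemadimn} shows); and (c) a block-diagonal endomorphism built from derivations of $\ggo_1$ and of $\sg$ need not be a derivation of $\ggo=\ggo_1\ltimes\sg$, since compatibility with the mixed brackets $[\ggo_1,\ngo]$ is an extra condition you never verify. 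So the assembly step at the end of your argument has a genuine hole.

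The paper's actual proof is much shorter and avoids all of this. It observes that the mean curvature vector $H\in\hg$ is \emph{always} orthogonal to $\ggo_1=[\ug,\ug]$ (representations of semisimple algebras are traceless), so the new orthogonality $\zg(\ug)\perp\ggo_1$ forces $H\in\zg(\ug)$, whence $\ad H|_\hg=0$ and in particular $S(\ad H|_\hg)=0$. Proposition 4.14 of \cite{alek} then states precisely that a semi-algebraic soliton with $S(\ad H|_\hg)=0$ is algebraic. If you want to keep your constructive strategy you would essentially have to reprove that proposition; the missing ingredient in your write-up is any mechanism for producing the soliton derivation on the mixed part $[\ggo_1,\ngo]$ of the bracket, which is exactly what the mean-curvature argument sidesteps.
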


\begin{proof}
As is well-known, the only non-trivial examples (that is, not locally isometric to the product of an Einstein homogeneous space and a flat factor $\RR^k$) occur in the expanding case (see the discussion in \cite[\S 2]{solvsolitons} and the references therein for more details). Let $(M,g)$ be an expanding homogeneous Ricci soliton. For the presentation $G/K$, where $G$ is the full isometry group, we have by Theorem \ref{thm_lemadimn} and Remark \ref{rmk_centerorthogonal} that $\zg(\ug) \perp \ggo_1$. Now recall that the mean curvature vector $H \in \hg \subset \ug$ is always orthogonal to $\ggo_1 = [\ug,\ug]$, since any representation of a semi-simple Lie algebra consists of traceless endomorphisms. Thus, $H \in \zg(\ug)$, and in particular 
\[
	S(\ad H|_\hg) = 0.
\]
By applying Proposition 4.14 from \cite{alek} we conclude that the soliton is indeed algebraic.
\end{proof}

Another application of our new structural results is the reduction of the classification problem (in the non-unimodular case) to the so called ``rank one'' case (cf.~ \cite[Theorem D]{Heb}). 

\begin{corollary}\label{cor_rankone}
Let $(M^n, g)$, $G/K$ be as in Theorem \ref{thm_lemadimn}, with $G$ non-unimodular. Consider $U_0$, $G_0$ the connected Lie subgroups of $U$, $G$ with Lie algebras $\ug_0 := [\ug,\ug]\oplus \RR H \subset \ug$, $\ggo_0 := \ug_0 \oplus \ngo$, respectively. Then, there is a diffeomorphism
\[
	M^n \simeq \RR^a \times G_0 / K, \qquad a = \dim Z(U) - 1,
\]
and the induced $G_0$-invariant metric on $G_0/K$ is Einstein with the same Einstein constant $c<0$ as $g$.
\end{corollary}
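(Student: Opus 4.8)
The plan is to peel off from $G/K$ the directions in $Z(U)$ that do not contribute to the ``rank one'' part, and to check that the leftover piece $G_0/K$ is still Einstein with constant $c$. By Theorem \ref{thm_lemadimn} we have the orthogonal splitting $\hg = \hg_1 \oplus \zg(\ug)$, where $\hg_1$ is the reductive complement for $G_1/K$ inside $\ug$. The mean curvature vector $H$ lies in $\hg$ and is orthogonal to $\ggo_1 = [\ug,\ug]$ (any representation of a semisimple Lie algebra is traceless), so in fact $H \in \zg(\ug)$, and $H \neq 0$ since $G$ is non-unimodular (Remark \ref{remarks}\,\eqref{rmkDotti} and the paragraph after it). Thus we may split $\zg(\ug) = \RR H \oplus \ag_1$ orthogonally, where $\ag_1$ has dimension $a = \dim Z(U) - 1$, and correspondingly $\ug = \ug_0 \oplus \ag_1$ with $\ug_0 = \ggo_1 \oplus \RR H$ as in the statement. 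Because $\ag_1 \subset \zg(\ug)$ acts on $\ngo$ by symmetric derivations (Theorem \ref{structure}(v)) and commutes with all of $\theta(\ug)$, the whole data $(\theta, g_N)$ is compatible with this decomposition; in particular the ideal $\ggo_0 = \ug_0 \ltimes_\theta \ngo$ is well defined and the induced metric on $G_0/K$ makes sense.

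Next I would establish the diffeomorphism. Since $\ag_1$ is central in $\ug$ and acts trivially on $\ngo$ as inner derivations of $\ggo$ — one must check $[\ag_1, \ngo] = 0$, which follows from $\ag_1 \subset \zg(\ug)$ together with the fact that $\theta(\zg(\ug))$ consists of \emph{symmetric} operators while, by unimodularity considerations inside the relevant subgroup, these central elements must act by traceless ones too, hence by zero (this is the content of the last assertion one needs and should be spelled out carefully) — the subgroup with Lie algebra $\ag_1$ is a central $\RR^a$ factor of $G$ intersecting $K$ trivially, so $G \simeq \RR^a \times G_0$ as manifolds and $G/K \simeq \RR^a \times G_0/K$. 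This uses the simply-connectedness implicit in the setting of Theorem \ref{thm_lemadimn}.

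It remains to verify that $(G_0/K, g_{G_0/K})$ is Einstein with constant $c$. For this I would re-run the structure theorem on $G_0/K$: it has the presentation $G_0 = (G_1 A_0)\ltimes N$ with $A_0 = \exp(\RR H)$, the orbit orthogonality (ii) is inherited, $g_N$ is unchanged so (iii) holds, and the infinitesimal action $\theta_0 = \theta|_{\ug_0}$ still satisfies the compatibility condition \eqref{eqmmtheta} because dropping the $\ag_1$-directions from the orthonormal basis of $\ug$ only removes terms $[\theta(Y),\theta(Y)^t]$ with $Y\in\ag_1$, each of which vanishes since $\theta(\ag_1)$ is symmetric; similarly $\theta_0(Z)$ is symmetric for $Z\in\zg(\ug_0)=\RR H$. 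The crucial point is condition (iv): the endomorphism $C_{\theta_0}$ on $\hg_0 = \hg_1 \oplus \RR H$ agrees with the restriction of $C_\theta$, because $\theta$ and $\theta_0$ coincide on $\ug_0$; and the Ricci operator of $U_0/K$ relates to that of $U/K$ via the splitting-off of the flat central factor $\ag_1$, which contributes nothing to curvature and, by the claim proved inside Theorem \ref{thm_lemadimn} ($\langle C_\theta X, Y\rangle = 0$ for $X\in\zg(\ug)$, $Y\in\ggo_1$) together with $\Ricci_{U/K} = cI + C_\theta$, is orthogonal to $\hg_0$ in the Ricci form as well. Hence $\Ricci_{U_0/K} = c\,I + C_{\theta_0}$ on $\hg_0$, which is exactly equation \eqref{eqn_RicUK} for $G_0/K$, and the converse part of Theorem \ref{structure} yields that $G_0/K$ is Einstein with constant $c$.

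I expect the main obstacle to be the careful bookkeeping showing that $\ag_1$ really splits off isometrically \emph{and} that no Ricci curvature leaks between $\hg_0$ and $\ag_1$ — one must use both the symmetry of $\theta(\zg(\ug))$ and the vanishing-trace claim from the proof of Theorem \ref{thm_lemadimn}, and confirm that the central $\RR^a$ is a genuine metric product factor of $U/K$ (equivalently, that $\ad(\ag_1)|_\hg = 0$ and $\ag_1 \perp [\ug,\hg]$), rather than merely a topological one. Once that is in place, verifying $(i)$–$(v)$ for $G_0/K$ and invoking the converse of Theorem \ref{structure} is routine.
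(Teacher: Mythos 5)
There is a genuine gap at the very step you flag as the ``main obstacle''. You argue that for $X\in\ag_1=\zg(\ug)\cap H^{\perp}$ one has $\theta(X)=0$, on the grounds that $\theta(X)$ is symmetric and traceless, ``hence zero''. A symmetric traceless operator need not vanish (e.g.\ $\diag(1,-1)$), and in fact the paper points out in Section 4 that $\theta|_{\zg(\ug)}$ is \emph{faithful} (its kernel would lie in the nilradical), so $\theta(X)\neq 0$ for every nonzero $X\in\zg(\ug)$. Consequently $\ag_1$ is not a central ideal of $\ggo$, the group $G$ is not a direct product $\RR^a\times G_0$, and $U/K$ is not a Riemannian product with a flat $\RR^a$ factor --- think of real hyperbolic space presented as $\RR\ltimes\RR^{n-1}$, where the one-dimensional abelian part acts by nonzero symmetric operators. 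Everything downstream of this claim (the product splitting of the metric, the ``no Ricci leakage'' discussion, and the re-verification of conditions (i)--(v) for $G_0/K$ via the converse of Theorem \ref{structure}) therefore rests on a false premise.

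The paper's proof avoids this entirely: for $X\in\zg(\ug)$ with $X\perp H$, the hyperplane $\{X\}^{\perp}$ is a codimension-one \emph{ideal} of $\ggo$ containing $H$ and $\kg$ (this uses $\zg(\ug)\perp\ggo_1$ from Theorem \ref{thm_lemadimn} and $\ug\perp\ngo$), and a Ricci comparison formula gives
$\ricci_{G/K}|_{\tilde\pg}=\ricci_{\widetilde G/K}+\unm[A,A^t]$ with $A=\ad X|_{\tilde\pg}$. The correction term vanishes because $A$ is \emph{symmetric} --- not because it is zero --- so the codimension-one orbit is Einstein with the same constant, and the diffeomorphism $G/K\simeq\RR\times\widetilde G/K$ comes from simple-connectedness rather than from any group-theoretic product structure. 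Iterating $a$ times gives the corollary. If you want to salvage your approach, you must replace the ``traceless symmetric $\Rightarrow$ zero'' step with such a curvature comparison argument for one-dimensional reductions.
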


\begin{proof}
Recall the following formula for the Ricci curvature of a homogeneous space, whose proof follows immediately from the proof of Proposition 6.1 in \cite{alek}.
\begin{lemma}
Let $(G/K,g)$ be a Riemannian homogeneous space with reductive decomposition $\ggo = \kg \oplus \pg$, and assume there exists $X\in \pg$ such that $[H,X] = 0$, and the subspace $\tilde\ggo := \{X\}^\perp$ is a codimension-one ideal of $\ggo$ that contains $H$ and $\kg$. Let $\widetilde G$ be the connected Lie subgroup of $G$ with Lie algebra $\tilde\ggo$, and consider the induced metric on the orbit $\widetilde G \cdot (eK) \simeq \widetilde G / K$. Then, the corresponding Ricci operators satisfy
\[
    \ricci_{G/K} |_{ \, \, \tilde \pg} = \ricci_{\widetilde G / K} + \unm \left[A,A^t\right],
\]
where $\tilde \pg = \pg \cap \tilde\ggo$ and $A := \ad X|_{\tilde \pg} \in \End(\pg)$.
\end{lemma}

Theorem \ref{thm_lemadimn} implies that $H\in \zg(\ug)$, and that any $X \in \zg(\ug)$ with $X \perp H$ satisfies the conditions of the above lemma. Moreover, the corresponding endomorphism $A$ is symmetric by Theorem~ \ref{structure},~ (v), thus the term $\unm [A,A^t]$ in the formula vanishes. By applying the lemma to any such $X$ we obtain a codimension-one submanifold $\tilde G / K$ in $G/K$ which with the induced metric is Einstein, with the same Einstein constant as $G/K$. Since the spaces are simply-connected, as differentiable manifolds we have that $G/K \simeq \RR \times \tilde G/ K$. After applying this procedure $a$ times, where $a = \dim Z(U)-1$, the corollary follows.
\end{proof}

To conclude this section we prove the following simple but useful formula for the Ricci curvature of a homogeneous space, which is in some way a generalization of \cite[Lemma 2.3]{Mln}.

\begin{lemma}\label{lem_formulaRicci}
Let $(U/K,g)$ be a Riemannian homogeneous space with $U$ a unimodular Lie group, and consider a reductive decomposition $\ug = \kg \oplus \mg$. If $X, Y\in \mg$ are such that $[\kg,X]= [\kg,Y] = 0$, then
\[
     \Ricci(X,Y) =  \unc \sum_{i,j}  \langle [X_i,X_j]_\mg, X\rangle \langle [X_i,X_j]_\mg, Y\rangle - \tr S(\ad_\mg X) S(\ad_\mg Y),
\]
where $\{ X_i\}$ is any orthonormal basis for $\mg$ (here, $\ad_\mg X \in \End(\mg)$ stands for the restriction of $\ad X$ to $\mg$, projected onto $\mg$). Moreover, if $Y$ is orthogonal to the commutator ideal $[\ug,\ug]$ (i.e.~ to its projection onto $\mg$), then
\[
    \Ricci(X,Y) = - \tr S(\ad_\mg X) S(\ad_\mg Y), \qquad \forall \, X\in \mg \mbox{ such that } [\kg,X]=0.
\]
\end{lemma}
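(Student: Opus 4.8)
The plan is to derive the formula from the standard expression for the Ricci curvature of a homogeneous space, namely \cite[7.38]{Bss}, by using the two hypotheses to strip away all the terms that come from the isotropy and from non-unimodularity, and then to recognize what is left as $-\tr S(\ad_\mg X)S(\ad_\mg Y)$ by elementary manipulations with traces and transposes. Recall that for a reductive decomposition $\ug=\kg\oplus\mg$ and $X\in\mg$, Besse's formula expresses $\Ricci(X,X)$ as a sum of terms of a few types: a ``quadratic'' part $-\unm\sum_i|[X,X_i]_\mg|^2$ together with $-\unm\sum_i\langle[X,[X,X_i]_\mg]_\mg,X_i\rangle$; a ``structure constant'' part $\unc\sum_{i,j}\langle[X_i,X_j]_\mg,X\rangle^2$; a term involving the mean curvature vector; and correction terms built out of the $\kg$-components $[X,X_i]_\kg$. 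The hypothesis $[\kg,X]=0$ says that $\ad X$ annihilates $\kg$, so every term containing a bracket of $X$ with an element of $\kg$ vanishes; in particular the isotropy corrections disappear. Since $U$ is unimodular, the mean curvature vector also vanishes. Hence, for $X,Y\in\mg$ with $[\kg,X]=[\kg,Y]=0$, after polarizing and using cyclicity of the trace to merge the two halves of the middle term, the formula reduces to
\[
\Ricci(X,Y)=-\unm\sum_i\langle[X,X_i]_\mg,[Y,X_i]_\mg\rangle-\unm\sum_i\langle[X,[Y,X_i]_\mg]_\mg,X_i\rangle+\unc\sum_{i,j}\langle[X_i,X_j]_\mg,X\rangle\langle[X_i,X_j]_\mg,Y\rangle .
\]

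Next I would translate the first two sums into traces of the operators $\ad_\mg X$, $\ad_\mg Y\in\End(\mg)$: fixing the orthonormal basis $\{X_i\}$ and writing $A=\ad_\mg X$, $D=\ad_\mg Y$, one has $\sum_i\langle[X,X_i]_\mg,[Y,X_i]_\mg\rangle=\tr\big(AD^t\big)$ and $\sum_i\langle[X,[Y,X_i]_\mg]_\mg,X_i\rangle=\tr(AD)$. Using $\tr(A^tD)=\tr(AD^t)$ and $\tr(A^tD^t)=\tr(AD)$ one checks that $\tr\big(S(A)S(D)\big)=\unc\tr\big((A+A^t)(D+D^t)\big)=\unm\tr(AD)+\unm\tr\big(AD^t\big)$, so the first two terms above add up exactly to $-\tr S(\ad_\mg X)S(\ad_\mg Y)$; this is the first assertion. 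For the second, observe that $[X_i,X_j]\in[\ug,\ug]$ for all $i,j$, so $[X_i,X_j]_\mg$ lies in the orthogonal projection of $[\ug,\ug]$ onto $\mg$. If $Y$ is orthogonal to $[\ug,\ug]$ then $\langle[X_i,X_j]_\mg,Y\rangle=0$ for all $i,j$, the structure-constant term drops out, and only $-\tr S(\ad_\mg X)S(\ad_\mg Y)$ survives.

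The step that requires care is the first one: one has to be sure that, under $[\kg,X]=[\kg,Y]=0$, the general formula for $U/K$ really collapses to the left-invariant-type expression displayed above, i.e. that the isotropy-correction terms are precisely of the form annihilated by the centralizing hypothesis. This is exactly what that hypothesis is for, and it cannot be dropped: already $S^2=\SO(3)/\SO(2)$ shows that in general the formula carries genuine extra contributions from the isotropy (there $[\kg,X]=0$ forces $X=0$, so the lemma says nothing). Once that reduction is in place, everything that follows — the polarization and the identity relating $\unm\big(\tr(AD)+\tr(AD^t)\big)$ to $\tr(S(A)S(D))$ — is routine linear algebra.
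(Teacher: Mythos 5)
Your argument is essentially the paper's: both start from the polarized form of Besse's formula \cite[7.38]{Bss}, kill the mean-curvature term by unimodularity and the isotropy corrections by the centralizing hypotheses, and then identify $-\unm\tr(AD^t)-\unm\tr(AD)$ with $-\tr S(A)S(D)$ via $\tr S(A)S(D)=\unm\tr(AD)+\unm\tr(AD^t)$. The only cosmetic difference is that the paper keeps the Killing-form term $-\unm B(X,Y)$ intact and observes that $[\kg,X]=[\kg,Y]=0$ forces $\tr(\ad X)(\ad Y)=\tr(\ad_\mg X)(\ad_\mg Y)$, whereas you expand that term into its $\mg$-part plus isotropy corrections from the outset; these are the same computation.

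There is one small but genuine omission in your treatment of the second assertion. Its hypotheses are only $[\kg,X]=0$ and $Y\perp[\ug,\ug]_\mg$, yet you invoke the first formula for the pair $(X,Y)$, which requires $[\kg,Y]=0$ as well. You need to check that the orthogonality hypothesis implies this: for $Z\in\kg$ the operator $\ad Z|_\mg$ is skew-symmetric by $\Ad(K)$-invariance of the metric, so for every $W\in\mg$ one has $\langle [Z,Y],W\rangle=-\langle Y,[Z,W]_\mg\rangle=0$ because $[Z,W]\in[\ug,\ug]$; since $[\kg,\mg]\subseteq\mg$, this gives $[Z,Y]=0$. The paper records exactly this step (``it is easy to see that $[\kg,Y]=0$''); with it added, your proof is complete.
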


\begin{proof}
From the formula \cite[7.38]{Bss} for the Ricci curvature of a homogeneous space, and using that $\ug$ is unimodular, we see that
\begin{align*}
    \Ricci(X,Y)  =& -\unm \sum_{i,j} \langle [X,X_i]_\mg, X_j \rangle \langle [Y,X_i]_\mg, X_j \rangle  \\
                  & + \unc \sum_{i,j}  \langle [X_i,X_j]_\mg, X\rangle \langle [X_i,X_j]_\mg, Y\rangle - \unm B(X,Y) \\
                 =& -\unm \tr \left(\ad_\mg X\right) \left(\ad_\mg Y \right)^t - \unm \tr (\ad X)(\ad Y) \\
                  & + \unc \sum_{i,j}  \langle [X_i,X_j]_\mg, X\rangle \langle [X_i,X_j]_\mg, Y\rangle,
\end{align*}
where $\{ X_i\}$ is an orthonormal basis for $\mg$. Notice that conditions $[\kg,X] = 0$ and $[\kg,Y] =~ 0$ imply that $\tr(\ad X)(\ad Y) = \tr(\ad_\mg X)(\ad_\mg Y)$. Then, the first formula follows. If moreover $Y\perp [\ug,\ug]_\mg$, then it is easy to see that $[\kg,Y]=0$, so the first formula applies, and the sum term in it disappears.
\end{proof}

\section{Semisimple transitive group}\label{semisimple}

The main purpose of this section is to prove the following
\begin{theorem}\label{thmsemisimple}
Let $G$ be a semisimple Lie group and consider a homogeneous Einstein space $\left(G/H,g\right)$ which is de Rham irreducible. Assume that $\dim G/H \leq 8$, $\dim H \geq 1$ and that $G/H \neq$ $(\Sl_2(\RR) \times \Sl_2(\CC))\slash \Delta \U(1)$. Then, $(G/H,g)$ is an irreducible symmetric space of the non-compact type.
\end{theorem}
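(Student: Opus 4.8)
The plan is to reduce the statement to a finite list of homogeneous spaces (plus a few infinite families), and then to rule out, one by one, all the non‑symmetric candidates by analysing the Einstein equation. Recall that throughout the article the Einstein constant is negative, so by Bochner's theorem $M$ is non‑compact, hence $G$ is non‑compact, and de Rham irreducibility together with $\dim G/H\geq 2$ forces $M$ to be non‑flat. In the semisimple situation the nilradical $N$ and the center $A$ in Theorem~\ref{structure} are trivial and $U=G=G_1$, so equation~\eqref{eqRicU/K} reduces to the plain Einstein condition $\Ricci_{G/H}=c\,g$ with $c<0$; moreover we may assume $G$ has no compact simple factors (Theorem~\ref{structure}; cf.\ Remark~\ref{remarks}). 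The goal thus becomes: among all non‑compact homogeneous spaces $G/H$ with $G$ semisimple without compact simple factors, $\dim H\geq 1$ and $\dim G/H\leq 8$, the only ones admitting a $G$‑invariant Einstein metric of negative scalar curvature — apart from the single excluded space $(\Sl_2(\RR)\times\Sl_2(\CC))/\Delta\U(1)$ — are the irreducible symmetric spaces of the non‑compact type (which of course do carry one).

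The first ingredient is the classification itself, recorded in Table~\ref{tabla}. I would obtain it by starting from the classification of compact homogeneous spaces $G^u/H$ of dimension at most $8$ (essentially \cite{BhmKrr}) and applying the duality procedure of \cite{Nkn1}: a compact pair $(\ggo^u,\hg)$ admits a non‑compact sibling precisely when $\ggo^u$ has a suitable real form under which $\hg$ remains a maximal compact‑type subalgebra, and one keeps track of the isotropy representation of $\hg$ on $\mg$. The result is a finite list together with a handful of infinite families — the non‑compact analogues of the Aloff--Wallach spaces and a couple of further families in dimension $8$. De Rham irreducibility excludes the obvious products (such as $\SO_0(2,2)/(\SO(2)\times\SO(2))$, or products of lower‑dimensional pieces, which are in any case covered by the lower‑dimensional results). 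Among the remaining entries, the symmetric ones are exactly the non‑compact rank‑one symmetric spaces (real, complex and quaternionic hyperbolic spaces) together with the low‑dimensional higher‑rank ones: $\Sl_3(\RR)/\SO(3)$, $\Sl_3(\CC)/\SU(3)$, $\Spe(2,\RR)/\U(2)$, $\SO_0(2,k)/(\SO(2)\times\SO(k))$ for $k=3,4$, and the split‑$G_2$ space $G_2^*/\SO(4)$.

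The heart of the proof — and the main obstacle — is then to show that none of the \emph{non‑symmetric} entries of Table~\ref{tabla} carries an invariant Einstein metric with $c<0$, once $(\Sl_2(\RR)\times\Sl_2(\CC))/\Delta\U(1)$ is set aside. A large proportion of the cases is dispatched directly by the non‑existence criteria of \cite{Nkn2}, which forbid invariant Einstein metrics on $G/H$ as soon as $\mg$ contains an invariant subspace of a prescribed type. The families in which $\Sl_2(\RR)$ is one of the simple factors of $G$ are handled uniformly by Proposition~\ref{Propsl2RxG1}. What remains — concentrated in dimensions $7$ and $8$, and including the infinite families — must be treated by hand: one parametrizes the finite‑dimensional space of $\Ad(H)$‑invariant inner products via the decomposition of $\mg$ into irreducible isotropy summands, computes $\Ricci_{G/H}-c\,g$ using the standard homogeneous Ricci formula \cite[7.38]{Bss} together with Lemma~\ref{lem_formulaRicci}, and extracts a definite‑sign obstruction — typically the strict positivity of a weighted sum of squares of structure constants measured against a negative Killing‑form contribution — showing that the Einstein system cannot be solved. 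Controlling the full moduli of invariant metrics for the highest‑dimensional entries and for the infinite families, where \cite{Nkn2} does not apply, is exactly the delicate point.

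Assembling these steps, the only de Rham irreducible homogeneous Einstein spaces of negative scalar curvature with semisimple transitive group, $\dim H\geq 1$ and $\dim G/H\leq 8$, are — apart from $(\Sl_2(\RR)\times\Sl_2(\CC))/\Delta\U(1)$ — the irreducible symmetric spaces listed in Table~\ref{tabla}; these are of the non‑compact type because $c<0$ and $M$ is de Rham irreducible, so $M$ is neither flat nor compact and has no Euclidean de Rham factor. This is precisely the assertion of the theorem.
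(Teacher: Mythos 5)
Your outline follows the paper's strategy exactly: dualize the compact classification of \cite{BhmKrr} to obtain Table~\ref{tabla}, kill most entries with the Cartan-orthogonality criteria of \cite{Nkn2}, invoke Proposition~\ref{Propsl2RxG1} for the $\Sl_2(\RR)\times(G_1/H)$ products, and treat the rest by direct Ricci computations. But the proposal stops precisely where the actual work begins: you yourself flag that ``controlling the full moduli of invariant metrics \dots where \cite{Nkn2} does not apply, is exactly the delicate point,'' and then you do not resolve it. The cases left open are concrete and unavoidable: $\Sl_2(\CC)/\U(1)$ (where $\qg_1\simeq\pg_1$, so some invariant metrics are Cartan-orthogonal for \emph{no} Cartan decomposition), $\SU(2,1)/\Delta_{0,1}\U(1)$, $\SO(4,1)/\SO(3)$ and $\Sl_3(\RR)/\SO(2)$ (which need the finer \cite[Theorem~2]{Nkn2}), and the infinite family $(\Sl_2(\RR)\times\Sl_2(\CC))/\Delta_{p,q}\U(1)$ with $p\neq q$. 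Without these, the theorem is not proved.

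Moreover, the generic obstruction you describe (``strict positivity of a weighted sum of squares of structure constants measured against a negative Killing-form contribution'') is not what actually closes these cases. For $\Sl_2(\CC)/\U(1)$ the paper parametrizes the metrics by a matrix $h$ with a single mixing parameter $d$ between $\pg_1$ and $\qg_1$ (Lemma~\ref{lemmasl2C}) and computes one off-diagonal Ricci entry, $\Ricci(h^{-1}Y_1,h^{-1}X_2)$, which is a nonzero multiple of $d$; the Einstein condition forces $d=0$, the metric is then Cartan-orthogonal, and \cite{Nkn2} finishes. For $\SU(2,1)/\Delta_{0,1}\U(1)$ and for $(\Sl_2(\RR)\times\Sl_2(\CC))/\Delta_{p,q}\U(1)$ with $p\neq q$, the mechanism is that a distinguished trivial isotropy direction $\qg_0$, orthogonal to $[\ug,\ug]$, acts by skew-symmetric endomorphisms for \emph{every} invariant metric, so Lemma~\ref{lem_formulaRicci} gives $\Ricci(Y,Y)\geq 0$ there, contradicting $c<0$. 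These are different, case-specific arguments, and each requires verifying structural facts about the isotropy decomposition (which modules are equivalent, which directions are forced to be orthogonal) that your proposal does not establish. As it stands, the proposal is a correct plan with the decisive verifications missing.
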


The proof will follow from a case-by-case analysis. We warn the reader that, in contrast with the rest of the article, throughout this section the group $G$ will always be a semisimple Lie group.

\begin{definition}\label{defsshomogspace}
We call a homogeneous space $G/H$ \emph{semisimple of the non-compact type} if $G$ is a semisimple Lie group without compact simple factors.
\end{definition}

In view of Theorem \ref{structure}, we are reduced to studying the cases where $G/H$ is semisimple of the non-compact type. Moreover, we may restrict ourselves to the simply-connected case. Indeed, the universal cover of an Einstein manifold is still Einstein, and it is a classical result that symmetric spaces of the non-compact type do not admit non-trivial homogeneous quotients~ \cite{Car27}.

Following \cite{Al11, Nkn1}, we use the duality between compact and non-compact symmetric spaces to obtain the classification of semisimple homogeneous spaces of the non-compact type from the classification of compact homogeneous spaces in low dimensions ([BK]), as follows:

Given $G/H$ a semisimple homogeneous space of the non-compact type, let $\ggo = \ggo_1 \oplus \ldots \oplus \ggo_s$ be the decomposition of $\ggo$ into simple ideals --which are all of the non-compact type-- let $\kg\subseteq \ggo$ be a maximal compactly embedded subalgebra such that $\hg \subseteq \kg$, and for each $i=1,\ldots,s$ let $\kg_i = \ggo_i \cap \kg$, which is a maximal compactly embedded subalgebra of $\ggo_i$. The pairs $(\ggo_i, \kg_i)$ are symmetric pairs of the non-compact type (at the Lie algebra level), and its corresponding dual symmetric pairs $(\hat\ggo_i, \hat\kg_i)$ are of the compact type. If $\hat \ggo := \hat \ggo_1 \oplus \ldots\oplus \hat\ggo_s$, $\hat \kg := \hat\kg_1 \oplus \ldots \oplus \hat\kg_s$, then $\hat \kg$ and $\kg$ are isomorphic Lie algebras, and via this isomorphism we can consider the subalgebra $\hat \hg \subseteq \hat \kg$ corresponding to $\hg \subseteq \kg$. The effective homogeneous space $\hat G/ \hat H$ associated with $\hat \ggo, \hat\hg$ is compact.

Therefore, in order to obtain all possible spaces $G/H$ as above one can argue as follows:
\begin{itemize}
    \item Classify all compact homogeneous spaces in ``canonical presentation'' (in the sense of \cite{BhmKrr}).
    \item For each compact homogeneous space $(\hat G/ \hat H)$ in the previous classification, consider all possible compact symmetric pairs $(\hat \ggo,\hat\kg)$ with $\hat \hg \subseteq \hat \kg$, where $\Lie(\hat G) = \hat\ggo$, $\Lie(\hat H) = \hat \hg$ (there may be none at all).
    \item For each such pair, let $(\ggo,\kg)$ be its dual, obtained by dualizing each simple factor to its non-compact counterpart. The isomorphism $\kg \simeq \hat \kg$ defines a subalgebra $\hg\subseteq \kg$ isomorphic to $\hat \hg$, and from $\ggo, \hg$ one obtains a non-compact homogeneous space $G/H$ as desired.
\end{itemize}

We note that if a non-compact $G/H$ is obtained from a compact $\hat G/ \hat H$, then the Lie groups $H$ and $\hat H$ are isomorphic, and moreover the isotropy representations are equivalent.
\begin{remark}
To obtain the classification of all non-compact homogeneous spaces with semisimple transitive group (i.e.~ taking into account that $G$ may have compact simple factors), the duality procedure works in the very same way. One only needs to dualize the symmetric pairs which are of the non-compact type.
\end{remark}

We give in Table \ref{tabla} the classification of simply-connected, semisimple homogeneous spaces of the non-compact type (cf.~ Definition \ref{defsshomogspace}), together with its corresponding compact duals, the compact symmetric space used in each case for the dualization procedure, and the decomposition of the isotropy representation into irreducible summands. Notice also that, for notational purposes, some of the non-compact spaces in the table are not simply connected, but still they are to be read as their universal covers. Symmetric spaces are not included, since a list of all irreducible symmetric spaces can be found for instance in \cite[p.~200]{Bss}. We also do not include cases which are product of lower dimensional homogeneous spaces, unless the space admits non-product invariant metrics (see Proposition \ref{prodRiem} below). Our notation follows that of \cite{Bss}, with the only exception of $\SU(1,1)$, which we call $\Sl_2(\RR)$.

\afterpage{
    \clearpage
    \thispagestyle{empty}
    \begin{landscape}
        \scriptsize
        \centering
        \begin{tabular}{|>{\hspace{-3pt}}c<{\hspace{-3pt}}|c|c|c|c|>{\hspace{-3pt}}c<{\hspace{-3pt}}|}
          \hline
          $\dim$                & $\hat G/ \hat H$ (compact)                                                        & $\hat G/ \hat K$ (symmetric)                                              & $G/H$ (noncompact)                                                                        & Isotropy representation                                                                           & Note  \\
          \hline
          \multirow{1}{*}{$3$}  & $\SU(2)/\{\hbox{id}\}$                                                            & $\SU(2)/\U(1)$                                                            & $\Sl_2(\RR)/\{\hbox{id}\}$                                                                & Lie group                                                                                         &       \\ \cline{2-6}
          \hline
          \multirow{4}{*}{$5$}  & \multirow{3}{*}{$(\SU(2)\times\SU(2))/\Delta_{p,q}\U(1)$}                         & $(\SU(2){\times} \SU(2))/\Delta \SU(2)$                                   & $\Sl_2(\CC)/\U(1)$                                                                        & $\qg_1^{(2)}\oplus \pg_0^{(1)}\oplus \pg_1^{(2)}, \, \qg_1 \simeq \pg_1$                          &       \\ \cline{3-6}
                                &                                                                                   & \multirow{2}{*}{$\SU(2)/\U(1) {\times} \SU(2)/\U(1)$}                     & \multirow{2}{*}{$(\Sl_2(\RR)\times \Sl_2(\RR))/\Delta_{p,q} \SO(2)$}                      & $\qg_0^{(1)}\oplus\pg_1^{(2)} \oplus \pg_2^{(2)},$                                                &  \multirow{2}{*}{\ref{S2xS3}}     \\
                                &                                                                                   &                                                                           &                                                                                           & $\pg_1\simeq \pg_2 \iff p=q $                                                                     &       \\ \cline{2-6}
                                & $\SU(3)/\SU(2)$                                                                   & $\SU(3)/\U(2)$                                                            &  $\SU(2,1)/\SU(2)$                                                                        & $\qg_0^{(1)}\oplus\pg_1^{(4)}$                                                                    &       \\ \cline{2-6}
          \hline
          \multirow{5}{*}{$6$}  & \multirow{2}{*}{$(\SU(2) \times \SU(2))/\{\hbox{id}\}$}                           & $(\SU(2)\times \SU(2))/\Delta \SU(2)$                                     & $\Sl_2(\CC)/\{\hbox{id}\}$                                                                & Lie group                                                                                         &       \\ \cline{3-6}
                                &                                                                                   & $\left(\SU(2)/\U(1)\right)^2$                                             & $(\Sl_2(\RR)\times \Sl_2(\RR))/\{\hbox{id}\}$                                             & Lie group                                                                                         &       \\ \cline{2-6}
                                & $\Spe(2)/\Spe(1) \U(1)$                                                           & $\Spe(2)/\Spe(1) \Spe(1)$                                                 & $\Spe(1,1)/\Spe(1) \U(1)$                                                                 & $\qg_1^{(2)} \oplus \pg_1^{(4)}$                                                                  &       \\ \cline{2-6}
                                & $G_2/\SU(3)$                                                                      & -                                                                         & -                                                                                         & Irreducible                                                                                       &  \ref{isotropyirred}     \\ \cline{2-6}
                                & $\SU(3)/T^2$                                                                      & $\SU(3)/\U(2)$                                                            & $\SU(2,1)/T^2$                                                                            &  $\qg_1^{(2)} \oplus \pg_1^{(2)} \oplus \pg_2^{(2)}$                                              &       \\ \cline{2-6}
          \hline
          \multirow{14}{*}{$7$} & $\Spein(7)/G_2$                                                                   & -                                                                         & -                                                                                         & Irreducible                                                                                       &  \ref{isotropyirred}     \\ \cline{2-6}
                                & $\Spe(2)/\SU(2)$                                                                  & -                                                                         & -                                                                                         & Irreducible                                                                                       &  \ref{isotropyirred}     \\ \cline{2-6}
                                & $\SU(4)/\SU(3)$                                                                   & $\SU(4)/\U(3)$                                                            & $\SU(3,1)/\SU(3)$                                                                         & $\qg_0^{(1)}\oplus\pg_1^{(6)}$                                                                    &       \\ \cline{2-6}
                                & $\Spe(2)/\Spe(1)$                                                                 & $\Spe(2)/\Spe(1)\Spe(1)$                                                  & $\Spe(1,1)/\Spe(1)$                                                                       & $\qg_0^{(3)}\oplus\pg_1^{(4)}$                                                                    &       \\ \cline{2-6}
                                & \multirow{2}{*}{$\SO(5)/\SO(3)$}                                                  & $\SO(5)/\SO(4)$                                                           & $\SO(4,1)/\SO(3)$                                                                         & $\qg_1^{(3)}\oplus \pg_0^{(1)}\oplus \pg_1^{(3)}$, $\qg_1 \simeq \pg_1$                           &       \\ \cline{3-6}
                                &                                                                                   & $\SO(5)/\SO(3)\SO(2)$                                                     & $\SO(3,2)/\SO(3)$                                                                         & $\qg_0^{(1)}\oplus\pg_1^{(3)}\oplus\pg_2^{(3)}$, $\pg_1 \simeq \pg_2$                             &       \\ \cline{2-6}
                                & \multirow{4}{*}{$\SU(3)/\Delta_{p,q}\U(1)$}                                       & \multirow{3}{*}{$\SU(3)/\U(2)$}                                           & \multirow{3}{*}{$\SU(2,1)/\Delta_{p,q}\U(1)$}                                             & $\qg_0^{(1)} \oplus \qg_1^{(2)} \oplus \pg_1^{(2)} \oplus \pg_2^{(2)}$,                           &       \\
                                &                                                                                   &                                                                           &                                                                                           & $\pg_1\simeq \pg_2 \iff p=q=1,$                                                                   &   \ref{Aloff-Wallach}   \\
                                &                                                                                   &                                                                           &                                                                                           & $\qg_1 \simeq \pg_1 \iff p=0, q=1 $                                                               &       \\ \cline{3-6}
                                &                                                                                   & $\SU(3)/\SO(3)$                                                           & $\Sl_3(\RR)/\SO(2)$                                                                       & $\qg_1^{(2)}\oplus\pg_0^{(1)}\oplus\pg_1^{(2)}\oplus\pg_2^{(2)}, \, \qg_1\simeq\pg_1$             &       \\ \cline{2-6}
                                &$(\SU(3){\times}\SU(2))/\Delta_{p,q}\U(1)(\SU(2){\times}\{\hbox{id}\})$            & $\SU(3)/\U(2) {\times} \SU(2)/\U(1)$                                      & $(\SU(2,1){\times}\Sl_2(\RR))/\Delta_{p,q}\U(1)(\SU(2){\times}\{\hbox{id}\})$             & $\qg_0^{(1)} \oplus \pg_1^{(4)} \oplus \pg_2^{(2)} $                                              &   \ref{S2xS3}    \\ \cline{2-6} 
                                & $(\SU(2)\times \SU(2) \times \SU(2))/\Delta_{a,b,c}T^2$                           & $\left(\SU(2)/\U(1)\right)^3$                                             & $(\Sl_2(\RR)\times \Sl_2(\RR) \times \Sl_2(\RR))/\Delta_{a,b,c}T^2$                       & $\qg_0^{(1)} \oplus \pg_1^{(2)}\oplus\pg_2^{(2)}\oplus\pg_3^{(2)}$                                &       \\ 
          \hline
          \multirow{12}{*}{$8$} & \multirow{2}{*}{$\SU(3)/\{\hbox{id}\}$}                                           & $\SU(3)/\U(2)$                                                            & $\SU(2,1)/\{\hbox{id} \}$                                                                 & Lie group                                                                                         &       \\ \cline{3-6}
                                &                                                                                   & $\SU(3)/\SO(3)$                                                           & $\Sl_3(\RR)/\{\hbox{id} \}$                                                               & Lie group                                                                                         &       \\ \cline{2-6}
                                & \multirow{2}{*}{$\Spe(2)/T^2$}                                                    & $\Spe(2)/\U(2)$                                                           & $\Spe(2,\RR)/T^2$                                                                         & $\qg_1^{(2)}\oplus \pg_1^{(2)}\oplus \pg_2^{(2)}\oplus \pg_3^{(2)}$                               &       \\ \cline{3-6}
                                &                                                                                   & $\Spe(2)/\Spe(1)\Spe(1)$                                                  & $\Spe(1,1)/T^2$                                                                           & $\qg_1^{(2)}\oplus \qg_2^{(2)}\oplus \pg_1^{(2)}\oplus \pg_2^{(2)}$                               &       \\ \cline{2-6}
                                & \multirow{4}{*}{$(\SU(2)\times\SU(2)\times\SU(2))/\Delta_{a_1,a_2,a_3}\U(1)$}     & \multirow{2}{*}{$\SU(2)/\U(1){\times}\left(\SU(2)^2/\Delta\SU(2) \right)$}& \multirow{2}{*}{$(\Sl_2(\RR)\times\Sl_2(\CC))/\Delta_{p,q} \U(1)$}                        & $\qg_0^{(1)}\oplus \qg_1^{(2)}\oplus \pg_0^{(1)} \oplus \pg_1^{(2)} \oplus \pg_2^{(2)}, $         &  \multirow{4}{*}{\ref{a1a2a3}}     \\  
                                &                                                                                   &                                                                           &                                                                                           & $  \,\qg_1\simeq\pg_1, \pg_1\simeq \pg_2 \iff p=q$                                                &       \\ \cline{3-5}
                                &                                                                                   & \multirow{2}{*}{$(\SU(2)/\U(1))^3$}                                       & \multirow{2}{*}{$(\Sl_2(\RR)\times\Sl_2(\RR)\times\Sl_2(\RR))/\Delta_{a_1,a_2,a_3}\U(1)$} & $\qg_0^{(2)} \oplus \pg_1^{(2)} \oplus \pg_2^{(2)} \oplus \pg_3^{(2)},$                           &       \\
                                &                                                                                   &                                                                           &                                                                                           & $\pg_i\simeq \pg_j \iff a_i=a_j$                                                                  &       \\ \cline{2-6}
                                & \multirow{3}{*}{$\SU(2)\times(\SU(2)\times \SU(2))/\Delta_{p,q}\U(1)$}            & $\SU(2)/\U(1){\times}\left(\SU(2)^2/\Delta\SU(2) \right)$                 & $\Sl_2(\RR) \times  \Sl_2(\CC)/ \U(1)$                                                    & $\qg_0^{(1)}\oplus \qg_1^{(2)} \oplus \pg_0^{(3)} \oplus \pg_1^{(2)},\, \qg_1 \simeq \pg_1$       &       \\ \cline{3-6}
                                &                                                                                   & \multirow{2}{*}{$(\SU(2)/\U(1))^3$}                                       & \multirow{2}{*}{$\Sl_2(\RR)\times(\Sl_2(\RR)\times\Sl_2(\RR))/\Delta_{p,q}\U(1)$}         & $\qg_0^{(2)} \oplus \pg_0^{(2)} \oplus \pg_1^{(2)}\oplus \pg_2^{(2)}$                             &  \multirow{2}{*}{\ref{S2xS3}}     \\
                                &                                                                                   &                                                                           &                                                                                           & $\pg_1\simeq \pg_2 \iff p = q$                                                                    &       \\ \cline{2-6}
                                & $\SU(2)\times (\SU(3)/\SU(2))$                                                    & $\SU(2)/\U(1) \times \SU(3)/\U(2)$                                        & $\Sl_2(\RR) \times \SU(2,1)/\SU(2)$                                                       & $\qg_0^{(2)}\oplus \pg_0^{(2)} \oplus \pg_1^{(4)} $                                               &       \\ \cline{2-6}
          \hline
        \end{tabular}
        \captionof{table}{Non-symmetric, non-product, non-compact homogeneous spaces with semisimple transitive group without compact simple factors, and its corresponding compact duals, in dimensions less than or equal to $8$.}
        \label{tabla}
    \end{landscape}
    \clearpage
}


Regarding the list of compact homogeneous spaces in canonical presentation, we refer the reader to \cite{BhmKrr}. All the embedings of the isotropy subgroup are clear once the corresponding compact symmetric space used for the dualization is taken into account. The precise meaning of the parameters corresponding to abelian subgroups in the isotropy may be found in \cite[~\S 1]{Nkn04}.

The information on the isotropy representation is to be understood as follows: for a space $G/H$, consider $\hg \subseteq \kg \subseteq \ggo$ as above, where $\kg$ is a maximal compactly embedded subalgebra with corresponding connected subgroup $K$. Take the corresponding Cartan decomposition $\ggo = \kg \oplus \pg$ (cf.~\cite[pp.~182]{Helgason}), and let $\qg$ be an $\Ad(K)$-invariant complement for $\hg$ in $\kg$. Setting $\mg = \qg \oplus \pg$, we obtain a reductive decomposition $\ggo = \hg \oplus \mg$ for the homogeneous space $G/H$. Whenever we write $\sum_i \qg_i^{(a_i)} \oplus \sum_j \pg_j^{(b_j)}$ we mean that
\[
    \qg = \sum_i \qg_i^{(a_i)}, \qquad \pg = \sum_j \pg_j^{(b_j)}, \qquad \dim \qg_i^{(a_i)} = a_i, \quad  \dim \pg_j^{(b_j)} = b_j,
\]
and for $i,j \neq 0$, each summand $\qg_i^{(a_i)}$, $\pg_j^{(b_j)}$ is an irreducible $\Ad(H)$-module, where any two such modules are inequivalent unless otherwise stated. The $0$ sub-index stands for trivial modules (i.e.~ $[\hg,\qg_0^{(a_0)}] = [\hg,\pg_0^{(b_0)}] = 0$).

\vspace{2mm}
\noindent \emph{Notes on Table \ref{tabla}:}
\vspace{-1mm}
\begin{enumerate}[(a)]
  \item\label{Aloff-Wallach} $p,q\in \ZZ$, $0\leq p \leq q $, $\operatorname{gcd}(p,q) = 1$. See \cite{Wng82}.
  \item\label{S2xS3} $p,q\in \ZZ-\{0\}$, $p\leq q$, $\operatorname{gcd}(p,q) = 1$.
  \item\label{isotropyirred} These compact spaces are isotropy irreducible but non-symmetric (see \cite[pp.~ 203]{Bss}). Clearly, they do not admit any non-compact counterpart.
  \item\label{a1a2a3} $a_1,a_2,a_3 \in \ZZ-\{0\}$, $a_1\leq a_2 \leq a_3$, $\operatorname{gcd}(a_1,a_2,a_3) = 1$ (the order may be assumed up to equivariant diffeomorphism, by using outer automorphisms given by the Weyl group; the parameters are all nonzero since otherwise the space splits as a product, and this are considered as a separate case). The space $(\Sl_2(\RR) \times \Sl_2(\CC))\slash \Delta_{p,q} \U(1)$ is obtained only when $a_2 = a_3$. For convenience, we have renamed the parameters as $p=a_1$, $q = a_2 = a_3$.
\end{enumerate}

Recall that by \cite[Theorem 1]{Nkn2}, if a $G$-invariant metric makes the chosen Cartan decomposition orthogonal (i.e.~it is such that $\langle \qg, \pg \rangle = 0$), then $(G/H,g)$ is not Einstein. In particular, if we consider a decomposition of $\mg$ into irreducible $\Ad(H)$-modules given by $\qg = \qg_1 \oplus \ldots \oplus \qg_u$, $\pg = \pg_1 \oplus \ldots \oplus \pg_v$ (recall that $\qg$ and $\pg$ are $\Ad(H)$-invariant), and none of the $\qg_i$ is equivalent to any of the $\pg_j$, then every $G$-invariant metric on $G/H$ satisfies $\langle \qg,\pg\rangle=0$, and thus none of them is Einstein \cite[Corollary]{Nkn2}. It may be the case that a single Cartan decomposition is not orthogonal with respect to \emph{every} $G$-invariant metric, and still every metric makes \emph{some} Cartan decomposition orthogonal (recall that a Cartan decomposition is only unique up to the action of inner automorphisms). In \cite[Theorem~ 2]{Nkn2}, necessary and sufficient conditions are given for this to happen.

The following result is well-known, but we include a proof of it for the sake of completeness.

\begin{proposition}\label{prodRiem}
Let $G_1\slash H_1$, $G_2\slash H_2$ be two homogeneous spaces such that the isotropy representation of $G_1/H_1$ acts non-trivially on every invariant subspace. Then, any $\left(G_1 \times G_2\right)$-invariant metric on $\left(G_1 \times G_2\right)\slash \left(H_1 \times H_2\right)$ is a Riemannian product of invariant metrics on each factor.
\end{proposition}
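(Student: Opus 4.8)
The plan is to decompose the isotropy representation of the product $(G_1\times G_2)/(H_1\times H_2)$ adapted to the two factors, and then use Schur's lemma together with the hypothesis on $G_1/H_1$ to kill the mixed block of any invariant metric. Write $\ggo_i=\hg_i\oplus\mg_i$ for reductive decompositions of $G_i/H_i$ with respect to $H_i$, so that $\ggo_1\oplus\ggo_2=(\hg_1\oplus\hg_2)\oplus(\mg_1\oplus\mg_2)$ is a reductive decomposition for the product. An arbitrary $(G_1\times G_2)$-invariant metric corresponds to an $\Ad(H_1\times H_2)$-invariant inner product $\ip$ on $\mg_1\oplus\mg_2$; what must be shown is that $\langle\mg_1,\mg_2\rangle=0$, since then $\ip$ restricts to invariant inner products on each $\mg_i$ and the metric is the corresponding Riemannian product.

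The key step is to analyze the ``mixed'' bilinear form $B\colon\mg_1\times\mg_2\to\RR$, $B(X,Y)=\langle X,Y\rangle$. Because $H_1$ acts trivially on $\mg_2$ (the $H_1$-factor of $H_1\times H_2$ does nothing to $\ggo_2$, as the two ideals commute) and $H_2$ acts trivially on $\mg_1$, invariance of $\ip$ under $\Ad(H_1\times H_2)$ forces $B(\Ad(h_1)X,Y)=B(X,Y)$ for all $h_1\in H_1$, i.e.\ for each fixed $Y\in\mg_2$ the linear functional $X\mapsto B(X,Y)$ on $\mg_1$ is $\Ad(H_1)$-invariant. Equivalently, the linear map $\mg_1\to\mg_2^*$ (or, dualizing once, $\mg_2\to\mg_1$) induced by $B$ is $\Ad(H_1)$-equivariant, where $\Ad(H_1)$ acts trivially on $\mg_2$. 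Thus its image lies in the subspace of $\mg_1$ fixed pointwise by $\Ad(H_1)$; but that fixed subspace is an invariant subspace on which the isotropy representation of $G_1/H_1$ acts trivially, and by hypothesis no such nonzero subspace exists. Hence $B\equiv 0$.

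The only mild subtlety to spell out is why it suffices to work with the connected isotropy group (or with $\Ad(H_1)$ acting on $\mg_1$ via its image in $\Gl(\mg_1)$): the hypothesis is stated at the level of the isotropy representation on invariant subspaces, and the fixed-point subspace $\mg_1^{\Ad(H_1)}$ is automatically $\Ad(H_1)$-invariant, so it is one of the ``invariant subspaces'' referred to in the statement and must therefore be zero. Once $\langle\mg_1,\mg_2\rangle=0$ is established, the inner product splits as $\ip=\ip_1\oplus\ip_2$ with $\ip_i$ an $\Ad(H_i)$-invariant inner product on $\mg_i$, and a standard check (the Levi--Civita connection and curvature of such a product metric split accordingly, or simply: the two orbit foliations are orthogonal and totally geodesic) shows the resulting homogeneous metric is the Riemannian product of the invariant metrics $(G_i/H_i,g_i)$. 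I do not expect a genuine obstacle here; the one point requiring a line of care is the equivariance-plus-triviality argument identifying the image of $B$ with a fixed subspace, which is exactly where the asymmetry of the hypothesis (imposed only on $G_1/H_1$) gets used.
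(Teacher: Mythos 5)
Your proof is correct and rests on the same mechanism as the paper's: the factor $H_1$ acts trivially on $\mg_2$ because the two ideals commute, so a Schur-type argument forces the mixed part of the inner product to vanish. The only cosmetic difference is that the paper phrases this as the non-existence of an intertwining operator between irreducible submodules $\pg_1\subseteq\mg_1$ and $\pg_2\subseteq\mg_2$ (hence inequivalence, hence orthogonality), while you show directly that the map $\mg_2\to\mg_1$ induced by the mixed block is $\Ad(H_1)$-equivariant into the fixed subspace $\mg_1^{\Ad(H_1)}$, which the hypothesis makes zero.
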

\begin{proof}
If $\ggo_1=\hg_1\oplus\mg_1$ and $\ggo_2=\hg_2\oplus\mg_2$ are reductive decompositions of $G_1/H_1$ and $G_2/H_2$ respectively, then $\ggo_1\oplus\ggo_2=(\hg_1\oplus\hg_2)\oplus(\mg_1\oplus\mg_2)$ is a reductive decomposition of $G_1 \times G_2/H_1 \times H_2.$ Let $\pg_i\subseteq \mg_i$ be $\ad(\hg_i)$-irreducible subspaces, $i=1,2$. We know that $\ad(\hg_1)|_{\pg_1}$ is non-trivial. If there was an intertwining operator $T:\pg_1\rightarrow\pg_2$, i.e,
\[
    T\circ\ad(Z)|_{\pg_1}=\ad(Z)|_{\pg_2}\circ T, \quad \mbox{for all } Z\in\hg_1\oplus\hg_2,
\]
we could take $Z=(Z_1,0) \in \hg_1\oplus\hg_2$ and would have that $T\circ\ad(Z_1)|_{\pg_1}=0,$ for all  $Z_1 \in \hg_1,$ so $\ad(Z_1)|_{\pg_1}=0$, for all $Z_1 \in \hg_1,$ which is a contradiction.
\end{proof}

We are now in a position to start the case-by-case analysis.

\subsection{$\dim G/H \leq 7$}$ $

After having computed all the isotropy representations, we see that in most of the spaces of dimension up to $7$ in Table \ref{tabla}, the Cartan decomposition we have chosen is such that $\qg$ and $\pg$ share no equivalent modules, and thus these spaces admit no $G$-invariant Einstein metric. The exceptions are the following:
\[
    \Sl_2(\CC)/\U(1), \quad \SO(4,1)/\SO(3), \quad \SU(2,1)/\Delta_{p,q}\U(1), \quad \Sl_3(\RR)/\SO(2).
\]
Non-existence of homogeneous Einstein metrics on $\Sl_3(\RR)/\SO(2)$ was established in \cite[~Example 4]{Nkn2} by finding, for every $G$-invariant metric, a suitable Cartan decomposition which is orthogonal. By applying the same methods and a straightforward computation, it can be shown that the space $\SO(4,1)/\SO(3)$ also satisfies the hypotheses of \cite[Theorem 2]{Nkn2}, and hence it admits no homogeneous Einstein metric.

\subsubsection{$\Sl_2(\CC)/\U(1)$}\label{sectionsl2C}

Unfortunately, this space admits invariant metrics for which there is no orthogonal Cartan decomposition.

Consider the following ordered basis $\mathcal{B}$ for $\slg_2(\CC)$
\begin{align}\label{matricessl2C}
    Z &= \twomatrix{i}{0}{0}{-i}, \quad Y_0 = \twomatrix{1}{0}{0}{-1}, \quad Y_1 = \twomatrix{0}{1}{1}{0},\\
    Y_2 &= \twomatrix{0}{i}{-i}{0}, \quad X_1 =  \twomatrix{0}{1}{-1}{0}, \quad X_2 = \twomatrix{0}{i}{i}{0}.\nonumber
\end{align}
The isotropy subalgebra is given by $\hg = \RR Z$, $\mg = \operatorname{span}_\RR \{ Y_0, Y_1, Y_2, X_1, X_2\}$ is a reductive complement, and it decomposes into irreducible modules as $\mg = \pg_0 \oplus \pg_1 \oplus \qg_1$, where $\pg_0 = \RR Y_0$, $\pg_1 = \RR Y_1 \oplus \RR Y_2$, $\qg_1 = \RR X_1 \oplus \RR X_2$. Also, if $\kg = \hg \oplus \qg_1 \simeq \sug(2)$, $\pg = \pg_0 \oplus \pg_1$, then $\slg_2(\CC) = \kg \oplus \pg$ is a Cartan decomposition. Let us fix an inner product $\ip_B$ on $\slg_2(\CC)$ that makes $\mathcal{B}$ orthonormal (this inner product is, up to a scalar multiple, the one given by the Killing form of $\slg_2(\CC)$, after reversing its sign on the subalgebra~ $\kg$). Finally, let $\ip_0 = \ip_B \big|_{\mg \times \mg}$, which is of course $\Ad(\U(1))$-invariant.

\begin{lemma}\label{lemmasl2C}
Up to isometry, $\Sl_2(\CC)$-invariant metrics on $\Sl_2(\CC)\slash\U(1)$ can be parameterized by $\Ad(\U(1))$-invariant inner products on $\mg$ of the form
\[
    \langle \cdot, \cdot \rangle_h = \langle h\, \cdot\, , h\, \cdot \rangle_0,
\]
where $h\in \Gl_5(\RR)$ is given by
\[
    h = \left[\begin{array}{ccccc} e &0 &0 &0 & 0\\ 0& a &0 &0 &0 \\0 & 0& a & 0&0 \\0 &0 & -d& b &0 \\ 0&d &0 &0 & b \end{array}\right], \qquad a,b,d,e \in \RR, \quad a,b,e\neq 0.
\]
Moreover, for each such metric, the Ricci curvature satisfies
\[
    \Ricci(h^{-1 }Y_1, h^{-1} X_2) = 4 \, d \cdot \left( (a^2 - e^2)^2 + a^2(b^2 + d^2) \right)a^{-3} b^{-2} e^{-2}.
\]
\end{lemma}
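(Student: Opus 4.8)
The plan is to prove the two assertions of Lemma \ref{lemmasl2C} by a direct but carefully organized computation, exploiting the $\Ad(\U(1))$-action to normalize the metric and then using the homogeneous Ricci formula.

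\textbf{Step 1: Parametrizing the invariant metrics.} First I would decompose $\mg$ into isotypic components under $\Ad(\U(1))$. The adjoint action of $Z = \twomatrix{i}{0}{0}{-i}$ fixes $Y_0$ and rotates the pairs $(Y_1, Y_2)$ and $(X_1, X_2)$ (both with the same weight, after suitably matching the complex structures). Hence $\mg$ decomposes as a trivial module $\RR Y_0$ plus a $4$-dimensional isotypic component $\RR Y_1\oplus \RR Y_2 \oplus \RR X_1 \oplus \RR X_2$ on which $\U(1)$ acts as two copies of the standard rotation representation. An $\Ad(\U(1))$-invariant inner product is therefore determined by: a positive scalar on $\RR Y_0$; and, on the $4$-dimensional piece, a $\U(1)$-equivariant positive-definite symmetric form, which (by Schur for the standard $2$-dimensional rotation rep, whose endomorphism algebra is $\CC$) is given by a positive-definite $2\times2$ Hermitian matrix acting on the pair of standard modules. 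Writing out that Hermitian matrix and absorbing the reference inner product $\ip_0$, one gets exactly $5$ real parameters. I would then use the freedom to act by an isometry of $\slg_2(\CC)$ — in particular an inner automorphism from the normalizer of $\hg$, i.e.\ conjugation by the circle $\exp(tZ)$ and by the element swapping the two weight spaces appropriately, or alternatively a diagonalization of the Hermitian block — to kill one of the off-diagonal parameters and bring $h$ to the stated normal form (with the single off-diagonal parameter $d$ appearing in the $X$-rows as written, and with the constraint that the diagonal entries $a,a,b,b,e$ are the ones that must be nonzero for $h$ to be invertible). This is bookkeeping; the only subtlety is checking that the claimed normal form is genuinely attained after using up all the isometry freedom, which is the kind of thing one verifies by counting dimensions of the relevant orbit.

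\textbf{Step 2: Computing the mixed Ricci curvature.} With the metric $\ip_h = \langle h\cdot, h\cdot\rangle_0$, I would transport everything to the fixed inner product $\ip_0$: the Ricci curvature of $\ip_h$ evaluated on $h^{-1}Y_1, h^{-1}X_2$ equals the Ricci curvature computed from the bracket relations conjugated by $h$, evaluated on $Y_1, X_2$, using the orthonormal basis $\mathcal{B}|_{\mg}$ for $\ip_0$. So I would first record all structure constants of $\slg_2(\CC)$ in the basis $\mathcal B$ (these are elementary: brackets of $2\times2$ matrices), then apply the standard homogeneous-space Ricci formula \cite[7.38]{Bss}. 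Since $\slg_2(\CC)$ is unimodular and semisimple, the mean-curvature and trace-of-$\ad$ terms behave nicely; in fact Lemma \ref{lem_formulaRicci} is tailor-made here once one checks the relevant $[\kg,\cdot]=0$ hypothesis — but $Y_1, X_2$ are \emph{not} centralized by $Z$, so I expect to use the full formula \cite[7.38]{Bss} rather than the simplified Lemma \ref{lem_formulaRicci}. The computation is: plug the $h$-conjugated brackets into the formula, collect the terms involving the pair $(Y_1, X_2)$, and simplify. The algebra is somewhat heavy but completely mechanical, and the answer is forced to be a rational function of $a,b,d,e$ that is an odd function of $d$ (it must vanish when $d=0$, since then the metric makes the Cartan decomposition $\kg\oplus\pg$ orthogonal and hence is "diagonal", forcing all mixed Ricci components between $\pg$ and $\qg$ to vanish by symmetry). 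This sanity check — linearity in $d$, and the specific denominator $a^3 b^2 e^2$ coming from the inverse entries of $h$ — is what I would use to catch arithmetic slips.

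\textbf{Main obstacle.} The hardest part is not conceptual but organizational: keeping the $5\times5$ matrix $h$, its inverse, and the roughly twenty structure constants straight through the double sum in the Ricci formula without error. A secondary potential pitfall is Step 1 — one must be careful that the off-diagonal parameter genuinely cannot be removed by a further isometry (otherwise the whole point of keeping $d$ is lost); the resolution is that the residual isometry group after fixing the diagonal entries is discrete (a finite group coming from Weyl-group-type symmetries together with sign changes), so a one-parameter off-diagonal degree of freedom must survive, and $d$ is it. Once these two points are handled, the displayed formula for $\Ricci(h^{-1}Y_1, h^{-1}X_2)$ drops out, and the factorization $(a^2-e^2)^2 + a^2(b^2+d^2)$ — which is manifestly nonzero whenever $a\neq 0$ — is exactly what will be exploited afterwards to show that $\Ricci$ cannot be a multiple of the metric unless $d=0$, reducing the analysis of $\Sl_2(\CC)/\U(1)$ to the diagonal case.
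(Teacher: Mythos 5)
Your overall strategy coincides with the paper's: decompose $\mg$ into the trivial module $\RR Y_0$ and the isotypic piece $\pg_1\oplus\qg_1$, parameterize invariant inner products by a scalar plus a positive-definite Hermitian $2\times 2$ block, normalize by an isometry, and then grind out the Ricci entry from \cite[7.38]{Bss}. The second half (the Ricci computation) is indeed just the mechanical calculation the paper also declares routine though lengthy, and your sanity checks (oddness in $d$, vanishing at $d=0$) are sensible.

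However, Step 1 as you describe it has a genuine gap: the isometries you invoke to reach the normal form do not do the job. Conjugation by $\exp(tZ)$ is conjugation by the isotropy itself, so it acts \emph{trivially} on the set of $\Ad(\U(1))$-invariant inner products and provides no normalization freedom whatsoever. There is also no automorphism of the real Lie algebra $\slg_2(\CC)$ ``swapping the two weight spaces'' $\pg_1$ and $\qg_1$ --- they lie in opposite eigenspaces of the Cartan involution and carry opposite Killing-form signs --- and a general ``diagonalization of the Hermitian block'' is a unitary change of basis of the isotypic component that is not induced by any Lie algebra automorphism, hence not by an isometry of the homogeneous space; were it available, it would kill $d$ as well, contradicting the very statement you are proving. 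The isometry that actually works, and the one the paper uses, is $T=\Ad(\exp(tY_0))$: since $Y_0$ centralizes $Z$, this descends to an isometry between the metrics associated to $h$ and to $h\cdot T$, and since $[Y_0,Y_1]=2X_1$, $[Y_0,X_1]=2Y_1$ (and likewise for $Y_2,X_2$), it acts as a hyperbolic rotation mixing $\pg_1$ and $\qg_1$, which is exactly what is needed to kill the remaining real parameter $c$ in the general off-diagonal block $\minimatrix{c}{-d}{d}{c}$ while leaving the one-parameter family in $d$. Without identifying this noncompact one-parameter subgroup of the normalizer of the isotropy, your normalization step fails, and with it the parameter count ($5$ parameters reduced by a genuinely effective $1$-parameter group of isometries) that justifies the stated normal form.
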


\begin{proof}
Since the modules $\pg_1$ and $\qg_1$ are the only equivalent modules, and they are of complex type, it is clear that the metrics are parameterized by inner products $\ip_h$ on $\mg$, where $h$ is as in the statement, but with a $2\times 2$ block of the form $\minimatrix{c}{-d}{d}{c}$ mapping $\pg_1$ to $\qg_1$. Using that $\ip_h$ and $\ip_{h \cdot T}$ give rise to isometric metrics for any $T = \Ad\left(\exp{t Y_0}\right) \in \Aut(\slg_2(\CC))$, it is easy to find $t$ so that the matrix $h \cdot T$ has the desired form.

The formula for the Ricci curvature follows from a routine (though somewhat lengthy) computation.
\end{proof}

The importance of the previous formula for the Ricci curvature is that this off-diagonal entry vanishes if and only if $d=0$ (recall also that $\langle h^{-1} Y_1, h^{-1} X_2 \rangle_h = 0$). But if $d=0$ then the Cartan decomposition is orthogonal, and the metric is non-Einstein.

\subsubsection{$\SU(2,1)/\Delta_{p,q}\U(1)$} These spaces are the non-compact analogous of the well-known Aloff-~Wallach spaces \cite{AW75}. As long as $p\neq 0$, the Cartan decomposition is orthogonal with respect to any $\SU(2,1)$-invariant metric, hence none of them is Einstein by \cite{Nkn2}. However, the space corresponding to $p=0$, $q=1$ admits $\SU(2,1)$-invariant metrics which make no Cartan decomposition orthogonal. Let us have a closer look at the Lie algebra $\sug(2,1)$: an $\Ad(\Delta_{0,1}\U(1))$-invariant decomposition is given by $\sug(2,1) = \hg_{0,1} \oplus \qg_0 \oplus \qg_1 \oplus \pg_1 \oplus \pg_2$, where
\begin{align*}
    \hg_{0,1} &=  \RR \threematrix{0}{0}{0}{0}{i}{0}{0}{0}{-i}, \quad \qg_0 =  \RR \threematrix{2i}{0}{0}{0}{-i}{0}{0}{0}{-i},
    \quad \qg_1 = \left\{ \threematrix{0}{z}{0}{-\bar{z}}{0}{0}{0}{0}{0} : \, z\in \CC  \right\},  \\
    & \pg_1 = \left\{ \threematrix{0}{0}{z_1}{0}{0}{0}{\bar{z_1}}{0}{0} : \, z_1\in \CC  \right\}, \quad
    \pg_2 = \left\{ \threematrix{0}{0}{0}{0}{0}{z_2}{0}{\bar{z_2}}{0} : \, z_2\in \CC  \right\},
\end{align*}
and the modules $\qg_1$ and $\pg_1$ are equivalent. Any invariant metric would then make the subspaces $\qg_0$, $\pg_2$ and $\qg_1\oplus \pg_1$ orthogonal. But observe that $\ad(\qg_0)$ acts trivially on $\qg_0$ and $\pg_2$, and it acts precisely as the isotropy $\hg_{0,1}$ on $\qg_1\oplus \pg_1$. This immediately implies that for any invariant metric, $\ad(\qg_0)$ consists of skew-symmetric endomorphisms, and hence by Lemma \ref{lem_formulaRicci} the Ricci curvature is non-negative in this directions. Therefore, $\SU(2,1)\slash \Delta_{0,1}\U(1)$ admits no invariant metrics of negative Ricci curvature.

\subsection{$\dim G/H = 8$} $ $

The first two spaces of dimension $8$ in Table \ref{tabla} are Lie groups and will be omitted. The next two cases correspond to homogeneous spaces $G/H$ where $\rank G = \rank H$. As is well known, this implies that the isotropy representation decomposes a sum of pairwise inequivalent modules. Clearly, any Cartan decomposition will be orthogonal with respect to any $G$-invariant metric, and thus none of those can be Einstein by \cite{Nkn2}. For the infinite family of homogeneous spaces $\left(\Sl_2(\RR) \times \Sl_2(\RR) \times \Sl_2(\RR)\right) / \Delta_{a_1,a_2,a_3} \U(1)$ ($6$-th line in the table), the isotropy representation may have some equivalent modules in some special cases, but they are all contained in the subspace $\pg$ of the Cartan decomposition. This implies that for any $G$-invariant metric one still has $\qg\perp\pg$, and none of them can be Einstein.

Let us now consider the following spaces:
\[
\Sl_2(\RR)\times \Sl_2(\CC)/\U(1),\quad \Sl_2(\RR) \times \left(\Sl_2(\RR)\times \Sl_2(\RR)\right)/\Delta_{p,q} \U(1), \quad \Sl_2(\RR) \times \SU(2,1)/\SU(2).
\]
They are all of the form $\Sl_2(\RR) \times G_1 / H$, for some semisimple Lie group $G_1$. Notice that all of them admit metrics which are not Cartan-orthogonal for \emph{any} Cartan decomposition, and hence \cite{Nkn2} can not be applied. Another problem that arises when studying the Einstein equation in these spaces is that whenever the isotropy representation of the space $G_1/H$ has some trivial modules, then the space $G/H$ admits non-product $G$-invariant metrics (cf.~ Proposition \ref{prodRiem}). However, there is still \emph{some} control on such trivial modules. Namely, an easy computations with Lie brackets shows that for the spaces under consideration we have $[\mg_0,\mg_0] \subseteq \hg$, where $\mg_0$ represents the trivial module in $G_1/H$. By looking at the Ricci curvature of $G/H$ at $eH$ in directions tangent to the orbit of $G_1$, and in directions orthogonal to this orbit, we were able to show that if the Ricci curvature preserves this orthogonality, then this forces the metric to be a product (clearly, for an Einstein metric such orthogonality would automatically be preserved by the Ricci curvature). Since $\Sl_2(\RR)$ does not admit any left-invariant Einstein metric, this proves that $G/H$ does not either.


\begin{proposition}\label{Propsl2RxG1}
Let $G/H = \Sl_2(\RR)\times \left( G_1/ H\right)$ be a homogeneous space with $G_1$ semisimple, and assume that $N_{G_1}(H) / H$ is abelian. Then, $G/H$ admits no $G$-invariant Einstein metric.
\end{proposition}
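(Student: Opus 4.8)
The plan is to exploit the structure theory already developed in the excerpt, together with the explicit Ricci curvature formula in Lemma~\ref{lem_formulaRicci}, to show that a hypothetical $G$-invariant Einstein metric on $G/H = \Sl_2(\RR)\times(G_1/H)$ would have to be a Riemannian product, and then invoke the classical fact that $\Sl_2(\RR)$ carries no left-invariant Einstein metric. First I would set up a reductive decomposition adapted to the splitting: write $\ggo = \slg_2(\RR)\oplus\ggo_1$ and $\ggo_1 = \hg\oplus\mg_1$ with $\mg_1 = \mg_0\oplus\mg_1'$, where $\mg_0$ is the fixed subspace of $\Ad(H)$ acting on $\mg_1$ and $\mg_1'$ the sum of the nontrivial modules; then $\ggo = \hg\oplus\mg$ with $\mg = \slg_2(\RR)\oplus\mg_0\oplus\mg_1'$. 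The key structural observation to record first is that $[\mg_0,\mg_0]\subseteq\hg$: this follows because $N_{G_1}(H)/H$ abelian forces $\mg_0\oplus\hg$ to be a subalgebra (it is $\mathfrak{n}_{\ggo_1}(\hg)$) with abelian quotient, so $[\mg_0,\mg_0]\subseteq\hg$. An arbitrary $G$-invariant metric is an $\Ad(H)$-invariant inner product $\ip$ on $\mg$; by Schur and inequivalence of modules across the three blocks $\slg_2(\RR)$, $\mg_0$, $\mg_1'$ are mutually orthogonal only up to whatever equivalences exist, so I would carefully note which cross-terms can occur (equivalences between a summand of $\mg_1'$ and a factor of $\slg_2(\RR)$, or within $\mg_0$) and treat them as part of the metric data.

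Next I would compute $\Ricci(X,Y)$ for $X\in\slg_2(\RR)$ and $Y\in\mg_0$. Both satisfy $[\hg,X]=[\hg,Y]=0$ — for $X\in\slg_2(\RR)$ trivially since $\hg\subseteq\ggo_1$, and for $Y\in\mg_0$ by definition — so Lemma~\ref{lem_formulaRicci} applies and gives
\[
\Ricci(X,Y) = \unc\sum_{i,j}\langle[X_i,X_j]_\mg,X\rangle\langle[X_i,X_j]_\mg,Y\rangle - \tr S(\ad_\mg X)S(\ad_\mg Y).
\]
Here $G$ is unimodular since it is a product of semisimple groups by semisimple-by-(center of $N_{G_1}(H)$)... more precisely $G = \Sl_2(\RR)\times G_1$ is semisimple hence unimodular, so the hypothesis of the lemma holds. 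Because $[\slg_2(\RR),\ggo_1]=0$, one has $\ad_\mg X$ preserves $\slg_2(\RR)$ and kills $\ggo_1$-directions, while $\ad_\mg Y$ preserves $\ggo_1$-directions and kills $\slg_2(\RR)$; hence $S(\ad_\mg X)S(\ad_\mg Y) = 0$ as an endomorphism (their images land in complementary, invariant subspaces), so the trace term vanishes. For the bracket term, $[X_i,X_j]$ with $X_i\in\slg_2(\RR)$, $X_j\in\ggo_1$ vanishes; with both in $\slg_2(\RR)$ it lies in $\slg_2(\RR)$, contributing to the $X$-slot but orthogonal to $Y$ unless the metric mixes $\slg_2(\RR)$ and $\mg_0$; with both in $\ggo_1$, the bracket lies in $\ggo_1$ and its $\mg$-component involves $\mg_0$ only through $[\mg_0,\mg_1']$ and $[\mg_1',\mg_1']$, since $[\mg_0,\mg_0]\subseteq\hg$ contributes nothing to $\mg$. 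Assembling these, $\Ricci(X,Y)$ reduces to a sum of products of structure constants that can only be nonzero through the ``mixing'' components of the metric; if the Einstein condition holds, orthogonality of the $\Sl_2(\RR)$-orbit and its complement is automatically $\Ricci$-preserved, and I would show this, combined with the positivity/sign analysis (mirroring the argument at the end of the proof of Theorem~\ref{thm_lemadimn}, where one sign estimate forces a norm to vanish), forces all mixing terms to vanish — i.e.\ the metric is block-diagonal with respect to $\slg_2(\RR)\oplus\ggo_1$. Once the metric is block-diagonal and $[\slg_2(\RR),\ggo_1]=0$, the space splits as a Riemannian product $(\Sl_2(\RR),g_1)\times(G_1/H,g_2)$, and the Einstein condition on the product forces each factor to be Einstein with the same constant; but $c<0$ and no left-invariant metric on $\Sl_2(\RR)$ is Einstein (its Ricci operator always has eigenvalues of mixed sign), a contradiction.

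The main obstacle I anticipate is handling the cross-terms coming from module equivalences: if some irreducible $\Ad(H)$-submodule of $\mg_1'$ is equivalent to one of the $\slg_2(\RR)$-summands (or modules inside $\mg_0$ are equivalent to each other or to pieces of $\slg_2(\RR)$), the inner product $\ip$ need not respect the naive block decomposition, and the Ricci formula acquires genuinely off-diagonal terms whose sign is not obvious. The resolution is the refinement already used for $\Sl_2(\CC)/\U(1)$ in Lemma~\ref{lemmasl2C}: use the normalizer/outer-automorphism freedom to put the metric in a normal form with as few mixing parameters as possible, then exhibit an explicit off-diagonal Ricci entry (between a direction in $\slg_2(\RR)$ and one in $\mg_0$ or $\mg_1'$) that is a nonzero multiple of the offending mixing parameter, forcing it to vanish for an Einstein metric. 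Iterating, all mixing is killed, we are back in the block-diagonal case, and the product argument finishes the proof. A secondary technical point is verifying in each of the three spaces from the list that $N_{G_1}(H)/H$ is indeed abelian (for $\Sl_2(\CC)/\U(1)$ the normalizer of $\U(1)$ in $\Sl_2(\CC)$ has abelian quotient; for $\SU(2,1)/\SU(2)$ the normalizer quotient is a circle; for the torus quotients it is a torus), but this is a routine check rather than a real difficulty.
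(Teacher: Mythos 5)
Your overall strategy is the paper's: record $[\mg_0,\mg_0]\subseteq\hg$, apply Lemma \ref{lem_formulaRicci}, show the Einstein condition forces $\slg_2(\RR)\perp\mg$, and conclude from the non-existence of Einstein left-invariant metrics on $\widetilde{\Sl_2(\RR)}$. Two remarks on the setup: since $\hg\subseteq\ggo_1$, the factor $\slg_2(\RR)$ is itself a trivial $\Ad(H)$-module, so Schur's lemma only permits mixing between $\slg_2(\RR)$ and $\mg_0$; your worry about equivalences with submodules of $\mg_1'$ is vacuous. The genuine gap is that the decisive computation is missing, and the pairing you propose does not produce it. For $X\in\slg_2(\RR)$, $Y\in\mg_0$ the trace term does vanish (though your reason is loose: when the metric mixes $\slg_2(\RR)$ and $\mg_0$, the transpose is taken with respect to the mixed metric, so $S(\ad_\mg X)$ does \emph{not} preserve the naive blocks; the product is zero only because images and kernels line up through the orthogonality $\pg_0\perp\mg_1'$). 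What remains is
$\Ricci(X,Y)=\unc\sum_{i,j}\langle[X_i,X_j]_\pg,X\rangle\langle[X_i,X_j]_\pg,Y\rangle = c\,\langle X,Y\rangle$, i.e.\ a positive \emph{semi}definite bilinear form agreeing with the negative definite form $c\ip$ on $\slg_2(\RR)\times\mg_0$. Off-diagonal entries of such forms can perfectly well agree with opposite signs, so this identity alone does not force $\langle X,Y\rangle=0$; ``mirroring the sign estimate of Theorem \ref{thm_lemadimn}'' is an assertion, not an argument, and it is precisely the nontrivial step.

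The paper closes this by pairing differently. It sets $\lgo:=\mg_0^\perp\cap\pg_0$ (a tilted copy of $\slg_2(\RR)$), takes an orthonormal basis $Y_i=A_i+B_i$ of $\lgo$ with $\{A_i\}$ a Milnor basis of $\slg_2(\RR)$ and $B_i\in\mg_0$, diagonalizes the form $(X,Y)\mapsto\tr S(\ad_\mg X)S(\ad_\mg Y)$ on $\mg_0$, and computes
$\langle \Ricci X_j^0, Y_3\rangle = -\langle B_3,X_j^0\rangle\left(\gamma^2+\tr S(\ad_\mg X_j^0)^2\right)$,
using that the bracket sum collapses to pairs in $\lgo$ (because $[\pg,\mg]\perp\lgo$) and that $[Y_1,Y_2]_\pg=\gamma A_3$. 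Since $Y_3\perp X_j^0$ by construction, the Einstein condition makes the left side zero, and $\gamma\neq0$ then kills $\langle B_3,X_j^0\rangle$; iterating gives $\slg_2(\RR)\perp\mg_0$ and hence the product splitting. To salvage your route you would need to reproduce this (or some other) explicit off-diagonal Ricci entry that is a \emph{definite} multiple of the mixing parameter; the normal-form idea borrowed from Lemma \ref{lemmasl2C} does not by itself supply one here.
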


\begin{proof}
Assume that there exists a $G$-invariant Einstein metric $g$ on $G/H$. Let $\ggo_1 = \hg \oplus \mg$ be an $\Ad(H)$-invariant decomposition, and further decompose $\mg = \mg_0 \oplus \mg_1$, where $\hg \oplus \mg_0 = \Lie(N_{G_1}(H))$. Then $\mg_0 = \{X\in \mg : [\hg,X] = 0 \}$ is the trivial $\Ad(H)$-module, $\mg_1$ is the sum of all non-trivial $\Ad(H)$-modules of $\mg$, and our assumption implies that $[\mg_0,\mg_0]\subseteq \hg$. By setting $\pg_0 = \slg_2(\RR) \oplus \mg_0$, $\pg = \pg_0 \oplus \mg_1$, we have a reductive decomposition for $G/H$ given by $\ggo = \hg \oplus \pg$, and $\pg_0$ corresponds to the trivial module. In particular, $\pg_0 \perp \mg_1$. Setting $\lgo := \mg_0^\perp \subseteq \pg_0$, we obtain the orthogonal decomposition
\[
    \pg = \rlap{$\overbrace{\phantom{\lgo\overset{\perp}\oplus\mg_0}}^{\pg_0}$} \lgo \overset{\perp}\oplus \underbrace{\mg_0\overset{\perp}\oplus\mg_1}_\mg.
\]
Our assumption $[\mg_0,\mg_0]\subseteq \hg$ implies that the following bracketing relations are satisfied:
\begin{align}
    [\hg,\pg_0] &= 0,        &   [\hg,\mg_1] &\subseteq \mg_1,       &   [\lgo,\lgo] &\subseteq [\pg_0,\pg_0] \subseteq \hg\oplus\pg_0,  \label{bracketrelations}\\
   [\lgo,\mg_1]&\subseteq \hg\oplus\mg,  &   [\mg_0,\pg_0]&\subseteq \hg,        &   [\mg,\mg_1]&\subseteq \hg\oplus \mg. \nonumber
\end{align}
Since $[\hg,\pg_0]=0$ we may use Lemma \ref{lem_formulaRicci} to obtain
\[
    \langle \Ricci X, Y  \rangle = \unc \sum_{r,s} \langle [U_r, U_s]_\pg, X\rangle \langle [U_r, U_s]_\pg, Y\rangle - \unm \tr S\left( \ad_\pg X\right) S\left( \ad_\pg Y\right),
\]
where $X,Y \in \pg_0$ and $\{U_r\}$ is any orthonormal basis for $\pg$. Assume from now on that $X\in \mg_0$, $Y\in \lgo$, and that $\{ U_r\}$ is the union of orthonormal basis for $\lgo, \mg_0$ and $\mg_1$. Noticing that by \eqref{bracketrelations} one has that $[\pg,\mg] \perp \lgo$ and that $\ad_\pg X$ only acts nontrivially on $\mg$, the above formula simplifies as
\[
    \langle \Ricci X, Y  \rangle = \unc \sum_{U_r,U_s\in \lgo} \langle [U_r, U_s]_\pg, X\rangle \langle [U_r, U_s]_\pg, Y\rangle - \unm \tr S\left( \ad_\mg X\right) S\left( \ad_\mg Y\right).
\]
Choose an orthonormal basis $\{Y_i \}_{i=1}^3$ for $\lgo$, with $Y_i = A_i + B_i$, $A_i\in \slg_2(\RR)$, $B_i\in \mg_0$, and such that $\{A_i \}$ is a \emph{Milnor basis} for $\slg_2(\RR)$, with brackets
\[
   [A_2,A_3] = \alpha A_1, \qquad [A_3,A_1] = \beta A_2, \qquad  [A_1, A_2] = \gamma A_3, \qquad\alpha,\beta,\gamma \neq 0.
\]
Also, choose an orthonormal basis $\{ X_j^0\}_{j=1}^d$ for $\mg_0$ so that $\tr S(\ad_\mg X_i^0) S(\ad_\mg X_j^0) = 0$ if $i\neq j$ (this is indeed possible since the application $(X,Y) \mapsto \tr S(\ad_\mg X)S(\ad_\mg Y)$ is a symmetric bilinear form on $\mg_0$). Then, a straightforward calculation shows that
\[
    \langle \Ricci X_j^0, Y_3 \rangle = -\langle B_3, X_j^0\rangle \left( \gamma^2 + \tr S(\ad_\mg X_j^0)^2 \right),
\]
and using the Einstein condition we conclude that $\langle B_3, X_j^0\rangle = 0$. Analogously, we obtain that $\langle B_i, X_j^0\rangle = 0,$ for all $i=1,2,3$, $j=1,\ldots,d$, thus $\langle \slg_2(\RR), \mg_0\rangle = 0$. Therefore, $\slg_2(\RR)\perp \mg$, and this implies that the metric is locally a Riemannian product. But this is a contradiction, since $\widetilde{\Sl_2(\RR)}$ does not admit any left-invariant Einstein metric.
\end{proof}

Finally, we study the family $\left(\Sl_2(\RR) \times \Sl_2(\CC)\right)/\Delta_{p,q}\U(1)$. Unfortunately, we are not able to deal with the case where $p = q$. Notice though that this missing case represents just one homogeneous space from the above infinite family.

With respect to the inclusions $\hg_1:= \sog(2) \subseteq \slg_2(\RR), \hg_2 := \ug(1) \subseteq \sug(2) \subseteq \slg_2(\CC)$ we have that
\[
    \hg := \Delta_{p,q} \ug(1) \subseteq \hg_1 \oplus \hg_2 \subseteq \slg_2(\RR) \oplus \slg_2(\CC) =: \ggo.
\]
Given an $\Ad(H)$-invariant inner product on some reductive complement $\mg$, we extend it in the usual way to an $\Ad(H)$-invariant inner product $\ip$ on $\ggo$. By looking at the decomposition of the isotropy representation from Table \ref{tabla} in the case when $p\neq q$,
\[
    \mg = \qg_0^{(1)} \oplus \qg_1^{(2)} \oplus \pg_0^{(1)} \oplus \pg_1^{(2)} \oplus \pg_2^{(2)}, \qquad \qg_1 \simeq \pg_1 \not\simeq \pg_2,
\]
we see that the ideals $\slg_2(\RR), \slg_2(\CC)$ are orthogonal (notice that $\pg_2^{(2)}$, $\qg_1^{(2)} \oplus \pg_0^{(1)} \oplus \pg_1^{(2)}$ correspond to reductive complements for the homogeneous spaces $\Sl_2(\RR)/\SO(2)$, $\Sl_2(\CC)/\U(1)$, respectively, and $\hg \oplus \qg_0 = \hg_1 \oplus \hg_2$). This easily gives that $\ip$ is $\ad(\hg)$-invariant if and only if it is both $\ad(\hg_1)$- and $\ad(\hg_2)$-invariant. Thus, $\qg_0$ acts by skew-symmetric endomorphisms on $\ggo$, and by Lemma \ref{lem_formulaRicci} we have that
\[
    \Ricci(Y,Y) = \unc \sum_{i,j} \langle [X_i,X_j]_\mg, Y \rangle^2 \geq 0, \qquad Y\in \qg_0.
\]

\begin{remark}
It is worth pointing out that when $p\neq q$, all homogeneous metrics in the above homogeneous space can be approximated by \emph{strictly locally homogeneous metrics} (cf.~ \cite{Tric92}), namely, metrics which are locally homogeneous but are not locally isometric to any globally homogeneous manifold. This is simply done by considering irrational slopes approximating the rational slope $p/q$ of the given space. It was proved in \cite{Spiro} (see also \cite{Bhm15}) that strictly locally homogeneous metrics do not have non-positive Ricci curvature.

On the other hand, if $p=q$ there exist homogeneous metrics that cannot be approximated in that way.
\end{remark}

\section{Non-unimodular transitive group}\label{sectionnonuni}

In this section we study Einstein homogeneous spaces $G/K$ of negative scalar curvature with $G$ non-unimodular and $G/K$ as in Theorem \ref{structure}. Following the discussion of Section~ \ref{prelimstruct} we can assume that
\begin{equation}\label{decom}
\ggo=(\ggo_1 + \zg(\ug)) \ltimes \ngo,
\end{equation} where $\ug=\ggo_1 + \zg(\ug)$ is a reductive Lie algebra, $\ggo_1$ is semisimple with no compact ideals, $\kg \subset \ggo_1$ and $\zg(\ug)=\RR H ,$ with $H$ the mean curvature vector (see Corollary \ref{cor_rankone}).

Before starting the proof of our main result in the non-unimodular case, we state two lemmas which yield information about semisimple homogeneous spaces in low dimensions. Their proof follows immediately from Table \ref{tabla}, and the fact that irreducible symmetric spaces of the non-compact type are diffeomorphic to Euclidean spaces.

\begin{lemma}\label{diff5}
Let $G_1/K$ be a simply-connected semisimple homogeneous space of the non-compact type\footnote{see Definition \ref{defsshomogspace}.} with $n = \dim G_1/K  \leq 5.$ Then, either $G_1/K=\Sl_2(\CC)/\U(1)$ or $G_1/K \simeq \RR^n$.
\end{lemma}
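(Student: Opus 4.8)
The plan is to prove Lemma \ref{diff5} by direct inspection of Table \ref{tabla}, combined with the classical fact that irreducible symmetric spaces of the non-compact type are diffeomorphic to Euclidean spaces (via the Cartan decomposition and the exponential map, or equivalently because they are Hadamard manifolds). The statement is essentially a bookkeeping exercise, so the work consists of enumerating the relevant entries and checking each one.

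First I would recall that $G_1/K$ is a simply-connected semisimple homogeneous space of the non-compact type with $\dim G_1/K \le 5$, and $K$ the isotropy (note $\dim K$ could be zero here, unlike in Theorem \ref{thmsemisimple}). By the classification underlying Table \ref{tabla}, such a space is either an irreducible symmetric space of the non-compact type, a Riemannian product of lower-dimensional such spaces, a Lie group (the case $\dim K = 0$), or one of the finitely many non-symmetric, non-product entries appearing in the table in dimensions $3, 4, 5$. The irreducible symmetric spaces of the non-compact type in dimension $\le 5$ are real, complex and quaternionic hyperbolic spaces $\RR H^n$, $\CC H^1 = \RR H^2$, $\CC H^2$; all are diffeomorphic to $\RR^n$, and the same holds for finite products of these, and for simply-connected solvable Lie groups (which are diffeomorphic to Euclidean space). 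So the only candidates for an exception come from the non-symmetric, non-product, non-solvable-Lie-group entries of the table with $\dim \le 5$.

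Next I would go through those entries. In dimension $3$ the only non-symmetric table entry is the Lie group $\Sl_2(\RR)$, which is diffeomorphic to $\RR^3$ (it deformation retracts onto $\SO(2)$ — wait, more carefully: $\widetilde{\Sl_2(\RR)} \cong \RR^3$ as a manifold, and since $G_1/K$ is simply connected here we take the universal cover, so it is $\RR^3$). In dimension $4$ there are no non-symmetric, non-product table entries at all. In dimension $5$ the non-symmetric, non-product entries are: $\Sl_2(\CC)/\U(1)$, $(\Sl_2(\RR)\times\Sl_2(\RR))/\Delta_{p,q}\SO(2)$, and $\SU(2,1)/\SU(2)$. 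The latter two are diffeomorphic to $\RR^5$: indeed $(\Sl_2(\RR)\times\Sl_2(\RR))/\Delta_{p,q}\SO(2)$ fibers over $\Sl_2(\RR)/\SO(2) \cong \RR^2$ with fiber $\Sl_2(\RR) \cong \RR^3$ (after passing to universal covers), hence is $\RR^5$; and $\SU(2,1)/\SU(2)$ fibers over the symmetric space $\SU(2,1)/\U(2) = \CC H^2 \cong \RR^4$ with fiber $\U(2)/\SU(2) \cong S^1$ — so here one must pass to the simply-connected cover, which unwinds the $S^1$ to $\RR$, giving $\RR^5$. The one remaining entry is $\Sl_2(\CC)/\U(1)$, which is precisely the claimed exception; I would simply leave it as such (it genuinely is not diffeomorphic to $\RR^5$, since $\Sl_2(\CC)/\U(1)$ deformation retracts onto the maximal compact orbit $\SU(2)/\U(1) = S^2$).

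The main obstacle — though it is more of a subtlety than a genuine difficulty — is making sure that the diffeomorphism claims for the borderline cases are correctly justified, in particular keeping track of which spaces require passing to the universal cover to absorb a circle factor (as in $\SU(2,1)/\SU(2)$ and in the $3$-dimensional Lie group case). Since Table \ref{tabla} lists representatives that may not themselves be simply connected, one has to be careful that the universal covers are the objects under consideration, and then the $S^1$ fibers become $\RR$ fibers and the diffeomorphism with Euclidean space follows from the long exact homotopy sequence of the fibration together with the fact that the base (a symmetric space of the non-compact type, or a lower-dimensional product) is contractible. Once this is checked, the lemma follows. I expect this to be a short argument, essentially a reference to the table plus the standard contractibility of symmetric spaces of the non-compact type and of simply-connected solvable Lie groups.
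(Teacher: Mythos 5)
Your proof is correct and follows exactly the route the paper takes: the authors dispose of this lemma in one sentence by citing Table \ref{tabla} together with the contractibility of irreducible symmetric spaces of the non-compact type, and your case-by-case inspection (including the care taken with universal covers for $\SU(2,1)/\SU(2)$ and the $\widetilde{\Sl_2(\RR)}$ factors) simply makes that argument explicit.
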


\begin{lemma}\label{diff6}
Let $G_1/K$ be a $6$-dimensional simply-connected semisimple homogeneous space of the non-compact type such that $G_1$ contains $\widetilde{\Sl_2(\RR)}$ as a simple factor. Then, $G_1/K$ is diffeomorphic to~ $\RR^6.$
\end{lemma}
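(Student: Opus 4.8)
\emph{Proof proposal.} The plan is to read the answer directly off the classification, so the work is entirely bookkeeping. By hypothesis and the discussion at the start of Section \ref{sectionnonuni}, $\ggo_1$ is semisimple without compact ideals and decomposes as $\ggo_1 = \slg_2(\RR) \oplus \ggo'$ with $\ggo'$ semisimple of the non-compact type (possibly zero), while $\dim G_1/K = 6$. Since the isotropy subalgebra $\kg$ is contained in a maximal compactly embedded subalgebra $\sog(2)\oplus\kg'$ of $\ggo_1$, a one-line dimension count ($3 + \dim\ggo' - \dim\kg = 6$ together with $\dim\kg \le 1 + \dim\kg'$) shows that $\ggo'$ cannot be zero and that the symmetric space attached to each simple ideal of $\ggo'$ has dimension at most $4$; hence each simple ideal of $\ggo'$ is one of $\slg_2(\RR),\ \slg_2(\CC),\ \sug(2,1),\ \sog(4,1)$, and only finitely many configurations survive.

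First I would dispose of the entries that actually appear in Table \ref{tabla}: inspecting the six-dimensional block, the \emph{only} non-symmetric, non-product space whose transitive group has $\slg_2(\RR)$ as a simple factor is the Lie group $\widetilde{\Sl_2(\RR)}\times\widetilde{\Sl_2(\RR)}$ (the row with trivial isotropy), and this is diffeomorphic to $\RR^3\times\RR^3=\RR^6$; all the other six-dimensional entries ($\Sl_2(\CC)$, $\Spe(1,1)/\Spe(1)\U(1)$, $\SU(2,1)/T^2$) have a \emph{simple} transitive group and so are excluded by the hypothesis. Every remaining six-dimensional semisimple homogeneous space of the non-compact type with an $\slg_2(\RR)$-factor is therefore either symmetric or a Riemannian product of lower-dimensional such spaces. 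In the symmetric case (irreducible or reducible) it is diffeomorphic to a Euclidean space via the Cartan decomposition, and we are done. In the product case $G_1/K \simeq (\widetilde{\Sl_2(\RR)}/K_1)\times(G'/K')$, each factor has dimension at most $5$, so Lemma \ref{diff5} applies to each factor; one only has to observe that the exceptional space $\Sl_2(\CC)/\U(1)$ of that lemma has a simple transitive group and, being five-dimensional, would force the complementary factor to be one-dimensional, which is impossible for a semisimple homogeneous space; hence both factors are diffeomorphic to Euclidean spaces and $G_1/K\simeq\RR^6$.

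The only point requiring a little care is making the case division airtight, i.e.\ ruling out a six-dimensional example with a "diagonally twisted" isotropy inside $\slg_2(\RR)\oplus\ggo'$ that is neither the Lie group, nor a product, nor symmetric. This is exactly what the enumeration underlying Table \ref{tabla} supplies: since $\kg$ lies in the compact subalgebra $\sog(2)\oplus\kg'$ and $\dim\kg' \le 4$ in each admissible case, running through the options ($\ggo' = \slg_2(\RR)$, $\slg_2(\RR)\oplus\slg_2(\RR)$, $\slg_2(\CC)$, $\sug(2,1)$, $\sog(4,1)$) forces $\kg$ to be $\{0\}$, all of $\kg'$, or all of $\sog(2)\oplus\kg'$, which respectively produce the Lie group $\widetilde{\Sl_2(\RR)}\times\widetilde{\Sl_2(\RR)}$, a product with an $\HH^k$ or $\HH^2_\CC$ factor, or a reducible symmetric space — all already treated. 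I expect this enumeration to be the only genuinely laborious step; everything else is immediate. Combining the cases, $G_1/K$ is diffeomorphic to $\RR^6$.
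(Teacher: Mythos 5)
Your proof is correct and takes essentially the same route as the paper, which disposes of both Lemma \ref{diff5} and Lemma \ref{diff6} in one sentence by reading them off Table \ref{tabla} together with the fact that irreducible symmetric spaces of the non-compact type are diffeomorphic to Euclidean spaces. You have merely made explicit the bookkeeping (the dimension count on $\kg\subseteq\sog(2)\oplus\kg'$, the product case via Lemma \ref{diff5}, and the observation that the only relevant non-symmetric, non-product entry is the Lie group $\widetilde{\Sl_2(\RR)}\times\widetilde{\Sl_2(\RR)}$) that the paper leaves implicit.
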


We now focus on the proof of Theorem \ref{mainnonuni}. Given a homogeneous Einstein space $(G/K,g)$ with negative scalar curvature and $G$ non-unimodular, we consider for it the decomposition given in \eqref{decom}. By virtue of Corollary \ref{reductionG1/K}, our goal will be to prove that $G_1/K$ is diffeomorphic to a Euclidean space. Observe that we may assume $\dim G_1/K \leq \dim G/K -3.$ Indeed, we always have $\dim \zg(\ug) =1$ and $\dim \ngo \geq \dim \zg(\ug)$ because the representation $\theta|_{\zg(\ug)} : \zg(\ug) \to \ngo$ is faithful (its kernel must be in the nilradical). If $\dim \ngo = 1$, we know by Theorem \ref{thm_lemadimn} and Remark~ \ref{rmkn1} that $(G/K, g)$ is a Riemannian product, and thus not de Rham irreducible. More generally, for this reason we can also assume that $\theta|_{\ggo_1} \neq 0.$  We now proceed with the proof, considering different cases according to the dimension of $G/K.$

\subsection{$\dim G/K \leq 8$}\label{dimG/Kleq8} $ $

We have that $\dim G_1/K \leq 5.$ By Lemma \ref{diff5}, either $G_1/K$ is diffeomorphic to $\RR^n$ for some $n \leq 5,$ or $G_1/K=\Sl_2(\CC)/\U(1).$ In the latter case, $\theta|_{\ggo_1}=0,$ since $\dim \ngo=2$ and there exists no nontrivial $2$-dimensional representation of the simple Lie algebra $\slg_2(\CC)$. Thus, this is a product case.

\subsection{$\dim G/K=9$} $ $


\subsubsection{$\dim G_1/K=6$}\label{G1/K=6} As $\dim \ngo=2$ we know that $\theta(\ggo_1)\subseteq \End(\RR^2)$ is semisimple, so $\ggo_1$ must have an ideal isomorphic to $\slg_2(\RR).$ We conclude that $G_1/K$ is diffeomorphic to $\RR^6$ by using Lemma \ref{diff6}.

\subsubsection{$\dim G_1/K \leq 5$} By Lemma \ref{diff5}, we only consider the case where $G_1/K=\Sl_2(\CC)/\U(1).$ We have that $\dim \ngo = 3$, and it is easy to see that $\theta|_{\ggo_1}=0$ since there is no subalgebra of $ \slg_3(\RR)$ isomorphic to $\slg_2(\CC)$. Hence this is also a product case.

\subsection{$\dim G/K=10$}

\subsubsection{$\dim G_1/K=7$} We have that $\dim \ngo=2$ and $\theta(\ggo_1)$ is semisimple, that is, $\ggo_1$ has an ideal isomorphic to $\slg_2(\RR)$. The list of all possible homogeneous spaces $G_1/K$ to consider is very long. However, if $G_1/K$ is a product of lower dimensional homogeneous spaces, then it must be a product of some irreducible symmetric spaces of non-compact type and some of the spaces in Table \ref{tabla}. Since its dimension is $7$, it easily follows that there is at most one factor which is non-symmetric, and Proposition \ref{prodRiem} implies that on $G_1/K$ the metric is a product of corresponding invariant metrics on each of the factors. Moreover, since the kernel of $\theta|_{\ggo_1}$ has codimension $3$, $\theta$ must necessarily vanish on some of these factors. This implies at once that the whole space $G/K$ splits as a Riemannian product.

By the preceeding discussion, we may now assume that $G_1/K$ is non-product, i.e.~ it is one of the spaces listed in Table \ref{tabla}. Since $\ggo_1$ has an ideal isomorphic to $\slg_2(\RR)$ there are actually only two possibilities: the simply connected covers of $\Sl_2(\RR)^3/\Delta T^2_{a,b,c}$ and of $\SU(2,1) \times \Sl_2(\RR)/\Delta_{p,q}\U(1)(\SU(2)\times \{e\})$. But both of them are diffeomorphic to $\RR^7$, hence we are done.

\subsubsection{$\dim G_1/K=6$} 


Here $\dim \ngo=3$ and $\theta|_{\ggo_1} \neq 0.$ Since $\theta|_{\ggo_1}$ maps $\ggo_1$ into $\slg_3(\RR)$, for it to be non-trivial it is necessary that $\ggo_1$ contains at least one simple ideal of dimension at most $8$. By Lemma \ref{diff6}, we may further assume that $G_1$ contains no $\widetilde{\Sl_2(\RR)}$ factor. Thus it is clear from Table \ref{tabla} that if $G_1/K$ were a product of lower dimensional homogeneous spaces then each factor would be symmetric space, and the result would follow. 

On the other hand, if $G_1/K$ is non-product, it also follows from Table \ref{tabla} that the only possibilities are
\begin{align*}
\SU(2,1)/T_{max}, \quad \Sl_2(\CC).
\end{align*}
But then we must have $\theta|_{\ggo_1} = 0$, because there exist no nontrivial $3$-dimensional representations of the simple Lie algebras $\sug(2,1)$ or $\slg_2(\CC)$.


\subsubsection{$\dim G_1/K \leq 5$}\label{section514sl2C} By using Lemma \ref{diff5} we have that either $G_1/K \simeq \RR^n,$ for some $n \leq 5,$ or $G_1/K=\Sl_2(\CC)/\U(1).$ We need to analyze the latter case. 
We are reduced to showing that equation \eqref{eqRicU/K} has no solutions for $\Sl_2(\CC)$-invariant metrics on $\Sl_2(\CC)/\U(1)$, for any $\theta: \ggo_1 \to \End(\RR^4)$ that satisfies \eqref{eqmmtheta}. We now use the notation of Section \ref{sectionsl2C}. Let us assume that $\theta(\ggo_1) \neq 0$, since otherwise we are in a product case, and consider an arbitrary $\Ad(\U(1))$-invariant inner product on $\slg_2(\CC)$ written in the form $\ip_h$ given in Lemma \ref{lemmasl2C}, $h\in \Gl_5(\RR)$. An orthonormal basis for the reductive complement $\mg$ is given by $\mathcal{B}_h = \{h^{-1} Y_0, \ldots, h^{-1} X_2 \}$. Up to equivalence of representations, there are two $4$-dimensional real faithful representations of $\slg_2(\CC)$: the tautological representation, and its conjugate, and they are both irreducible. Let us consider the case where $\theta$ is equivalent to the tautological representation (the other case is completely analogous). There exists $h_2 \in \Gl(\ngo)$ such that with respect to the inner product $h_2 \cdot \ip_\ngo$ the matrices of $\theta(Z)$, \ldots, $\theta(X_2)$ have the forms \eqref{matricessl2C} (after identifying a complex number $a+b i$ with a $2\times 2$ real matrix $\minimatrix{a}{b}{-b}{a}$). This is equivalent to saying that the matrices of $(h_2^{-1} \cdot \theta)(Z), \ldots, (h_2^{-1} \cdot \theta)(X_2)$ have such a form with respect to the inner product $\ip_\ngo$. But now an easy computation shows that
\[
    \sum_{Y \in \, \mathcal{B}_h} \left[\left(h_2^{-1} \cdot \theta\right)(Y), \left(h_2^{-1} \cdot \theta\right)(Y)^t \right] = 0,
\]
that is, $h_2^{-1}\cdot\, \theta$ is also a zero of the moment map for the natural $\Gl(\ngo)$-action on $\End(\slg_2(\CC),\End(\ngo))$ (see Remark \ref{remarks}, \eqref{remarktheta}). From the rigidity imposed by Geometric Invariant Theory for such zeros \cite[Theorem 4.3]{RS90}, we can conclude that in fact $h_2\in \Or\left(\ngo, \ip_\ngo\right)$, and thus the matrices of $\theta(Z), \ldots, \theta(X_2)$ have the form \eqref{matricessl2C} with respecto to $\ip_\ngo$ (see also the proof of \cite[Proposition A.1]{semialglow} for a more detailed application of this argument). We now plug this information into equation \eqref{eqRicU/K}, and since $\theta(X_2)$ is skew-symmetric, by looking at the Ricci curvature and using Lemma \ref{lemmasl2C} we obtain that $d=0$. In other words, $h$ is diagonal, and in particular the metric associated to $\ip_h$ leaves orthogonal the Cartan decomposition $\slg_2(\CC) = \kg \oplus \pg$, $\kg = \hg \oplus \qg_1$, $\pg = \pg_0 \oplus \pg_1$. Notice that the operator $C_\theta \in \End(\mg)$ given by
\[
    \left\langle C_\theta X, Y\right\rangle = \tr S \left( \theta(X)\right) S\left(\theta(Y)\right)
\]
is a positive multiple of the identity on $\pg$. Hence, by following the same arguments used in the proof of \cite[Theorem 1]{Nkn2} we can conclude that the equation
\begin{equation}\label{eqRictheta}
    \Ricci_{\ip} = c \, I  + C_\theta
\end{equation}
can not be satisfied for $\ip_h$. Therefore, there are no Einstein metrics in this case.

\begin{remark}
The fact that the proof of \cite[Theorem 1]{Nkn2} could be adapted for the more general equation \eqref{eqRicU/K} as long as $G_1$ is simple was kindly communicated to us by Jorge Lauret \cite{Lauretpersonalcom}.
\end{remark}



\section{Strong Alekseevskii's conjecture}\label{strong}

This section is devoted to studying the strong Alekseevskii conjecture and showing that it holds up to dimension $8$, with the possible exceptions of invariant metrics on non-compact semisimple Lie groups or on the space $\left(\Sl_2(\RR)\times \Sl_2(\CC)\right)/\Delta\U(1)$.

As in the previous section, we consider Einstein homogeneous spaces $G/K$ with $G$ chosen as in Theorem~ \ref{structure}, which are not Riemannian products. We also assume the simply-connected hypothesis, which is non-restrictive since it suffices to prove the strong Alekseevskii conjecture in the simply-connected case (see Remark \ref{remarks} (e) and \cite{AC99, Jab15}). Regarding the semisimple case, according to Theorem \ref{thmsemisimple} all semisimple homogeneous spaces in Table \ref{tabla} are either Lie groups, or the space $\left(\Sl_2(\RR)\times \Sl_2(\CC)\right)/\Delta\U(1)$, or they do not admit an Einstein metric. The remaining semisimple homogeneous spaces are symmetric spaces, and it is well-known that they are isometric to solvmanifolds. Therefore, by \cite{Dtt88}, in the following we will only study the cases where the transitive group is non-unimodular (see Remark \ref{remarks}, \eqref{rmkDotti}). We proceed case by case, according to the dimension of $G/K.$ Our goal will be to show that $G_1/K$ is isometric a solvmanifold.

\subsection{$\dim G/K=6$}\label{strong6} $ $

These spaces were analyzed by Jablonski and Petersen in \cite[\S 4]{JblPtr}.

\subsection{$\dim G/K=7$} $ $

First we state the following lemma, which follows easily from Table \ref{tabla} and the well-known fact that irreducible symmetric spaces of the non-compact type are isometric to solvmanifolds.

\begin{lemma}\label{solv5}
Let $(G_1/K, g)$ be a simply-connected semisimple homogeneous space of the non-compact type with a $G_1$-invariant metric $g$, and $\dim G_1/K \leq 5.$ Then, either $(G_1/K,g)$ is isometric to a solvmanifold, or $G_1/K$ is one of the following spaces
\begin{gather*}
    \widetilde{\Sl_2(\RR)}, \quad \Sl_2(\CC)/\U(1),\quad \left(\Sl_2(\RR)\times \Sl_2(\RR)\right)/\Delta_{p,q} \SO(2), \\
     \SU(2,1)/\SU(2), \quad \Sl_2(\RR) \times \Sl_2(\RR)/ \SO(2).
\end{gather*}
\end{lemma}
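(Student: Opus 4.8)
The plan is to prove Lemma \ref{solv5} by simply running through all simply-connected semisimple homogeneous spaces of the non-compact type with $\dim G_1/K \le 5$, and checking each one against the dichotomy ``isometric to a solvmanifold, or in the exceptional list''. The classification of such spaces is exactly what Table \ref{tabla} provides (together with the irreducible symmetric spaces of non-compact type, which are listed e.g.\ in \cite[p.~200]{Bss}, and the product cases). So the proof is really an organized bookkeeping argument rather than a new computation.

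First I would dispose of the irreducible symmetric spaces: by the well-known Iwasawa decomposition argument, every irreducible symmetric space of the non-compact type is isometric to a simply-connected solvmanifold (its solvable Iwasawa factor $AN$ acts simply transitively), so these contribute nothing to the exceptional list. Next, I would go through the non-symmetric, non-product spaces in Table \ref{tabla} of dimension $\le 5$: in dimension $3$ the only entry is $\Sl_2(\RR)$ itself (whose universal cover is $\widetilde{\Sl_2(\RR)}$), and in dimension $5$ the entries are $\Sl_2(\CC)/\U(1)$, $(\Sl_2(\RR)\times\Sl_2(\RR))/\Delta_{p,q}\SO(2)$, and $\SU(2,1)/\SU(2)$. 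Each of these appears explicitly in the claimed exceptional list, so for those there is nothing to prove — they are simply placed in the exceptional bucket. Finally I would handle the product spaces $G_1/K = (G_1'/K_1')\times (G_1''/K_1'')$ of total dimension $\le 5$ with both factors semisimple of the non-compact type and at least one factor nontrivial: writing each factor's dimension, the only way to get a non-solvmanifold factor is if one factor is itself one of the exceptional $3$- or lower-dimensional spaces, i.e.\ $\widetilde{\Sl_2(\RR)}$; since $3+3 > 5$ is impossible but $3+2$ is, the relevant product is $\widetilde{\Sl_2(\RR)}\times(\Sl_2(\RR)/\SO(2))$, which is the last entry in the exceptional list. (Note $\Sl_2(\RR)/\SO(2)$ is the hyperbolic plane, a symmetric space and hence a solvmanifold, but the product with $\widetilde{\Sl_2(\RR)}$ still fails to be a solvmanifold because of the $\widetilde{\Sl_2(\RR)}$ factor.) If instead all factors of a product are symmetric or are solvmanifolds then the product is a solvmanifold and we are in the first alternative.

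The one genuine subtlety — and the step I expect to need the most care — is the invariance of the statement under the choice of $G_1$-invariant metric $g$. For most of the entries this is irrelevant (the space either is never a solvmanifold, like $\widetilde{\Sl_2(\RR)}$ which admits no left-invariant Einstein metric and more generally is not diffeomorphic to $\RR^n$ since it is not contractible-via-$AN$; or it is symmetric and hence the single symmetric metric is a solvmanifold metric). The only place where different invariant metrics could a priori behave differently is a space that is diffeomorphic to $\RR^n$ but carries many invariant metrics — however $\widetilde{\Sl_2(\RR)}$, $\Sl_2(\CC)/\U(1)$, $(\Sl_2(\RR)\times\Sl_2(\RR))/\Delta_{p,q}\SO(2)$, $\SU(2,1)/\SU(2)$ are all non-contractible or otherwise not diffeomorphic to Euclidean space, so \emph{no} $G_1$-invariant metric on them can be a simply-connected solvmanifold metric. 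Thus the exceptional list is genuinely metric-independent, and the lemma follows. I would therefore phrase the proof as: "By Table \ref{tabla} and the list of irreducible symmetric spaces of non-compact type in \cite[p.~200]{Bss}, every simply-connected semisimple homogeneous space $G_1/K$ of non-compact type with $\dim G_1/K\le 5$ is either (a) a product of solvmanifolds, hence a solvmanifold; or (b) one of the five listed spaces; and since none of the latter is diffeomorphic to a Euclidean space, no invariant metric on them is a solvmanifold metric." This is short and the only thing requiring attention is making sure the enumeration of product cases of dimension $\le 5$ is exhaustive.
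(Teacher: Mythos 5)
Your proof is correct and follows essentially the same route as the paper, which simply observes that the lemma ``follows easily from Table \ref{tabla} and the well-known fact that irreducible symmetric spaces of the non-compact type are isometric to solvmanifolds'' (for any invariant metric, since the Iwasawa factor $AN$ acts simply transitively whenever the isotropy is a maximal compact subgroup, and likewise for products of such spaces). The one caveat is your closing claim that none of the five listed spaces is diffeomorphic to a Euclidean space — this is false for the universal covers of $\SU(2,1)/\SU(2)$ and $\left(\Sl_2(\RR)\times\Sl_2(\RR)\right)/\Delta_{p,q}\SO(2)$, which are diffeomorphic to $\RR^5$ — but this does not harm the argument, because the lemma's disjunction is inclusive and all you actually need is that every space \emph{not} on the list carries only solvmanifold metrics.
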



By using Theorem \ref{thm_lemadimn}, Remark \ref{rmkn1} and Corollary \ref{cor_rankone}, we can assume that $\dim G_1/K \leq 4,$ $\dim\zg(\ug)=1$ and $\dim \ngo \geq 2.$ We divide into cases according to the dimension of $G_1/K.$

\subsubsection{$\dim G_1/K=4$} By Lemma \ref{solv5} we know that $G_1/K$ is a solvmanifold.

\subsubsection{$\dim G_1/K=3$}\label{G/K7G_1/K3} The only case in which $G_1/K$ is not a solvmanifold is when $G_1/K=\widetilde{\Sl_2(\RR)}.$ 
The existence of an example in this case would imply that there is a $6$-dimensional unimodular expanding algebraic soliton, by using that non-unimodular Einstein spaces are one-dimensional extensions of unimodular algebraic solitons (see \cite[\S 6]{alek}). Since for that soliton one would have $\ug=\slg_2(\RR)$, we arrive at a contradiction by using \cite[Appendix]{semialglow}.

\subsection{$\dim G/K=8$} $ $

We assume that $\dim G_1/K \leq 5$ and $\dim \ngo \geq 2.$

\subsubsection{$\dim G_1/K=5$} We have that $\dim \ngo=2$. Then, by using Lemma \ref{solv5}, we consider the following possibilities to $G_1/K:$
\[
\Sl_2(\CC)/\U(1), \quad \SU(2,1)/\SU(2), \quad \left(\Sl_2(\RR)\times \Sl_2(\RR)\right)/\Delta_{p,q}\SO(2), \quad \Sl_2(\RR) \times \Sl_2(\RR)/ \SO(2).
\]
In the first two cases, we must have $\theta|_{\ggo_1} = 0$ since there exist no nontrivial $2$-dimensional representations of the simple Lie algebras $\slg_2(\CC)$ and $\sug(2,1).$ By Theorem \ref{thm_lemadimn} and Remark \ref{rmkn1}, these are product cases. In the latter case, we also are in a product case. Indeed, the metric restricted to $G_1/K$ is a product metric. In addition, since $\dim \ngo=2,$ $\theta$ must necessarily vanish on some of these factors. This implies at once that the whole space $G/K$ splits as a Riemannian product. We now deal with the case $G_1/K =\left(\Sl_2(\RR)\times \Sl_2(\RR)\right)/\Delta_{p,q}\SO(2)$.

This case is similar in nature to the one in Section \ref{section514sl2C}: we are reduced to solving equation~ \eqref{eqRicU/K} for invariant metrics on $G_1/K$, for every possible representation $\theta: \ggo_1 = \slg_2(\RR)\oplus \slg_2(\RR) \to \End(\RR^2)$. Let $H = \minimatrix{1}{0}{0}{-1}$, $X = \minimatrix{0}{1}{-1}{0}$, $Y = \minimatrix{0}{1}{1}{0}$ be a basis for $\slg_2(\RR)$, and consider the ordered basis $\mathcal{B}$ for $\slg_2(\RR) \oplus \slg_2(\RR)$ given by
\begin{align*}
    Z = \left(p \,H, q\, H\right), \quad X_0 &=  \left(q \,H, -p\, H\right), \quad Y_1 = \left( X, 0\right), \\
    Y_2 = \left( Y,0\right), \quad X_1 &= \left( 0, X\right), \quad X_2 = \left( 0, Y\right).
\end{align*}
The isotropy subalgebra is $\hg_{p,q} = \RR Z$, $\mg = \operatorname{span_\RR}\{X_0, Y_1, Y_2, X_1, X_2 \}$ is a reductive complement, and the decomposition into irreducible submodules is given by $\mg = \qg_0 \oplus \pg_1 \oplus \pg_2$, where $\qg_0 = \RR X_0$, $\pg_1 = \RR Y_1 \oplus \RR Y_2$, $\pg_2 = \RR X_1 \oplus \RR X_2$, and $\pg_1 \simeq \pg_2$ if and only if $p=q$.

First notice that $\theta$ must have a kernel, which we may assume without loss of generality to be the first $\slg_2(\RR)$ factor. This implies that $\theta(H,0) = 0$, and since $\theta(Z)$ is skew-symmetric, it is clear that $\theta(X_0)$ must also be skew-symmetric. Thus, using equation \eqref{eqRicU/K} we obtain that
\[
    \Ricci_{G_1/K}(X_0,X_0) < 0.
\]
This is already enough to rule out the cases $p\neq q$, since in those cases we have $\pg_1 \not\simeq \pg_2$, which forces $X_0$ to act skew-symmetrically on $\ggo_1$, and by Lemma \ref{lem_formulaRicci} we get a contradiction.

Let us now consider the remaining case $p = q = 1$, which is considerably more difficult. The following is the analogous of Lemma \ref{lemmasl2C} for this situation, and can be proved in the very same way.

\begin{lemma}
Up to isometry, $\Sl_2(\RR)\times \Sl_2(\RR)$-invariant metrics on $\left(\Sl_2(\RR)\times \Sl_2(\RR)\right) / \Delta_{1,1} \SO(2)$ can be parameterized by $\Ad(\SO(2))$-invariant inner products on $\mg$ of the form
\[
    \langle \cdot, \cdot \rangle_h = \langle h\, \cdot\, , h\, \cdot \rangle_0,
\]
where $h\in \Gl_5(\RR)$ is given by
\[
    h = \left[\begin{array}{ccccc} e &0 &0 &0 & 0\\ 0& a &0 &0 &0 \\0 & 0& a & 0&0 \\0 &d & 0& b &0 \\ 0&0 &d &0 & b \end{array}\right], \qquad a,b,d,e \in \RR, \quad a,b,e\neq 0.
\]
\end{lemma}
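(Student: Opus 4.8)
The plan is to follow the proof of Lemma~\ref{lemmasl2C} almost verbatim; the only change is the precise shape of the off-diagonal block of $h$, which is dictated by the way the two equivalent isotropy summands $\pg_1\simeq\pg_2$ are aligned inside $\mg$ in the case $p=q=1$.

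First I would describe the $\Ad(\SO(2))$-invariant inner products on $\mg$. Fix the background inner product $\langle\cdot,\cdot\rangle_0$ making $\{X_0,Y_1,Y_2,X_1,X_2\}$ orthonormal. Every $\Ad(\SO(2))$-invariant inner product can be written as $\langle h\,\cdot\,,h\,\cdot\,\rangle_0$ for some $h\in\Gl_5(\RR)$, and it determines $h$ only up to left multiplication by $\Or(\mg,\langle\cdot,\cdot\rangle_0)$; equivalently, only the Gram operator $h^t h$ matters, and it is $\langle\cdot,\cdot\rangle_0$-symmetric, positive-definite and $\Ad(\SO(2))$-equivariant. Using the isotropy decomposition $\mg=\qg_0\oplus\pg_1\oplus\pg_2$ (valid for $p=q=1$), with $\qg_0=\RR X_0$ trivial and $\pg_1\simeq\pg_2$ irreducible of complex type, Schur's Lemma forces $h^t h$ to preserve $\qg_0$ and $\pg_1\oplus\pg_2$, to act by positive scalars on each of $\qg_0,\pg_1,\pg_2$, and to couple $\pg_1$ with $\pg_2$ through an equivariant map, which — after identifying $\pg_1$ and $\pg_2$ with $\CC$ via the complex structure given by $\Ad(\SO(2))$ — is multiplication by a complex scalar $c+d\,\im$. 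Choosing a convenient square root, the invariant inner products are thus parameterized by matrices $h$ whose only nonzero entries are scalars $e$ on $\qg_0$, $a$ on $\pg_1$, $b$ on $\pg_2$, and a $2\times2$ off-diagonal block realizing multiplication by $c+d\,\im$; a priori this is a five-parameter family $(a,b,c,d,e)$.

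Next I would remove one parameter using automorphisms of $\ggo$ that preserve the presentation, exactly as for $\Sl_2(\CC)/\U(1)$. Since $[Z,X_0]=0$, the one-parameter group $T_t:=\Ad(\exp tX_0)\subset\Aut(\ggo)$ fixes the isotropy $\RR Z$ and preserves $\mg$ together with each of $\qg_0,\pg_1,\pg_2$; hence $\langle h\,\cdot\,,h\,\cdot\,\rangle_0$ and $\langle h\,T_t\,\cdot\,,h\,T_t\,\cdot\,\rangle_0$ give isometric invariant metrics on the space. Because $T_t$ commutes with $\Ad(\SO(2))$ it acts on $\pg_1$ and on $\pg_2$ through a one-parameter family of equivariant maps with distinct nonzero weights, and one checks that this induces a non-trivial rotation of the off-diagonal complex scalar $c+d\,\im$ while leaving $a,b,e$ unchanged. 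Choosing $t$ so that this scalar becomes real, and absorbing the residual left $\Or(\mg,\langle\cdot,\cdot\rangle_0)$-factor needed to put $h$ in normal form, I would arrive at an $h$ of exactly the stated shape, with parameters $a,b,d,e$. The converse is immediate: every matrix of that shape commutes with $\Ad(\SO(2))$ and is invertible when $a,b,e\neq0$, so it defines an $\Ad(\SO(2))$-invariant inner product. This yields the asserted parameterization.

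I expect the only mildly delicate point to be the bookkeeping in the second step: one must verify that $T_t$ genuinely moves the off-diagonal entry — i.e.\ that it acts on $\pg_1$ and $\pg_2$ with distinct weights so that the induced rotation is non-trivial — and keep track of the left $\Or$-correction restoring the normal form. This is, however, no harder than the corresponding computation already carried out for $\Sl_2(\CC)/\U(1)$ in the proof of Lemma~\ref{lemmasl2C}.
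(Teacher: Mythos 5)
Your proposal is correct and follows the same route as the paper: the paper proves this lemma by declaring it "analogous to Lemma \ref{lemmasl2C}", whose proof is exactly your two steps — Schur's lemma for the complex-type equivalent modules $\pg_1\simeq\pg_2$ giving a five-parameter family with a complex off-diagonal coupling, followed by the one-parameter group of automorphisms $\Ad(\exp tX_0)$ fixing the isotropy to rotate that coupling into the stated normal form. Your added bookkeeping (Gram operator, left $\Or$-ambiguity, distinctness of the weights of $X_0$ on $\pg_1$ and $\pg_2$) is consistent with, and slightly more explicit than, what the paper records.
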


Reasoning as in Section \ref{section514sl2C}, we see that condition \eqref{eqmmtheta} implies that $\theta$ restricted to the second $\slg_2(\RR)$ factor is nothing but the tautological representation of this Lie algebra. Let us consider the operator $\Ricci^\theta \in \End(\mg)$ given by
\[
    \left\langle \Ricci^\theta  \,  X,   Y\right\rangle_h = \Ricci_{G_1/K}(X,Y) - \tr S(\theta(X)) S(\theta(Y)).
\]
Equation \eqref{eqRicU/K} for a metric $\ip_h$ can be rephrased as
\begin{equation}\label{eqRiccithetaop}
    \Ricci^\theta = c I, \qquad c<0.
\end{equation}
Since we now know $\theta$ and $\ip_h$ explicitly, we can actually compute $\Ricci^\theta$ in terms of $a,b,d,e$. Let us call $r_{i,j}^\theta$, $1\leq i,j \leq 5$, the entries of the matrix of $\Ricci^\theta$ with respect to the $\ip_h$-orthonormal ordered basis $\mathcal{B}_h = \left\{ h^{-1} Y \mid Y\in \mathcal{B} \right\}$ . Then, assuming that $\det h = 1$, we have that
\begin{align*}
    r_{1,1}^\theta &=  \unm \left( a^4 e^4 + \left(b^2 - d^2\right)^2  \right) + a^2 d^2 \left( e^2- 4b^2\right) \left( e^2 + 4 b^2\right), \\
    r_{1,1}^\theta + 2\,  r_{4,4}^\theta + 2\, a \cdot d^{-1} \, r_{2,4}^\theta & = \unm \left(a^2 - b^2 + d^2 \right)^2 e^4 + 4 a^4 b^2 \left( 4 b^2 - e^2  \right).
\end{align*}
Despite the ugliness of these formulas, we see that all the terms and factors on the right hand side are positive except for $ e^2 - 4 b^2 $, which appears with a different sign in both of them. For a solution of \eqref{eqRiccithetaop}, both expressions should be negative (they would equal $c$ and $3\, c$, respectively). It is now clear that such a solution does not exist.


\subsubsection{$\dim G_1/K=4$} By Lemma \ref{solv5} we know that $G_1/K$ is isometric to a solvmanifold.

\subsubsection{$\dim G_1/K=3$} Here, the only case to consider is $G_1/K=\widetilde{\Sl_2(\RR)}.$ 
Similarly to the case in Section~ \ref{G/K7G_1/K3}, we have a contradiction.

\bibliography{aleklow}
\bibliographystyle{amsalpha}

%

\end{document}